\newcommand{\numberset}{\mathbb}
\newcommand{\N}{\numberset{N}}
\newcommand{\R}{\numberset{R}}
\newcommand{\Pk}{\numberset{P}}
\renewcommand{\epsilon}{\varepsilon}
\renewcommand{\theta}{\vartheta}
\renewcommand{\rho}{\varrho}
\renewcommand{\phi}{\varphi}
\newcommand{\xx}{\boldsymbol{x}}
\newcommand{\vv}{\boldsymbol{v}}
\newcommand{\nn}{\boldsymbol{n}}
\newcommand{\aaa}{\boldsymbol{\alpha}}
\newcommand{\bb}{\boldsymbol{\beta}}
\newcommand{\gr}{\nabla}
\newcommand{\eek}{\mathcal{E}^{E,k}}
\newcommand{\pek}{\mathcal{P}^{E,k}}
\newcommand{\ek}{\mathcal{E}^{k}}
\newcommand{\pk}{\mathcal{P}^{k}}
\newcommand{\nablah}{\nabla_{\!h}}
\def\PN{{\Pi^{\nabla, E}_{k}}}
\def\P0{{\Pi^{0, E}_{k}}}
\def\Pg{{\Pi^{0}_{k}}}
\def\PZ0P{{\boldsymbol{\Pi}^{0, E}_{k-1}}}
\def\PP0P{{\boldsymbol{\Pi}^{0, E}_{k}}}
\def\Stab{\mathcal{S}^E}
\def\AcipE{\mathcal{A}_{\rm cip}^E}
\def\Acip{\mathcal{A}_{\rm cip}}
\def\Atcip{\widetilde{\mathcal{A}}_{\rm cip}}
\def\vint{v_{\mathcal{I}}}
\def\uint{u_{\mathcal{I}}}
\def\eint{e_{\mathcal{I}}}
\def\cip{{\rm cip}}
\def\cipdual{{\rm cip^*}}
\def\errF{\eta_{\mathcal{F}}}
\def\errBt{\eta_{\tilde{\mathcal{B}}}}
\def\erra{\eta_{a}}
\def\pwh{  h\pi ( \bb_h \cdot \nablah \Pg v_h )}
\def\wh{  h \bb_h \cdot \nabla \Pg v_h}
\def\errb{\eta_{b}}
\def\errc{\eta_{c}}
\def\errJ{\eta_{J}}
\def\errN{\eta_{\mathcal N}}
\def\reg{k}
\lbrace\begin{array}{@{}l@{}}}%
\theoremstyle{definition}
\theoremstyle{remark}
\newtheorem{remark}{Remark}[section]
\theoremstyle{remark}
\theoremstyle{plain}
\newtheorem{theorem}{Theorem}[section]
\newtheorem{proposition}{Proposition}[section]
\newtheorem{lemma}{Lemma}[section]
\title{\textbf{A Nonconforming Virtual Element Method for Advection-Diffusion-Reaction Problems with CIP Stabilization}}
\date{\today}
\author[1,4]{C. Lovadina \thanks{carlo.lovadina@unimi.it}}
\author[2]{I. Perugia \thanks{ilaria.perugia@univie.ac.at}}
\author[3]{M. Trezzi \thanks{manuelluigi.trezzi01@universitadipavia.it}}
\affil[1]{Dipartimento di Matematica ``F. Enriques'',
Universit\`a degli Studi di Milano,
Via Cesare Saldini 50 - 20133 Milano, Italy}
\affil[2]{Faculty of Mathematics,
University of Vienna,
Oskar-Morgenstern-Platz 1 - 1090 Vienna, Austria}
\affil[3]{Dipartimento di Matematica ``F. Casorati'',
Universit\`a degli Studi di Pavia,
Via Adolfo Ferrata 5 - 27100 Pavia, Italy}   
\affil[4]{IMATI-CNR, Via Adolfo Ferrata 5 - 27100 Pavia, Italy}
\begin{document}

\maketitle

\begin{abstract}
We study a nonconforming virtual element method (VEM) for advection-diffusion-reaction problems with continuous interior penalty (CIP) stabilization. The design of the method is based on a standard variational formulation of the problem (no skew-symmetrization), and boundary conditions are imposed with a Nitsche technique. 
We use the enhanced version of VEM, with a ``dofi-dofi''  stabilization in the diffusion and reaction terms. 
We prove stability of the proposed method and derive $h$-version error estimates.
\end{abstract}

\section{Introduction}

The Virtual Element Method (VEM) is a paradigm for the discretization of problems in partial differential equations \cite{volley, hitchhikers}, that is experiencing significant success in a quite large range of applications; 
see for instance \cite{sema-simai} and \cite{ACTA-VEM} and the references therein.
In this paper, we consider the VEM to approximate the solution to diffusion-reaction-advection scalar problems. As it is well-known, these are
elliptic problems which, however, displays severe numerical difficulties when the diffusive term, carrying ellipticity, becomes ``small'' with respect to the others.
In particular, standard Galerkin schemes, such as the basic version of the Finite Element Method (FEM), typically exhibit unphysical 
oscillations of the discrete solution in the advection-dominated regime, unless extremely fine meshes are employed. 
In addition, it is worth noting that a similar situation occurs in more complex fluid-dynamics problems, e.g. the Oseen and the Navier-Stokes equation. 
Thus, the scalar diffusion-reaction-advection problem may represent also a useful playground for the development of numerical methods for the above-mentioned applications.
Many stabilizing techniques have already been developed in the framework of the FEM. 
To mention only a few of them, we recall upwind Discontinuous Galerkin schemes \cite{DE-book,R-book,BrezziDG}, Streamline Upwind Petrov-Galerkin (SUPG) and variants \cite{brooks1982}, Continuous Interior Penalty (CIP) \cite{douglas, burman:2004, burman:2007}, and Local Projection Stabilization (LPS)\cite{LPS0,matthies2007unified}.
All these approaches aim to offer a quasi-robust method with respect to the diffusion parameter.
From the theoretical viewpoint, a method is quasi-robust if, assuming sufficiently regular solutions and data, it is possible to derive error estimates that are uniform in the diffusion parameter, with respect to a norm that gives a suitable control on the convective term.
Nowadays, the last above-mentioned three methodologies have their VEM counterparts: for instance, see \cite{berrone:2016,BERRONE2018,BDLV:2021} for SUPG,  \cite{li2021} for LPS and \cite{BLT:2024} for CIP (regarding other polygonal technologies, one may refer to \cite{HHO-book-1,HHO-book-2}). 

This contribution aims to develop, analyse, and numerically test CIP methods using \emph{nonconforming} VEMs, see \cite{AML-2016}. Hence, we expand upon
the results of \cite{BLT:2024}, which focuses on the $H^1$-conforming case instead. 

A reason for our interest in the nonconforming setting is that the design, implementation and analysis of nonconforming VEMs are independent of the spatial dimension.
Furthermore, we highlight that nonconforming methods are used to avoid locking phenomena in the simulation of incompressible fluids (e.g. for FEM: the Crouzeix-Raviart element). 
In this regard, as previously noted,
our present contribution can be viewed
as a first step towards the design of quasi-robust methods for incompressible
fluid problems.

Our nonconforming methods exhibit the following distinctive
features compared to~\cite{BLT:2024}, which also entail deviations within the stability and error analysis guidelines developed there.
\begin{itemize}
\item We discretize the standard variational formulation of the problem: we do not need to skew-symmetrize the discrete advection form.   
\item We use
Nitsche's method to impose Dirichlet boundary conditions as in \cite{BLT:2024}, but we propose a symmetric version of it, which is robust in the vanishing advection limit. 
\end{itemize}

Indeed, in such a situation, only the symmetric version of the Nitsche method, which lead to an adjoint consistent discrete scheme, allows to maintain optimal $L^2$-error estimates for regular problems. We also remark that, to the best of our knowledge, for nonconforming VEMs no analysis of the Nitsche method has been developed yet, not even for the standard Laplace equation.

\medskip

The paper is organized as follows. After presenting the continuous and discrete problems in Section \ref{s:VEM}, we develop 
stability and convergence analysis of the CIP-stabilizad nonconforming VEM in Section \ref{s:theory}. Finally, numerical tests are shown in Section \ref{s:numerical}.
Throughout the paper, we use standard notation for Sobolev norms and semi-norms. 

\section{The Continuous and the Discrete Problem}\label{s:VEM}

\subsection{Continuous Problem}\label{sec:continuous_problem}

Given a polytopal domain $\Omega \subset \R^d$, $d=2,3$, with boundary $\Gamma$, we consider the steady advection-diffusion-reaction equation with non-homogeneous Dirichlet boundary condition
\begin{equation}\label{eq:strong}
\left\{
\begin{aligned} 
- \epsilon \Delta u + \bb \cdot \nabla u + \sigma u &= f \quad \text{in }\Omega \, , \\
u &= g \quad \text{on } \Gamma \, .
\end{aligned}
\right.
\end{equation}
For simplicity, we assume that the diffusion and the reaction coefficient $\epsilon$ and $\sigma$, respectively, are two positive constants, while the advection coefficient $\bb \in [W^{1,\infty}(\Omega)]^d$ is such that $\text{div}(\bb) = 0$.
Finally, $f\in L^2(\Omega)$ and $g \in H^{\frac{1}{2}}(\Gamma)$. 
We introduce the Sobolev space
\[
V_g \coloneqq \{ v \in H^1(\Omega) \, \text{ s.t. } v|_\Gamma = g \} \, ,
\]
where the restriction to the boundary is intended in the sense of the trace operator.
We assume that the boundary $\Gamma$ is split into two disjoint parts
\begin{equation*}
\Gamma_{\text{in}} \coloneqq \{ \xx \in \Gamma \, | \, (\bb(\xx) \cdot \nn) < 0 \}
\quad \text{and} \quad 
\Gamma_{\text{out}} \coloneqq \{ \xx \in \Gamma \, | \, (\bb(\xx) \cdot \nn) \geq 0 \} \, ,
\end{equation*}
where $\nn$ is the outward unit normal vector to the boundary~$\Gamma$.  
Integrating by parts the first term in~\eqref{eq:strong}, we derive a weak formulation for problem~\eqref{eq:strong}:
\begin{equation}\label{eq:weak-2}
\left\{
\begin{aligned}
&\text{find }u \in V_g \text{ such that:} \\
&\epsilon \, a(u,v) + b(u,v) + \sigma \, c(u,v) = \mathcal{F}(v) \quad \forall v \in V \coloneqq H_0^1(\Omega) \, .
\end{aligned}
\right .
\end{equation}
The forms
$a(\cdot,  \cdot) \colon V_g \times V \to \R$ , 
$b(\cdot,  \cdot) \colon V_g \times V \to \R$
and
$c(\cdot,  \cdot) \colon V_g \times V \to \R$
are defined as
\begin{equation}
\label{eq:a-c}
a(u,  v) \coloneqq \int_{\Omega} \nabla u \cdot \nabla v \, {\rm d}\Omega
\qquad \text{for all $u \in V_g \, , v \in V$,}
\end{equation}
\begin{equation}
\label{eq:b-c}
b(u,  v) := \int_{\Omega} (\bb \cdot \nabla u) \, v \, {\rm d}\Omega 
\qquad \text{for all $u \in V_g \, , v \in V$,}
\end{equation}
\begin{equation}
\label{eq:c-c}
c(u,  v) := \int_{\Omega}  u \, v \, {\rm d}\Omega
\qquad \text{for all $u \in V_g \, , v \in V$.}
\end{equation} 
Finally, the linear operator $\mathcal{F} (\cdot) \colon V \to \R$ is defined as
\begin{equation}\label{eq:f-c}
\mathcal{F}(v) \coloneqq \int_\Omega f \, v \, {\rm d}\Omega 
\, .
\end{equation}
It is well know that standard Galerkin schemes for discretizing~\eqref{eq:weak-2} could produce unsatisfactory solutions. 
In particular, if the mesh size is not sufficiently small, the numerical solutions may exhibit spurious oscillations.
One possibility to overcome this difficulty is to insert in the weak formulation~\eqref{eq:weak-2} a term that penalizes the jump of the gradient of the functions.
This idea was originally proposed in \cite{douglas} and we will explain the details in the following sections. 
From now on, we assume that the parameters in equation~\eqref{eq:strong} are scaled so that 
\begin{equation}\label{eq:beta-scaling}
\| \bb \|_{[L^\infty(\Omega)]^d} = 1 \, . 
\end{equation}
Since we are mainly interested in the advection-dominated case, we also assume that the diffusion coefficient $\epsilon$ is such that $\epsilon \ll 1$.
In the estimates in the rest of the paper, we keep track of the dependence on $\epsilon$ and~$h$. Thus, we will use the notation $\lesssim\cdot$ for $\le C\,\cdot\,$, with hidden constant $C>0$ independent of~$h$ and~$\epsilon$.

\subsection{Notation and Polynomial Approximation Results} \label{ss:preliminary}

We consider a sequence $\{\Omega_h\}_h$ of tessellations of the domain~$\Omega$ into non-overlapping polytopes. 
From now on, let~$E$ denote an (open) element of the mesh~$\Omega_h$, and~$e$ facet (edge if $d=2$, face if $d=3$) of the element $E$. Moreover, let $|E|$, $h_E$, and $\nn^E$ denote the volume, the diameter, and the unit outward normal of the element~$E$, respectively. We also write $\nn^e$ for the restriction of $\nn^E$ to the facet $e\subset \partial E$.
We define the mesh size as $h \coloneqq \sup_{E\in\Omega_h}h_E$. 

The set of the facets of a tessellation $\Omega_h$ is denoted with $\mathcal{E}_h$.
This set is split in internal facets and boundary facets $\mathcal{E}_h = \mathcal{E}_h^0 \cup \mathcal{E}_h^\partial$. For any scalar-valued function $v$, whose restriction $v^E$ to each element $E\in\Omega_h$ is sufficiently smooth, we define the (vector-valued) jump $[\![v]\!]$ and the (scalar-valued) average $\{v\}$ on each facet $e\in \mathcal{E}_h^0$, $e\subset\partial E\cap \partial E'$ as
\begin{equation}\label{eq:jmp-avrg}
[\![v]\!] \coloneqq v^E \nn^E + v^{E'} \nn^{E'},\qquad
\{v\} \coloneqq \dfrac{v^{E} + v^{E'}}{2}\,.
\end{equation}
Similarly, for any piecewise smooth vector-valued function $\vv$, we define the (scalar-valued) jump~$[\![\vv]\!]$ on each facet $e\in \mathcal{E}_h^0$, $e\subset\partial E\cap \partial E'$ as
\begin{equation}\label{eq:vect-jmp}
[\![\vv]\!] \coloneqq \vv^E\cdot \nn^E + \vv^{E'}\cdot \nn^{E'}.
\end{equation}
We also introduce the notation $\nablah$ for the elementwise gradient operator.

We suppose that $\set{\Omega_h}_h$ fulfils the following assumption.
\medskip

\noindent
\textbf{(A1) Mesh assumption.}
There exists a positive constant $\rho$ such that, for any $E \in \set{\Omega_h}_h$,   
\begin{itemize}
\item $E$ is star-shaped with respect to a ball $B_E$ of radius $ \geq\, \rho \, h_E$;
\item any facet $e$ of $E$ has diameter $h_e$ $ \geq\, \rho \, h_E$;
\item for $d=3$, every face $e$ is star-shaped with respect to a ball $B_e$ of radius $ \geq\, \rho \, h_e$;
\item the mesh is quasi-uniform, i.e., any element has diameter $h_E \geq \rho h$.
\end{itemize} 

We introduce some basic spaces that are useful below. Given two positive integers $n$ and $m$, and $p\in [0,+\infty]$, for any $E \in \Omega_h$ we define
\begin{itemize}
\item $\Pk_n(E)$: the set of polynomials on $E$ of degree $\leq n$  (with $\Pk_{-1}(E)=\{ 0 \}$),
\item $\Pk_n(e)$: the set of polynomials on $e$ of degree $\leq n$  (with $\Pk_{-1}(e)=\{ 0 \}$),
\item $\Pk_n(\Omega_h) := \{q \in L^2(\Omega) \quad \text{s.t} \quad q|_E \in  \Pk_n(E) \quad \text{for all $E \in \Omega_h$}\}$,
\item $W^{m,p}(\Omega_h) := \{v \in L^2(\Omega) \quad \text{s.t} \quad v|_E \in  W^{m,p}(E) \quad \text{for all $E \in \Omega_h$}\}$ equipped with the broken norm and seminorm
\[
\begin{aligned}
&\|v\|^p_{W^{m,p}(\Omega_h)} := \sum_{E \in \Omega_h} \|v\|^p_{W^{m,p}(E)}\,,
\qquad 
&|v|^p_{W^{m,p}(\Omega_h)} := \sum_{E \in \Omega_h} |v|^p_{W^{m,p}(E)}\,,
\qquad & \text{if $1 \leq p < \infty$,}
\\
&\|v\|_{W^{m,p}(\Omega_h)} := \max_{E \in \Omega_h} \|v\|_{W^{m,p}(E)}\,,
\qquad 
&|v|_{W^{m,p}(\Omega_h)} := \max_{E \in \Omega_h} |v|_{W^{m,p}(E)}\,,
\qquad & \text{if $p = \infty$,}
\end{aligned}
\]
\end{itemize}
We introduce the following polynomial projections: 
\begin{itemize}
\item the $\boldsymbol{L^2}$\textbf{-projection} $\Pi_n^{0, E} \colon L^2(E) \to \Pk_n(E)$, given by
\begin{equation*}
\label{eq:P0_k^E}
\int_Eq_n (v - \, {\Pi}_{n}^{0, E}  v) \, {\rm d} E = 0 \qquad  \text{for all $v \in L^2(E)$  and $q_n \in \Pk_n(E)$,} 
\end{equation*} 
with obvious extensions for functions defined on a facet $\Pi_n^{0, e} \colon L^2(e) \to \Pk_n(e)$, and for vector-valued functions $\boldsymbol{\Pi}^{0, E}_{n} \colon [L^2(E)]^2 \to [\Pk_n(E)]^2$ and $\boldsymbol{\Pi}^{0, e}_{n} \colon [L^2(e)]^2 \to [\Pk_n(e)]^2$;

\item the $\boldsymbol{H^1}$\textbf{-seminorm projection} ${\Pi}_{n}^{\nabla,E} \colon H^1(E) \to \Pk_n(E)$, defined by 
\begin{equation*}
\label{eq:Pn_k^E}
\left\{
\begin{aligned}
& \int_E \gr  \,q_n \cdot \gr ( v - \, {\Pi}_{n}^{\nabla,E}   v)\, {\rm d} E = 0 \quad  \text{for all $v \in H^1(E)$ and  $q_n \in \Pk_n(E)$,} \\
& \int_{\partial E}(v - \,  {\Pi}_{n}^{\nabla, E}  v) \, {\rm d}s= 0 \, .
\end{aligned}
\right.
\end{equation*}
\end{itemize}
The global counterparts of these operators, $\Pi_n^{0}$ and $\Pi_n^{\nabla}$, are defined, respectively, as
\begin{equation*}
\label{eq:proj-global}
(\Pi_n^{0} v)|_E = \Pi_n^{0,E} v \,,
\qquad
(\Pi_n^{\nabla} v)|_E = \Pi_n^{\nabla,E} v 
\qquad
\text{for all $E \in \Omega_h$\,.}
\end{equation*}
We recall a classical result for polynomials on star-shaped domains (see, for instance,~\cite{brenner-scott:book}).

\begin{lemma}[Polynomial approximation]
\label{lm:bramble}
Under assumption \textbf{(A1)}, for any $E \in \Omega_h$ and for any sufficiently smooth function $\phi$ defined on $E$, we have that 
\[
\begin{aligned}
&\|\phi - \Pi^{0,E}_n \phi\|_{W^{m,p}(E)} \lesssim h_E^{s-m} |\phi|_{W^{s,p}(E)} 
\qquad & \text{$s,m \in \N$, $m \leq s \leq n+1$, $p=1, \dots, \infty$,}
\\
&\|\phi - \Pi^{\nabla,E}_n \phi\|_{m,E} \lesssim h_E^{s-m} |\phi|_{s,E} 
\qquad & \text{$s,m \in \N$, $m \leq s \leq n+1$, $s \geq 1$,}
\\
&\|\nabla \phi - \boldsymbol{\Pi}^{0,E}_{n} \nabla \phi\|_{m,E} \lesssim h_E^{s-1-m} |\phi|_{s,E} 
\qquad & \text{$s,m \in \N$, $m+1 \leq s \leq n+2$. 
}
\end{aligned}
\]
\end{lemma}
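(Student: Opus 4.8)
The three inequalities are classical consequences of the Bramble--Hilbert lemma combined with a scaling argument and the approximation/stability properties of the projections; the plan is to prove the first bound in detail, to adapt the same machinery to the second, and to reduce the third to the first. Throughout, the role of assumption \textbf{(A1)} is to make all hidden constants depend only on $\rho$, the polynomial degree, and $p$, and in particular to be uniform in $h_E$: star-shapedness with respect to a ball of radius bounded below by a fixed multiple of $h_E$ lets one scale $E$ to a reference configuration of unit diameter and transport the estimates back, tracking the powers of $h_E$ through the homogeneity of the seminorms.

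For the first estimate, I would start from the averaged Taylor polynomial $q \in \Pk_n(E)$ furnished by the Bramble--Hilbert (Deny--Lions) theory, which satisfies $\|\phi - q\|_{W^{m,p}(E)} \lesssim h_E^{s-m}|\phi|_{W^{s,p}(E)}$ for all $0 \le m \le s \le n+1$. Since $\Pi^{0,E}_n$ reproduces polynomials of degree $\le n$, writing $\phi - \Pi^{0,E}_n\phi = (\phi - q) - \Pi^{0,E}_n(\phi - q)$ and using the triangle inequality reduces matters to bounding $\|\Pi^{0,E}_n(\phi - q)\|_{W^{m,p}(E)}$. On this polynomial I would invoke an inverse inequality to pass from the $W^{m,p}(E)$-norm to the $L^p(E)$-norm at the price of a factor $h_E^{-m}$, and then the $L^p(E)$-stability of the $L^2$-projection to bound $\|\Pi^{0,E}_n(\phi - q)\|_{L^p(E)} \lesssim \|\phi - q\|_{L^p(E)} \lesssim h_E^{s}|\phi|_{W^{s,p}(E)}$. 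Collecting the two contributions yields the claimed $h_E^{s-m}$ rate.

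The second estimate follows the same template, exploiting the defining relations of $\Pi^{\nabla,E}_n$. The first relation shows that $\nabla \Pi^{\nabla,E}_n\phi$ is exactly the $L^2$-projection of $\nabla\phi$ onto $\nabla\Pk_n(E)$, so $\Pi^{\nabla,E}_n$ is optimal in the $H^1$-seminorm; hence $|\phi - \Pi^{\nabla,E}_n\phi|_{1,E} \le |\phi - q|_{1,E} \lesssim h_E^{s-1}|\phi|_{s,E}$ directly by Bramble--Hilbert. The case $m=0$ uses the second relation $\int_{\partial E}(\phi - \Pi^{\nabla,E}_n\phi)\,{\rm d}s = 0$, which fixes the additive constant and, through a Poincar\'e--Friedrichs inequality on the star-shaped element, controls the $L^2(E)$-norm by the $H^1$-seminorm already estimated. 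For $m \ge 2$ one again splits off the Taylor polynomial and absorbs the polynomial remainder with an inverse inequality reducing $|\cdot|_{m,E}$ to $|\cdot|_{1,E}$. Finally, the third estimate is obtained by applying the first (with $p=2$) componentwise to the vector field $\nabla\phi \in [H^{s-1}(E)]^d$, since this gives $\|\nabla\phi - \boldsymbol{\Pi}^{0,E}_n \nabla\phi\|_{m,E} \lesssim h_E^{(s-1)-m}|\nabla\phi|_{s-1,E} = h_E^{s-1-m}|\phi|_{s,E}$, and the index range $m+1 \le s \le n+2$ is precisely $m \le s-1 \le n+1$ rewritten for $\nabla\phi$.

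The step I expect to be most delicate is the $L^p(E)$-stability of the $L^2$-projection for $p \ne 2$: unlike the Hilbertian case $p=2$, where the projection is an orthogonal contraction, $L^p$-boundedness with a constant independent of the element geometry is not automatic and must be established by scaling to the reference configuration, where it holds because all norms on the finite-dimensional space $\Pk_n$ are equivalent, with \textbf{(A1)} guaranteeing that the scaling is uniformly non-degenerate. The remaining work is essentially bookkeeping: tracking the exact powers of $h_E$ through each inverse inequality and each rescaling, and checking the admissible index ranges stated in the lemma.
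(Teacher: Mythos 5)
The paper does not prove this lemma: it is stated as a classical result and delegated to the cited reference (Brenner--Scott), so there is no in-paper argument to compare against. Your sketch correctly reconstructs the standard proof from that reference --- averaged Taylor polynomial plus Bramble--Hilbert, $L^p$-stability of the polynomial $L^2$-projection via scaling and finite-dimensional norm equivalence, the $H^1$-optimality and boundary-mean-zero Poincar\'e inequality for $\Pi^{\nabla,E}_n$, and the componentwise reduction of the third bound to the first --- and you rightly single out the $L^p$-stability for $p\neq 2$ as the only step requiring genuine care.
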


We also have a counterpart of this result for projections defined on a facet.
\begin{lemma}\label{lm:bh-edge}
Let $e \in \mathcal{E}_h^0$ be an internal facet and let $E, E' \in \Omega_h$ be such that $e \subset \partial E \cap \partial E'$. Moreover, let $\Pi^{0,e}_n$ denote the $L^2(e)$-orthogonal projection onto $\Pk_{n}(e)$. Then, for every sufficiently smooth function $\varphi$ defined on $E \cup E'$, we have that
\begin{equation}\label{eq:edge_est}
\| \varphi - \Pi^{0,e}_n \varphi \|_{0,e} 
\lesssim
h_E^{m-\frac{1}{2}} \bigl( | \varphi |_{m, E } + | \varphi |_{m,  E'}\bigr) \qquad m,n\in\N, \, 1 \leq m \leq n+1 \, .
\end{equation}

\end{lemma}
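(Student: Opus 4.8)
The plan is to exploit that $\Pi^{0,e}_n$ is the $L^2(e)$-orthogonal projection, hence the best $L^2(e)$-approximation to $\varphi$ in $\Pk_n(e)$. Since the trace on $e$ of the bulk projection $\Pi^{0,E}_n\varphi$ is itself a polynomial of degree at most $n$ on $e$, I may estimate
\[
\| \varphi - \Pi^{0,e}_n \varphi \|_{0,e} \le \| \varphi - (\Pi^{0,E}_n \varphi)|_e \|_{0,e}.
\]
This is the crucial reduction: it trades the facet projection, whose approximation properties are not directly at hand, for the restriction to $e$ of an object fully controlled by Lemma~\ref{lm:bramble}. Here the trace of $\varphi$ on $e$ is understood from the side of $E$; since $\varphi$ is assumed sufficiently regular, this is well defined.

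Next I would invoke a scaled trace inequality on the polytope $E$: for every $w \in H^1(E)$,
\[
\| w \|_{0,e}^2 \lesssim h_E^{-1} \| w \|_{0,E}^2 + h_E\, | w |_{1,E}^2,
\]
with hidden constant depending only on the shape-regularity parameter $\rho$ of \textbf{(A1)}. Applying this with $w = \varphi - \Pi^{0,E}_n\varphi$ and inserting the bulk estimates of Lemma~\ref{lm:bramble} with $p=2$ and smoothness index $m$, namely $\| \varphi - \Pi^{0,E}_n\varphi \|_{0,E} \lesssim h_E^{m} | \varphi |_{m,E}$ (target $L^2$) and $| \varphi - \Pi^{0,E}_n\varphi |_{1,E} \lesssim h_E^{m-1} | \varphi |_{m,E}$ (target $H^1$-seminorm), both valid for $1 \le m \le n+1$, the two contributions scale identically:
\[
\| \varphi - \Pi^{0,E}_n\varphi \|_{0,e}^2 \lesssim h_E^{-1}\,h_E^{2m}\,|\varphi|_{m,E}^2 + h_E\,h_E^{2m-2}\,|\varphi|_{m,E}^2 = h_E^{2m-1}\,|\varphi|_{m,E}^2.
\]
Taking the square root gives $\| \varphi - \Pi^{0,e}_n\varphi \|_{0,e} \lesssim h_E^{m-\frac{1}{2}}|\varphi|_{m,E}$, which already implies \eqref{eq:edge_est}, the right-hand side being dominated by $h_E^{m-\frac{1}{2}}(|\varphi|_{m,E}+|\varphi|_{m,E'})$.

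The only genuinely delicate point is the scaled trace inequality on a \emph{general} polytopal element: unlike on simplices, one cannot simply map $E$ to a fixed reference cell. I would instead rely on the star-shapedness of $E$ with respect to a ball of radius $\gtrsim \rho\, h_E$, combined with the facet condition $h_e \ge \rho\, h_E$ (and, for $d=3$, the star-shapedness of the face $e$), to obtain the inequality with a constant independent of $h$; this is precisely where \textbf{(A1)} enters. The symmetric appearance of $|\varphi|_{m,E'}$ in the statement is immaterial: running the identical argument from the $E'$ side yields the companion bound $h_E^{m-\frac{1}{2}}|\varphi|_{m,E'}$ (using $h_{E'} \simeq h_E$ by quasi-uniformity), so either element may be chosen and the stated symmetric form holds \emph{a fortiori}.
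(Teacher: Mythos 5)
Your proof is correct and follows essentially the same route as the paper's: reduce the facet projection error to the trace of the bulk projection error via best approximation in $L^2(e)$, apply the scaled trace inequality on $E$, invoke Lemma~\ref{lm:bramble}, and conclude by symmetry in $E$ and $E'$. Your additional remarks on why the trace inequality holds with a uniform constant on star-shaped polytopes under \textbf{(A1)} are a welcome clarification that the paper leaves implicit.
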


\begin{proof}
Considering the element $E$, for $m,n\in\N$, with $1 \leq m \leq n+1$, we have 
\begin{equation*}
\begin{aligned}
\| \varphi - \Pi^{0,e}_n \varphi \|_{0,e} 
\leq\| \varphi - \Pi^{0,E}_n \varphi \|_{0,e}
&\lesssim \bigl(  h_E^{-1/2} \|\varphi - \Pi^{0,E}_n \varphi  \|_{0,E} + h_E^{1/2}|\varphi - \Pi^{0,E}_n \varphi  |_{1,E}\bigr)\\
&\lesssim h_E^{m-\frac{1}{2}} | \varphi |_{m, E } .
\end{aligned}
\end{equation*}
Since the same estimate applies also to the element $E'$, bound \eqref{eq:edge_est} follows.  
\end{proof}

\subsection{Virtual Element Spaces}\label{ss:vemspaces}

Given an element $E \in \Omega_h$ with $n_E$ facets, and a positive integer $k$, we define the space
\begin{equation}\label{eq:local-space}
\begin{aligned}
V^{k,nc}_h(E) \coloneqq
\bigl\{
v_h \in &H^1(E) \quad \text{s.t.} \quad 
\nabla v_h\cdot \nn^E
\in \Pk_{k-1}(e) \quad \text{for all $e \subset \partial E$,} 
\bigr .
\\
\bigl .
& \Delta v_h \in \Pk_k(E) \,, \quad
(v_h - \PN v_h, \, \widehat{p}_k ) = 0 \quad 
\text{for all $\widehat{p}_k \in \Pk_k(E) / \Pk_{k-2}(E)$}
\bigr \} \,.
\end{aligned}
\end{equation}
Contrary to the classical virtual element space presented in \cite{volley}, the space $V^{k,nc}_h(E)$ contains functions that are solutions of a PDE with Neumann boundary conditions. 
The space \eqref{eq:local-space} is the enhanced version of the space originally proposed in \cite{AML-2016}; more details can be found in \cite{CMS-2018}.
The following set of linear operators represents a set of degrees of freedom (DoFs) for the space $V^{k,nc}_h(E)$:
\begin{itemize}
    \item $\eek$: the moments up to the order $k-1$ on each facet $e\subset\partial E$:
    \begin{equation}\label{eq:dofsEdges}
    \mu_{E,e}^{{\boldsymbol \ell}}(v_h) \coloneqq \dfrac{1}{|e|} \int_e v_h \left( \dfrac{{\bf s} - {\bf s}_e}{h_e} \right)^{{\boldsymbol \ell}} {\rm d}s \, , \quad 
    |{\boldsymbol \ell}|\le k-1,
    \end{equation}
    where ${\bf s}$ is expressed in the local~$d-1$ coordinates on~$e$, ${\bf s}_e$ is the barycenter of $e$, and~${\boldsymbol \ell}\in\N^{d-1}$ is a multi-index with $d-1$ components; 
    \item $\pek$: the moments up to the order $k-2$ on $E$:
    \begin{equation}\label{eq:dofsInt}
    \mu^{\aaa}_E(v_h) \coloneqq \dfrac{1}{|E|} \int_E v_h \left( \dfrac{\xx - \xx_E}{h_E} \right)^{\aaa} {\rm d}E \, , \quad |\aaa| \leq k-2, 
    \end{equation}
    where $\xx_E$ is the barycenter of $E$, and $\aaa\in\N^d$ is a multi-index with $d$ components.
\end{itemize} 
In particular, for $k=1$, the basis functions for the space $V_h^{k,nc}(E)$ are associated only with the facet DoFs; for $k\geq 2$, there are also the interior DoFs. In Figure \ref{fig:dofs}, we show the DoFs on a penthagon for $k=1,2,3$.

\begin{figure}
\begin{minipage}{0.31\textwidth}
\centering
\begin{tikzpicture}[scale = 0.9]
\coordinate (A) at (90:1.8);
\coordinate (B) at (162:1.8);
\coordinate (C) at (234:1.8);
\coordinate (D) at (306:1.8);
\coordinate (E) at (378:1.8);

\path (A) +(216:1.058) coordinate (A1);
\path (B) +(288:1.058) coordinate (B1);
\path (C) +(0:1.058) coordinate (C1);
\path (D) +(72:1.058) coordinate (D1);
\path (E) +(144:1.058) coordinate (E1);

\coordinate (01) at (0,0);

\draw (A) -- (B) -- (C) -- (D) -- (E) -- (A);

\draw[red, fill] (A1) circle (3pt);
\draw[red, fill] (B1) circle (3pt);
\draw[red, fill] (C1) circle (3pt);
\draw[red, fill] (D1) circle (3pt);
\draw[red, fill] (E1) circle (3pt);
\end{tikzpicture}
\begin{center}
$k=1$
\end{center}
\end{minipage}
\hfill
\begin{minipage}{0.31\textwidth}
\centering
\begin{tikzpicture}[scale = 0.9]
\coordinate (A) at (90:1.8);
\coordinate (B) at (162:1.8);
\coordinate (C) at (234:1.8);
\coordinate (D) at (306:1.8);
\coordinate (E) at (378:1.8);

\path (A) +(216:0.705) coordinate (A1);
\path (B) +(288:0.705) coordinate (B1);
\path (C) +(0:0.705) coordinate (C1);
\path (D) +(72:0.705) coordinate (D1);
\path (E) +(144:0.705) coordinate (E1);

\path (A1) +(216:0.705) coordinate (A2);
\path (B1) +(288:0.705) coordinate (B2);
\path (C1) +(0:0.705) coordinate (C2);
\path (D1) +(72:0.705) coordinate (D2);
\path (E1) +(144:0.705) coordinate (E2);

\coordinate (O1) at (90:0.0);

\draw (A) -- (B) -- (C) -- (D) -- (E) -- (A);

\draw[red, fill] (A1) circle (3pt);
\draw[red, fill] (B1) circle (3pt);
\draw[red, fill] (C1) circle (3pt);
\draw[red, fill] (D1) circle (3pt);
\draw[red, fill] (E1) circle (3pt);

\draw[red, fill] (A2) circle (3pt);
\draw[red, fill] (B2) circle (3pt);
\draw[red, fill] (C2) circle (3pt);
\draw[red, fill] (D2) circle (3pt);
\draw[red, fill] (E2) circle (3pt);

\draw[blue, fill] (O1) circle (3pt);
\end{tikzpicture}
\begin{center}
$k=2$
\end{center}
\end{minipage}
\hfill
\begin{minipage}{0.31\textwidth}
\centering
\begin{tikzpicture}[scale = 0.9]
\coordinate (A) at (90:1.8);
\coordinate (B) at (162:1.8);
\coordinate (C) at (234:1.8);
\coordinate (D) at (306:1.8);
\coordinate (E) at (378:1.8);

\path (A) +(216:0.54) coordinate (A1);
\path (B) +(288:0.54) coordinate (B1);
\path (C) +(0:0.54) coordinate (C1);
\path (D) +(72:0.54) coordinate (D1);
\path (E) +(144:0.54) coordinate (E1);

\path (A1) +(216:0.54) coordinate (A2);
\path (B1) +(288:0.54) coordinate (B2);
\path (C1) +(0:0.54) coordinate (C2);
\path (D1) +(72:0.54) coordinate (D2);
\path (E1) +(144:0.54) coordinate (E2);

\path (A2) +(216:0.54) coordinate (A3);
\path (B2) +(288:0.54) coordinate (B3);
\path (C2) +(0:0.54) coordinate (C3);
\path (D2) +(72:0.54) coordinate (D3);
\path (E2) +(144:0.54) coordinate (E3);

\coordinate (O1) at (90:0.3);
\coordinate (O2) at (210:0.3);
\coordinate (O3) at (330:0.3);

\draw (A) -- (B) -- (C) -- (D) -- (E) -- (A);

\draw[red, fill] (A1) circle (3pt);
\draw[red, fill] (B1) circle (3pt);
\draw[red, fill] (C1) circle (3pt);
\draw[red, fill] (D1) circle (3pt);
\draw[red, fill] (E1) circle (3pt);

\draw[red, fill] (A2) circle (3pt);
\draw[red, fill] (B2) circle (3pt);
\draw[red, fill] (C2) circle (3pt);
\draw[red, fill] (D2) circle (3pt);
\draw[red, fill] (E2) circle (3pt);
\draw[red, fill] (A3) circle (3pt);
\draw[red, fill] (B3) circle (3pt);
\draw[red, fill] (C3) circle (3pt);
\draw[red, fill] (D3) circle (3pt);
\draw[red, fill] (E3) circle (3pt);

\draw[blue, fill] (O1) circle (3pt);
\draw[blue, fill] (O2) circle (3pt);
\draw[blue, fill] (O3) circle (3pt);
\end{tikzpicture}
\begin{center}
$k=3$
\end{center}
\end{minipage}
\caption{Degrees of freedom for a penthagon.}\label{fig:dofs}
\end{figure}
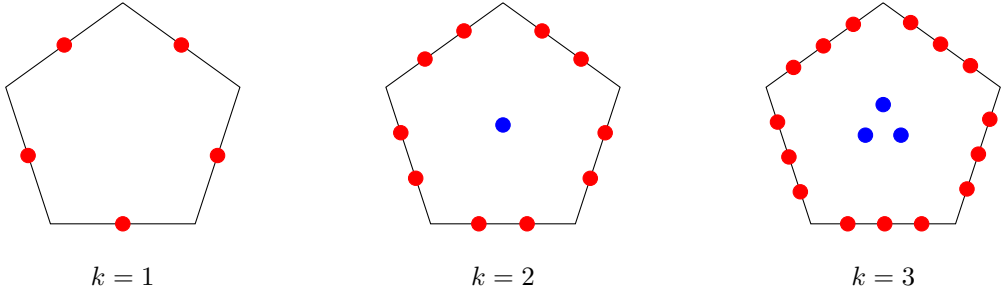

The dimension $N_E$ of the space $V_h^{k,nc}(E)$ is 
\[
N_E = \begin{cases}
k \, n_E + \dfrac{(k-1)(k-2)}{2} \qquad&\text{if $d=2$}\, ,\\[0.3cm]
\dfrac{(k+1)k}{2}n_E+\dfrac{(k+1)k(k-1)}{6}\qquad&\text{if $d=3$}\, .
\end{cases}
\]

\begin{remark}\label{rem:proj}
For any $v_h\in V_h^{k,nc}(E)$, the projections $\Pi_k^{0, E}v_h$, $\PN v_h$, and  $\PZ0P\nabla v_h$
are computable in terms of the DoFs of $v_h$; see for instance \cite{AML-2016,CMS-2018}.
\end{remark}

For every mesh $\Omega_h$, we introduce the global virtual element space as
\begin{equation}
\begin{aligned}
V^{k,nc}_h(\Omega_h) \coloneqq
\bigl\{
v_h \in W^{1,2}(\Omega_h) \quad \text{s.t.} \quad &
v_h|_E \in V_h^{k,nc}(E) \quad \forall E \in \Omega_h\, , \\ 
&\int_e [\![v_h]\!]\cdot\nn^e\, 
q \, {\rm d}s= 0 \quad  \forall q \in \mathbb P_{k-1}(e) \quad \forall e \in {\mathcal E}_h^0
\bigr\} \, .
\end{aligned}
\end{equation}
We remark that we do not impose full continuity across the element boundaries. 
On each interior facet, we only require that the moments up to order $k-1$ are preserved.
Therefore, $V_h^{k,nc}(\Omega_h) \not\subset H^1(\Omega)$.
The global DoFs are as follows:
\begin{itemize}
    \item $\ek$: the moments up to the order $k-1$ on each $e \in \mathcal E$ :
    \[
    \mu_{e}^{\boldsymbol \ell}(v_h) \coloneqq \dfrac{1}{|e|} \int_e v_h 
    \left( \dfrac{{\bf s} - {\bf s}_e}{h_e} \right)^{{\boldsymbol \ell}} {\rm d}s \, , \quad 
    |{\boldsymbol \ell}|\le k-1, 
    \]
    where ${\boldsymbol \ell}\in\N^{d-1}$ is a multi-index with $d-1$ components;
    \item $\pk$: the moments up to the order $k-2$ on each $E \in \Omega_h$ :
    \[
    \mu^{\aaa}_E(v_h) \coloneqq \dfrac{1}{|E|} \int_E v_h \left( \dfrac{\xx - \xx_E}{h_E} \right)^{\aaa} {\rm d}E \, , \quad |\aaa| \leq k-2, 
    \]
    where $\aaa\in\N^d$ is a multi-index with $d$ components.
\end{itemize}
The dimension of $V_h^{k,nc}(\Omega_h)$ is
\[
N = \begin{cases}
k \, |\mathcal E | + \dfrac{(k-1)(k-2)}{2} \, |\Omega_h| \qquad&\text{if $d=2$}\, ,\\[0.3cm]
\dfrac{(k+1)k}{2}\, |\mathcal E |+\dfrac{(k+1)k(k-1)}{6}\, |\Omega_h|\qquad&\text{if $d=3$}\, ,
\end{cases}
\]
where $|\mathcal E |$ and $|\Omega_h|$ are the number of facets and of elements, respectively.
Letting $\vint$ be the DoFs-interpolant of a sufficiently smooth function $v$ into $V_h^{k,nc}(\Omega_h)$, we recall the following interpolation error estimate, see \cite{AML-2016,CMS-2018}.

\begin{lemma}[Approximation with nonconforming virtual element functions]
\label{lm:interpolation}
Under assumption \textbf{(A1)}, for any $v \in H^1(\Omega) \cap H^{s+1}(\Omega_h)$, 
there exists $ \vint \in V_h(\Omega_h)$, such that for all $E \in \Omega_h$, 
\[
\|v - \vint\|_{0,E} + h_E \|\nabla (v - \vint)\|_{0,E} \lesssim h_E^{s+1} |v|_{s+1,E} \, , 
\]
where $0 < s \le k$.
\end{lemma}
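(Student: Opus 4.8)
The plan is to build $\vint$ elementwise from the degrees of freedom and then compare it with a polynomial approximant of $v$. First I would set $\vint|_E$ to be the unique function in $V_h^{k,nc}(E)$ whose facet moments $\eek$ up to order $k-1$ and whose interior moments $\pek$ up to order $k-2$ coincide with those of $v$. This is well defined because these functionals form a unisolvent set of DoFs for $V_h^{k,nc}(E)$, and since $v\in H^1(E)$ its traces on the facets lie in $L^2(e)$, so all the defining functionals make sense. Because on each interior facet $e\subset\partial E\cap\partial E'$ the moment DoFs are shared by the two adjacent elements, the local pieces match their moments up to order $k-1$ across $e$; hence $\int_e[\![\vint]\!]\cdot\nn^e\,q\,{\rm d}s=0$ for all $q\in\Pk_{k-1}(e)$, and $\vint\in V_h^{k,nc}(\Omega_h)$ is globally admissible.

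Next I would fix, on each $E$, the polynomial $\vpi:=\P0 v\in\Pk_k(E)\subset V_h^{k,nc}(E)$ and split by the triangle inequality
\[
\|v-\vint\|_{0,E}\le\|v-\vpi\|_{0,E}+\|\vpi-\vint\|_{0,E},
\qquad
|v-\vint|_{1,E}\le|v-\vpi|_{1,E}+|\vpi-\vint|_{1,E}.
\]
The polynomial terms are controlled directly by Lemma~\ref{lm:bramble}: for $0<s\le k$ one has $\|v-\vpi\|_{0,E}\lesssim h_E^{s+1}|v|_{s+1,E}$ and $|v-\vpi|_{1,E}\lesssim h_E^{s}|v|_{s+1,E}$.

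The core of the argument is the bound on the virtual remainder $\vpi-\vint\in V_h^{k,nc}(E)$. Here I would invoke the norm--DoFs equivalence valid under assumption \textbf{(A1)} (see \cite{AML-2016,CMS-2018}): denoting by $\boldsymbol{\mathrm{dof}}(w_h)$ the vector collecting the (dimensionless) facet and interior moments of $w_h\in V_h^{k,nc}(E)$, one has $\|w_h\|_{0,E}\lesssim h_E^{d/2}\,\|\boldsymbol{\mathrm{dof}}(w_h)\|_{\ell^2}$ and $|w_h|_{1,E}\lesssim h_E^{d/2-1}\,\|\boldsymbol{\mathrm{dof}}(w_h)\|_{\ell^2}$. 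By construction $\vint$ and $v$ share the same DoFs, so $\boldsymbol{\mathrm{dof}}(\vpi-\vint)=\boldsymbol{\mathrm{dof}}(\vpi-v)$, and it remains to estimate each moment of $v-\vpi$. For an interior moment, $|\mu_E^{\aaa}(v-\vpi)|\lesssim|E|^{-1/2}\|v-\vpi\|_{0,E}\lesssim h_E^{s+1-d/2}|v|_{s+1,E}$ by Lemma~\ref{lm:bramble}; for a facet moment, $|\mu_{E,e}^{\boldsymbol\ell}(v-\vpi)|\lesssim|e|^{-1/2}\|v-\vpi\|_{0,e}$, and combining the scaled trace inequality $\|w\|_{0,e}^2\lesssim h_E^{-1}\|w\|_{0,E}^2+h_E|w|_{1,E}^2$ with Lemma~\ref{lm:bramble} gives $\|v-\vpi\|_{0,e}\lesssim h_E^{s+1/2}|v|_{s+1,E}$, whence again $|\mu_{E,e}^{\boldsymbol\ell}(v-\vpi)|\lesssim h_E^{s+1-d/2}|v|_{s+1,E}$. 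Since the number of DoFs $N_E$ is bounded uniformly under \textbf{(A1)}, summing these contributions yields $\|\boldsymbol{\mathrm{dof}}(\vpi-\vint)\|_{\ell^2}\lesssim h_E^{s+1-d/2}|v|_{s+1,E}$; multiplying by $h_E^{d/2}$ and $h_E^{d/2-1}$ respectively gives $\|\vpi-\vint\|_{0,E}\lesssim h_E^{s+1}|v|_{s+1,E}$ and $|\vpi-\vint|_{1,E}\lesssim h_E^{s}|v|_{s+1,E}$. Substituting back into the two triangle inequalities and combining them after multiplying the gradient estimate by $h_E$ concludes the proof.

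I expect the main obstacle to be the norm--DoFs equivalence itself: because the virtual space is not polynomial, the uniform (in $h_E$ and in the element shape) control of the equivalence constants cannot be obtained by a single reference-element scaling, and must instead rely on the star-shapedness and facet non-degeneracy encoded in \textbf{(A1)}, together with the stability of the enhancement constraint that defines $V_h^{k,nc}(E)$; this is precisely the technical content borrowed from \cite{AML-2016,CMS-2018}. Everything else reduces to standard scaling arguments and the polynomial approximation estimates of Lemma~\ref{lm:bramble}.
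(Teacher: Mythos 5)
Your argument is correct in outline, but it is worth noting that the paper itself does not prove Lemma~\ref{lm:interpolation} at all: it is stated as a known result and attributed to \cite{AML-2016,CMS-2018}. Your route — DoF-matching interpolant, triangle inequality against $\vpi=\P0 v$, and the norm--DoFs equivalence $\|w_h\|_{0,E}\lesssim h_E^{d/2}\|\boldsymbol{\mathrm{dof}}(w_h)\|_{\ell^2}$ — is internally consistent, and you correctly identify the norm--DoFs stability for the \emph{enhanced} space as the one genuinely nontrivial ingredient that must be imported (it is exactly where the enhancement constraint $(w_h-\PN w_h,\widehat p_k)=0$ has to be used, since a pure scaling argument does not apply to non-polynomial shape functions). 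Two remarks for comparison with the standard proofs in the cited references. First, the classical argument for the nonconforming space avoids the DoF-stability bound altogether for the $H^1$-seminorm: integrating by parts, $\int_E\nabla(v-\vint)\cdot\nabla w_h=-\int_E(v-\vint)\Delta w_h+\sum_{e\subset\partial E}\int_e(v-\vint)\,\nabla w_h\cdot\nn^E\,{\rm d}s$, and since $\nabla w_h\cdot\nn^E\in\Pk_{k-1}(e)$ while $\vint$ matches the facet moments of $v$ up to order $k-1$ (and the interior moments up to order $k-2$), the right-hand side vanishes for the non-enhanced space, so $\vint$ is an elliptic projection and $|v-\vint|_{1,E}\le|v-p|_{1,E}$ for any $p\in\Pk_k(E)$; the enhanced space spoils this orthogonality because $\Delta w_h\in\Pk_k(E)$ rather than $\Pk_{k-2}(E)$, which is precisely the extra difficulty handled in \cite{CMS-2018}. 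Second, in your DoF estimates the interior moments of $v-\vpi$ actually vanish identically (since $\vpi=\P0 v$ is the $L^2(E)$-projection onto $\Pk_k(E)\supset\Pk_{k-2}(E)$), so only the facet moments need the scaled trace inequality; also, Lemma~\ref{lm:bramble} as stated covers integer $s$ only, so the fractional range $0<s\le k$ in the statement requires the usual space-interpolation extension. None of these points invalidates your proof; they only indicate where it is less sharp or where it silently relies on results outside the paper.
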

\subsection{Virtual Element Forms and the Discrete Problem}\label{s:vemdiscrete}

In this section, discretize the problem and the bilinear forms presented in Section~\ref{sec:continuous_problem}.
As usual in VEM, since we do not have an analytic expression for the functions in $V_h^{k,nc}(\Omega_h)$, the terms entering in \eqref{eq:weak-2} are not computable.
Hence, we have to devise some counterparts of these terms.
First, we note that the forms $a(\cdot,\cdot)$ , 
$b(\cdot,\cdot)$
and 
$c(\cdot,\cdot)$ can be decomposed into local contributions as
\begin{equation}
\label{eq:form-split}
a(u,  v) \eqqcolon \sum_{E \in \Omega_h} a^E(u,  v) \, ,
\quad
b(u,  v) \eqqcolon \sum_{E \in \Omega_h} b^E(u,  v) \, , 
\quad
c(u,  v) \eqqcolon \sum_{E \in \Omega_h} c^E(u,  v) \, .
\end{equation}
The local bilinear form $a^E(\cdot,\cdot)$ is discretized by replacing the gradient with its projection into the space $[\mathbb P_{k-1}(E)]^{d}$. We define the bilinear form $a_h^{E}(\cdot,  \cdot) \colon V_h^{k,nc}(E) \times V_h^{k,nc}(E) \to \R$ as follows:
 \begin{equation} \label{eq:aEh}
 \begin{split}
a_h^E(u_h,  v_h) &:=
\int_E \PZ0P \nabla u_h \cdot \PZ0P \nabla v_h \, {\rm d}E 
+ 
\Stab\bigl((I - \PN) u_h, \, (I - \PN) v_h\bigr) \, ,
\end{split}
\end{equation}
where
$\Stab(\cdot,\cdot)\colon V_h^{k,nc}(E) \times V_h^{k,nc}(E) \to \R$ is 
the standard \texttt{dofi-dofi} stabilization form presented in \cite{volley, BLR:2017},
which is computable and satisfies the fundamental properties
\begin{equation}
\label{eq:sEh}
\begin{aligned}
& \alpha_*|v_h|_{1,E}^2 \leq \Stab(v_h, v_h) 
\qquad &\forall v_h \in V_h^{k,nc}(E) \\
& \Stab(v_h, v_h) \leq \alpha^* |v_h|_{1,E}^2 
\qquad &\forall v_h \in {\rm Ker}(P_E)  \, ,
\end{aligned}
\end{equation}
for two positive constants $\alpha_*$ and $\alpha^*$, independent of $h$. Here, 
$P_E: V_h^{k,nc}(E)\rightarrow \Pk_0(E)$ denotes any projection operator into the space of constant functions.

In the first integral of~\eqref{eq:aEh}, $\nabla\PN$ could be used instead of $\PZ0P \nabla$, since in the case of a constant diffusion coefficient, both choices lead to optimal convergence rates. However, in a general heterogeneous situation, the appropriate 
term to use is $\PZ0P \nabla$, see \cite{BBMR-2016}.

The local convective term $b^E(\cdot,\cdot)$ is discretized by $b_h^{E}(\cdot,  \cdot) \colon V_h^{k,nc}(E) \times V_h^{k,nc}(E) \to \R$ defined as
\begin{equation} \label{eq:bEh}
\begin{split}
b_h^E(u_h,  v_h) &:=
\int_E (\bb \cdot \nabla \P0 u_h) \P0 v_h {\rm d}E \, .
\end{split}
\end{equation}
Finally, the bilinear form $c^E(\cdot,\cdot)$ is discretized by $c_h^{E}(\cdot,  \cdot) \colon V_h^{k,nc}(E) \times V_h^{k,nc}(E) \to \R$ defined as
\begin{equation} \label{eq:cEh}
\begin{split}
c_h^E(u_h,  v_h) &:=
\int_E \P0 u_h \, \P0 v_h {\rm d}E  
+ 
|E| \, \Stab\bigl((I - \P0) u_h, \, (I - \P0) v_h\bigr) \, .
\end{split}
\end{equation}

In order to ensure stability and to impose boundary conditions, we also need to introduce some extra terms.
To get stability, we also introduce the bilinear form $d_h^E(\cdot,\cdot) \colon V_h^{k,nc}(E) \times V_h^{k,nc}(E) \to \R$
\begin{equation} \label{eq:dEh}
\begin{split}
d^E_h(u_h,  v_h) &\coloneqq
-\dfrac{1}{2} 
\int_{\partial E \setminus\Gamma}
\bb\cdot [\![\Pi_k^0 u_h]\!]
\{ \Pi_k^0 v_h \} {\rm d}s \, ,
\end{split}
\end{equation}
The reasons behind this bilinear form will become clear in Lemma~\ref{lm:symm-part} below and are related to the nonconformity of the method. 
Following \cite{burman:2004, burman:2007}, we also introduce a VEM version of the local CIP-stabilization form, in order to ensure stability in the advection-dominated regime.
It is defined as
\begin{equation}
\label{eq:JhE}
J_h^E(u_h,v_h)
\coloneqq 
 \sum_{e \subset \partial E\setminus\Gamma}  \dfrac{\gamma_e}{2} \int_e  \! \, h_e^2 \, [\![\nabla \Pg u_h]\!] \, [\![\nabla \Pg v_h]\!] \, {\rm d}s 
+
\gamma_E \, h_E \, \Stab\bigl((I - \P0) u_h, (I - \P0) v_h\bigr) \, .
\end{equation}
The parameters $\gamma_e$ and $\gamma_E$ are defined as
\begin{equation}\label{eq:gamma-loc}
\gamma_e \coloneqq \kappa_e \| \bb  \|_{L^\infty(e)} \ , \qquad \gamma_E \coloneqq \kappa_E \| \bb  \|_{L^\infty(\partial E)}\, ,
\end{equation}
where $\kappa_e$ and $\kappa_E$ are positive constants to be chosen.
We stress that, whenever $\bb = {\bf 0}$ inside an element $E$, the corresponding parameters $\gamma_e$ and $\gamma_E$ are zero, and hence $J_h^E(u_h, v_h) = 0$.
In order to complete the definition of our method, we need to impose the boundary conditions. We do this using a Nitsche-type technique, which locally consists of adding the form 
\begin{equation}\label{eq:NEh}
\begin{aligned}
\mathcal{N}_h^E(u_h,v_h) \coloneqq
&- \epsilon  \langle \PZ0P \nabla u_h \cdot \nn^E, v_h \rangle_{\partial E\cap\Gamma}
- \epsilon  \langle u_h , \PZ0P \nabla v_h \cdot \nn^E \rangle_{\partial E\cap\Gamma}\\
&+ \dfrac{\epsilon}{\delta h_E}  \sum_{e \subset \partial E\cap\Gamma} \langle \Pi^{0,e}_{k-1} u_h, \Pi^{0,e}_{k-1} v_h \rangle_{e}     
+ \langle \vert \bb \cdot \nn^E \vert \, \P0 u_h, \P0 v_h \rangle_{\partial E\cap \Gamma_{\text{in}}} \, ;
\end{aligned}
\end{equation}
see \cite{Nitsche,Junt-Stenb} for more details.
By adding these terms together, we define the complete local bilinear form $\Acip^E \colon V_h^{k,nc}(E) \times V_h^{k,nc}(E)  \to \R$ as
\begin{equation}
\label{eq:AcipE}
\Acip^E(u_h, v_h) = \epsilon a_h^E(u_h , v_h) + b^E_h(u_h , v_h) + \sigma c_h^E(u_h , v_h) + d^E_h(u_h,v_h) + \mathcal{N}_h^E(u_h,v_h) + J_h^E(u_h , v_h) \, .
\end{equation}
By summing over all the mesh elements, we obtain the global versions of the bilinear forms
\[
\begin{aligned}
&a_h(u_h, v_h) := \sum_{E \in \Omega_h} a_h^E(u_h, v_h)\,,
\qquad 
&b_h(u_h, v_h) := \sum_{E \in \Omega_h} b_h^E(u_h, v_h)\,,
\\
&c_h(u_h, v_h) := \sum_{E \in \Omega_h} c_h^E(u_h, v_h)\,,
\qquad 
&d_h(u_h, v_h) := \sum_{E \in \Omega_h} d_h^E(u_h, v_h)\,,
\\
&J_h(u_h, v_h) := \sum_{E \in \Omega_h} J_h^E(u_h, v_h)\,,
\qquad 
&\mathcal{N}_h(u_h, v_h) := \sum_{E \in \Omega_h} \mathcal{N}_h^E(u_h, v_h)\,,
\end{aligned}
\]
and
\begin{equation}
\label{eq:Acip}
\Acip (u_h, v_h) \coloneqq \sum_{E \in \Omega_h} \AcipE (u_h, v_h) \, .
\end{equation}
The local load term is defined as
\begin{equation}\label{eq:FEh}
\begin{aligned}
\mathcal{F}_h^E(v_h) &\coloneqq \int_E f \, \P0 v_h {\rm d}E 
- \epsilon  \langle g , \PZ0P \nabla v_h \cdot \nn^E \rangle_{\partial E\cap\Gamma} \\
&+ \dfrac{\epsilon}{\delta h_E} 
\sum_{e\subset \partial E\cap \Gamma}
\langle g, \Pi_{k-1}^{0,e}
v_h \rangle_e 
+ \langle \vert \bb \cdot \nn^E \vert \, g, 
\P0 
v_h \rangle_{\partial E\cap\Gamma_{\text{in}}} \, ,
\end{aligned}
\end{equation}
and the global one as
\[
\mathcal{F}_h(v_h) \coloneqq \sum_{E \in \Omega_h} \mathcal{F}_h^E(v_h) \, .
\]
Eventually, the discrete problem reads as follows:
\begin{equation}
\label{eq:cip-vem}
\left \{
\begin{aligned}
& \text{find $u_h \in V^{k,nc}_h(\Omega_h)$ s.t.} 
\\
& \Acip(u_h, \, v_h) = \mathcal{F}_h(v_h) \qquad \text{for all $v_h \in V^{k,nc}_h(\Omega_h)$.}
\end{aligned}
\right.
\end{equation}

\section{Theoretical Analysis}\label{s:theory}

Due to the nonconformity of the virtual element space and due to the projections entering formulation~\eqref{eq:cip-vem}, the solution~$u$ of the continuous problem~\eqref{eq:weak-2} does not solve the discrete problem~\eqref{eq:cip-vem}.
However, if $u \in H^2(\Omega) \cap H_g^1(\Omega)$, it solves the following problem, which is strictly connected to~\eqref{eq:cip-vem}:
\begin{equation}
\label{eq:consistency-1}
\Atcip(u , \, v_h) = \tilde{\mathcal{F}}(v_h) + \tilde{\mathcal{B}}(u, v_h) \qquad \text{for all $v_h \in V^{k,nc}_h(\Omega_h)$} \, .
\end{equation}
The form on the left-hand side of~\eqref{eq:consistency-1} is defined as
\begin{equation}\label{eq:Atcip}
\Atcip(u, v_h) \coloneqq \epsilon \, a(u,v_h) + b(u,v_h) + \sigma \, c(u,v_h) + \tilde{\mathcal{N}}_h(u,v_h) + \tilde J_h(u,v_h) \, ,
\end{equation}
where $\tilde{\mathcal{N}}_h(u,v_h)$ and $\tilde J_h(u,v_h)$ are sums over all mesh elements $E$ of the local contributions
\begin{equation}\label{eq:tildeNEh}
\begin{aligned}
\tilde{\mathcal{N}}_h^E(u,v_h) \coloneqq
&- \epsilon  \langle \nabla u \cdot \nn^E, v_h \rangle_{\partial E\cap\Gamma}
-  \epsilon \langle u, \PZ0P \nabla v_h \cdot \nn^E \rangle_{\partial E\cap\Gamma}
 \\
&+ \dfrac{\epsilon}{\delta h_E} 
\sum_{e \subset \partial E\cap\Gamma} \langle u, \Pi_{k-1}^{0,e} 
v_h \rangle_{e}    
+\langle \vert \bb \cdot \nn^E \vert u, \P0 v_h \rangle_{\partial E\cap\Gamma_\text{in}} \, ,
\end{aligned}
\end{equation}
and 
\begin{equation}\label{eq:tildeJEh}
\tilde J_h^E(u, v_h) 
\coloneqq 
\dfrac{1}{2} \sum_{e \subset \partial E\setminus\Gamma} 
\gamma_e \int_e  \, h_e^2 \, [\![\nabla u]\!]\,  [\![\nabla \Pg v_h]\!] \, {\rm d}s\, ,
\end{equation}
respectively.
On the right-hand side of~\eqref{eq:consistency-1}, the load term $\tilde{\mathcal{F}}(v_h)$ is defined as the sum over all mesh elements $E$ of the local contributions
\begin{equation}\label{eq:tildeFEh}
\begin{aligned}
\tilde{\mathcal{F}}_h^E(v_h) &\coloneqq \int_E f \, v_h {\rm d}E 
-  \epsilon \langle g, \PZ0P \nabla v_h \cdot \nn^E \rangle_{\partial E\cap \Gamma}
\\
&+ \dfrac{\epsilon}{\delta h_E} 
\sum_{e\subset \partial E\cap \Gamma}
\langle g, \Pi_{k-1}^{0,e} 
v_h \rangle_{e}    
+  \langle \vert \bb \cdot \nn^E \vert g, \P0 v_h \rangle_{\partial E\cap\Gamma_\text{in}} \, ,
\end{aligned}
\end{equation}
and form $\tilde{\mathcal{B}}(u,v_h)$, which arises from the nonconformity of the method, is defined as
\[
\tilde{\mathcal{B}}(u,v_h) \coloneqq \sum_{e \in {\mathcal E}_h^0} \epsilon \int_e 
\nabla u\cdot [\![v_h]\!]
{\rm d}s \, .
\]

\subsection{Preliminary results}

In this section, we present some results that are useful in the following analysis. Some of them are already known for conforming virtual element spaces. The first one is an inverse inequality for functions in $V_h^{k,nc}(E)$.
\begin{proposition}[Inverse inequality]
\label{lm:inverse}
Under assumption \textbf{(A1)}, for any $E \in \Omega_h$ we have   
\[
| v_h |_{1,E} \lesssim h_E^{-1} \| v_h \|_{0,E} 
\quad \forall v_h \in V_h^{k,nc}(E) .
\]
\end{proposition}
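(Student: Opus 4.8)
The plan is to start from the elementwise Green identity
\[
|v_h|_{1,E}^2 = \int_E \nabla v_h\cdot\nabla v_h\,{\rm d}E = -\int_E v_h\,\Delta v_h\,{\rm d}E + \int_{\partial E} v_h\,(\nabla v_h\cdot\nn^E)\,{\rm d}s,
\]
and to control the two terms on the right by exploiting the defining structure of $V_h^{k,nc}(E)$, namely $\Delta v_h\in\Pk_k(E)$ and $\nabla v_h\cdot\nn^E|_e\in\Pk_{k-1}(e)$ for every facet $e\subset\partial E$. Since both $\Delta v_h$ and the normal trace $\nabla v_h\cdot\nn^E$ are (piecewise) polynomial, the idea is to bound each of them by $|v_h|_{1,E}$, with the expected negative power of $h_E$, through the bubble-function technique, and then to conclude by a Cauchy--Schwarz/trace/Young argument in which all the $|v_h|_{1,E}^2$ contributions are absorbed into the left-hand side.

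First I would prove the interior estimate $\|\Delta v_h\|_{0,E}\lesssim h_E^{-1}|v_h|_{1,E}$. Let $b_E$ be an interior bubble on $E$ (vanishing on $\partial E$, with $0\le b_E\le 1$), constructed from the star-shapedness granted by \textbf{(A1)}. Applying Green's formula with test function $b_E\Delta v_h$ gives $(\Delta v_h,\,b_E\Delta v_h)_E = -(\nabla v_h,\nabla(b_E\Delta v_h))_E + \langle \nabla v_h\cdot\nn^E,\,b_E\Delta v_h\rangle_{\partial E}$, where the boundary term vanishes because $b_E=0$ on $\partial E$. Combining the bubble norm equivalence $\|\Delta v_h\|_{0,E}^2\lesssim (\Delta v_h,b_E\Delta v_h)_E$ with the polynomial inverse inequality $\|\nabla(b_E\Delta v_h)\|_{0,E}\lesssim h_E^{-1}\|\Delta v_h\|_{0,E}$ and dividing by $\|\Delta v_h\|_{0,E}$ yields the claim.

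Next I would bound the normal trace, $\|\nabla v_h\cdot\nn^E\|_{0,\partial E}\lesssim h_E^{-1/2}|v_h|_{1,E}$. Here I use a facet lifting $w\in H^1(E)$ with $w|_e=\nabla v_h\cdot\nn^E$ on each facet $e$, $w=0$ on $\partial E\setminus e$, and $\|w\|_{0,E}\lesssim h_E^{1/2}\|\nabla v_h\cdot\nn^E\|_{0,\partial E}$, $|w|_{1,E}\lesssim h_E^{-1/2}\|\nabla v_h\cdot\nn^E\|_{0,\partial E}$. Green's formula then gives $\|\nabla v_h\cdot\nn^E\|_{0,\partial E}^2=(\Delta v_h,w)_E+(\nabla v_h,\nabla w)_E$, and after Cauchy--Schwarz, division by $\|\nabla v_h\cdot\nn^E\|_{0,\partial E}$, and insertion of the interior estimate of the previous step one obtains the stated facet bound. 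I stress that although the interior and facet estimates look mutually dependent, the dependence is genuinely broken: the interior bound is established first and independently (the interior bubble kills the boundary contribution), and only afterwards is it fed into the facet bound.

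Finally I would return to the Green identity and close the argument. The volume term is estimated by $\|v_h\|_{0,E}\|\Delta v_h\|_{0,E}\lesssim h_E^{-1}\|v_h\|_{0,E}|v_h|_{1,E}$. For the boundary term I combine the multiplicative trace inequality $\|v_h\|_{0,\partial E}\lesssim h_E^{-1/2}\|v_h\|_{0,E}+\|v_h\|_{0,E}^{1/2}|v_h|_{1,E}^{1/2}$ with the facet bound, producing terms of the form $h_E^{-1}\|v_h\|_{0,E}|v_h|_{1,E}$ and $h_E^{-1/2}\|v_h\|_{0,E}^{1/2}|v_h|_{1,E}^{3/2}$; a Young inequality converts the latter into $\delta|v_h|_{1,E}^2+C_\delta\,h_E^{-2}\|v_h\|_{0,E}^2$ with $\delta$ arbitrarily small. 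Choosing $\delta$ small enough to absorb all $|v_h|_{1,E}^2$ contributions into the left-hand side gives $|v_h|_{1,E}^2\lesssim h_E^{-2}\|v_h\|_{0,E}^2$, which is the assertion. I expect the main obstacle to lie in the rigorous construction of the interior and facet bubble functions on a general star-shaped polytope (not merely a simplex), together with the associated bubble norm equivalence and polynomial inverse estimates: this is precisely where the mesh regularity assumption \textbf{(A1)} is used in an essential way, and it is also what renders the otherwise circular-looking pair of interior/facet estimates legitimate.
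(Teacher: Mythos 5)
Your proof is correct in substance and shares the paper's overall skeleton: both start from the Green identity $|v_h|_{1,E}^2 = -\int_E v_h\,\Delta v_h\,{\rm d}E + \int_{\partial E} v_h\,\nabla v_h\cdot\nn^E\,{\rm d}s$, both reduce everything to the two auxiliary bounds $\|\Delta v_h\|_{0,E}\lesssim h_E^{-1}|v_h|_{1,E}$ and $\|\nabla v_h\cdot\nn^E\|_{0,\partial E}\lesssim h_E^{-1/2}|v_h|_{1,E}$, and both close with the multiplicative trace inequality on $\|v_h\|_{0,\partial E}$ plus a Young absorption. Where you differ is in how the two auxiliary bounds are obtained. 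The paper simply cites the literature for the Laplacian estimate and then gets the normal-trace estimate from a multiplicative trace inequality applied to $\nabla v_h$ combined with that same Laplacian bound; you instead reprove both from scratch via interior-bubble and facet-lifting arguments. Your route is self-contained (essentially reconstructing the cited lemma) at the cost of the technical work of building bubbles and liftings on general star-shaped polytopes, whereas the paper's route is shorter but leans on an external reference. Your remark that the apparent circularity between the interior and facet estimates is broken (interior first, since the bubble kills the boundary term) is exactly right and worth stating.

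One technical imprecision to fix in your facet step: a function $w\in H^1(E)$ with $w|_e=\nabla v_h\cdot\nn^E$ on a facet $e$ and $w=0$ on $\partial E\setminus e$ generally does not exist, because the resulting boundary trace is discontinuous at $\partial e$ and hence not in $H^{1/2}(\partial E)$ unless the polynomial vanishes there. The standard repair is to lift $b_e\,(\nabla v_h\cdot\nn^E)$ instead, where $b_e$ is a facet bubble vanishing on $\partial e$, and to invoke the norm equivalence $\|q\|_{0,e}^2\lesssim\int_e b_e\,q^2\,{\rm d}s$ for polynomials $q$; the scalings you state for $\|w\|_{0,E}$ and $|w|_{1,E}$ are then those of the bubble-weighted lifting, and the rest of your argument goes through unchanged. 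This is a fixable gap in a standard construction, not a flaw in the strategy.
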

\begin{proof}
An integration by parts gives
\begin{equation}\label{eq:inverse-zero}
| v_h |^2_{1,E} 
= 
\int_E \nabla v_h \cdot \nabla v_h \, {\rm d}E 
= - \int_E v_h \, \Delta v_h {\rm d}E
+ \int_{\partial E} v_h \, \nabla v_h \cdot \nn^E  \, {\rm d}s . 
\end{equation}
Since $\Delta v_h$ is polynomial we have (see for instance \cite{BLR:2017}\footnote{Several of the results we quote have been proven in the references for $d=2$. Nevertheless, these results can be extended to $d=3$ in a straightforward manner.})
\begin{equation}\label{eq:inv-lapl}
\| \, \Delta v_h \|_{0,E}\le C_\Delta h_E^{-1} |  v_h |_{1,E},   
\end{equation}
for a constant~$C_\Delta>0$ independent of $h_E$.
Therefore, for the first term on the right-hand side of \eqref{eq:inverse-zero}, we get
\begin{equation} \label{eq:inverse-first}
- \int_E v_h \, \Delta v_h \,  {\rm d}E
\leq
\| v_h \|_{0,E} \| \, \Delta v_h \|_{0,E}
\le C_\Delta
h^{-1}_E \, \| v_h \|_{0,E} |  v_h |_{1,E} \, .
\end{equation}
For the second term, using the Cauchy-Schwarz inequality and the multiplicative trace inequality, we obtain 
\begin{equation}\label{eq:inv-nd-1}
\begin{aligned}
\int_{\partial E} v_h \, \nabla v_h \cdot \nn^E \, {\rm d}s
& \leq
\left \| v_h \right \|_{0,\partial E} \, 
\left \| \nabla v_h \cdot \nn^E \right \|_{0,\partial E} \\
& \leq C_{t_1}
\| v_h \|_{0,E}^{\frac{1}{2}} \,
\left( h^{-1}_E\| v_h\|_{0,E} + | v_h|_{1,E}\right)^{\frac{1}{2}} \,
\left \| \nabla v_h \cdot \nn^E \right \|_{0,\partial E} \, ,
\end{aligned}
\end{equation}
for a positive constant $C_{t_1} > 0$ independent of $h_E$. 
The multiplicative trace inequality and~\eqref{eq:inv-lapl} give 
\begin{equation}\label{eq:inverse-two}
\begin{split}
\left \| \nabla v_h \cdot \nn^E \right \|_{0,\partial E} &\le
C_{t_2} | v_h |_{1,E}^{\frac{1}{2}}
\left(h_E^{-1}| v_h |_{1,E}+\| \Delta v_h \|_{0,E}\right)^{\frac{1}{2}}\\
&\le C_{t_2} | v_h |_{1,E}^{\frac{1}{2}}
\left(h_E^{-1}| v_h |_{1,E}+C_\Delta h_E^{-1}| v_h |_{1,E}\right)^{\frac{1}{2}}\\
&\le C_{t_2}\sqrt{1+C_\Delta}h_E^{-\frac{1}{2}}| v_h |_{1,E},
\end{split}
\end{equation}
for $C_{t_2} > 0$ independent of $h_E$.
Hence, with~$C_t:=\max\{C_{t_1}, C_{t_2}\}$, we obtain 
\begin{equation}\label{eq:inv-nd3}
\begin{aligned}
\int_{\partial E} v_h \, \nabla v_h \cdot \nn^E \, {\rm d}s
& \le
C_t^2\sqrt{1+C_\Delta}\, h^{-\frac{1}{2}}_E\| v_h \|_{0,E}^{\frac{1}{2}} \,
\left( h^{-1}_E\| v_h\|_{0,E} + | v_h|_{1,E}\right)^{\frac{1}{2}} |v_h|_{1,E}.
\end{aligned}
\end{equation}
From~\eqref{eq:inverse-zero},  \eqref{eq:inverse-first}, and~\eqref{eq:inv-nd3}, using the Young inequality~$ab\le \frac{1}{2\eta}a^2+\frac{\eta}{2}b^2$ for any $\eta>0$,
we get 
\[
\begin{split}
| v_h |_{1,E} &\le 
C_\Delta h_E^{-1}\| v_h\|_{0,E}
+ C_t^2\sqrt{1+C_\Delta}\, h^{-\frac{1}{2}}_E\| v_h \|_{0,E}^{\frac{1}{2}}\left( h^{-1}_E\| v_h\|_{0,E} + | v_h|_{1,E}\right)^{\frac{1}{2}} \\
&\le C_\Delta h_E^{-1}\| v_h\|_{0,E}
+ C_t^2\sqrt{1+C_\Delta}
\left(\frac{1}{2\eta}h^{-1}_E\| v_h\|_{0,E} +\frac{\eta}{2}h^{-1}_E\| v_h\|_{0,E}+\frac{\eta}{2}| v_h|_{1,E}\right).
\end{split}
\]
Choosing $\eta=1/(C_t^2\sqrt{1+C_\Delta})$, we conclude
\[
| v_h |_{1,E}\le \left(2C_\Delta+C_t^4(1+C_\Delta)+1\right)
h^{-1}_E\| v_h\|_{0,E},
\]
and the proof is complete.
\end{proof}
The second result is an inverse trace inequality for functions in $V_h^{k,nc}(E)$ with internal DoFs equal to zero. 
\begin{lemma} [Inverse trace inequality]
\label{lm:invtrace}
Under assumption \textbf{(A1)}, for any $E \in \Omega_h$ 
we have
\[
\| v_h \|_{0,E} 
\lesssim 
\left( h_E \sum_{e \subset \partial E} \| \Pi^{0,e}_{k-1} v_h \|_{0, e}^2 \right)^{\frac{1}{2}}\quad \text{$\forall v_h \in V_h^{k,nc}(E)$ such that $\Pi^{0,E}_{k-2} v_h \equiv 0$.}
\]
\end{lemma}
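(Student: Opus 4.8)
The plan is to bound $\|v_h\|_{0,E}$ by means of the orthogonal splitting
\[
\|v_h\|_{0,E}^2 = \|\P0 v_h\|_{0,E}^2 + \|(I-\P0)v_h\|_{0,E}^2 ,
\]
separating a \emph{polynomial part} $\P0 v_h$ from a \emph{remainder} $(I-\P0)v_h$. Lemma~\ref{lm:bramble} (with $s=1$) bounds the remainder by $\|(I-\P0)v_h\|_{0,E}\lesssim h_E|v_h|_{1,E}$, so the whole statement reduces to the two estimates $\|\P0 v_h\|_{0,E}\lesssim h_E^{1/2}S$ and $|v_h|_{1,E}\lesssim h_E^{-1/2}S$, where $S:=\bigl(\sum_{e\subset\partial E}\|\Pi^{0,e}_{k-1}v_h\|_{0,e}^2\bigr)^{1/2}$ denotes the facet data on the right-hand side.

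The core step is the bound on the polynomial part, where the structure of the space is used. I would first identify $\P0 v_h$ through the enhancement constraint in~\eqref{eq:local-space}: since the interior moments of $v_h$ up to order $k-2$ vanish and the enhancement ties the higher moments of $v_h$ to those of $\PN v_h$, one obtains $\P0 v_h = (I-\Pi^{0,E}_{k-2})\PN v_h$, so it suffices to control $\PN v_h$. Testing the defining relation of $\PN$ with $p=\PN v_h$ and integrating by parts gives $|\PN v_h|_{1,E}^2=\int_E\nabla v_h\cdot\nabla p=-\int_E v_h\,\Delta p+\int_{\partial E}v_h\,(\nabla p\cdot\nn^E)$; the volume term vanishes because $\Delta p\in\Pk_{k-2}(E)$ and the interior moments of $v_h$ are zero, while in the boundary term $\nabla p\cdot\nn^E\in\Pk_{k-1}(e)$ on each facet, so $v_h$ may be replaced by $\Pi^{0,e}_{k-1}v_h$. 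A polynomial inverse trace inequality $\|\nabla p\cdot\nn^E\|_{0,\partial E}\lesssim h_E^{-1/2}|p|_{1,E}$ then yields $|\PN v_h|_{1,E}\lesssim h_E^{-1/2}S$, and a Poincaré inequality gives $\|\P0 v_h\|_{0,E}\lesssim h_E|\PN v_h|_{1,E}\lesssim h_E^{1/2}S$ (the mean is controlled because $\Pi^{0,E}_0 v_h=0$ for $k\ge2$, and for $k=1$ through the normalization $\int_{\partial E}\PN v_h=\int_{\partial E}v_h=\sum_e|e|\,\Pi^{0,e}_0 v_h$, again bounded by $S$).

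For the second estimate I would start from the identity $|v_h|_{1,E}^2=-\int_E v_h\,\Delta v_h+\int_{\partial E}v_h\,(\nabla v_h\cdot\nn^E)$ already used in~\eqref{eq:inverse-zero}. Since $\Delta v_h\in\Pk_k(E)$, in the volume term $v_h$ may be replaced by $\P0 v_h$, and~\eqref{eq:inv-lapl} gives $-\int_E v_h\,\Delta v_h\le\|\P0 v_h\|_{0,E}\|\Delta v_h\|_{0,E}\lesssim h_E^{-1}\|\P0 v_h\|_{0,E}|v_h|_{1,E}$. In the boundary term $\nabla v_h\cdot\nn^E\in\Pk_{k-1}(e)$, so it equals $\sum_e\int_e\Pi^{0,e}_{k-1}v_h\,(\nabla v_h\cdot\nn^E)$ and is bounded, via~\eqref{eq:inverse-two}, by $S\,h_E^{-1/2}|v_h|_{1,E}$. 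Dividing by $|v_h|_{1,E}$ and inserting the bound on $\|\P0 v_h\|_{0,E}$ already obtained gives $|v_h|_{1,E}\lesssim h_E^{-1/2}S$; combining the two estimates in the orthogonal splitting closes the argument.

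I expect the main obstacle to be the bound on the polynomial part, that is, transferring control of the interior polynomial content $\P0 v_h$ to the facet data: this is exactly where the enhancement identity and the polynomial structure of the normal fluxes must be exploited, and where the lowest-order (constant) mode has to be recovered separately, since for $k=1$ it is not annihilated by the vanishing interior moments and must be captured through the boundary normalization of $\PN$. An alternative, more abstract route would be a scaling argument to a reference configuration together with the unisolvence of the degrees of freedom (the map $v_h\mapsto\{\Pi^{0,e}_{k-1}v_h\}_e$ is injective on the subspace $\Pi^{0,E}_{k-2}v_h\equiv0$, so the right-hand side is a norm there) and equivalence of norms on the resulting finite-dimensional space; there the delicate point is the uniformity of the constant over the admissible element shapes under \textbf{(A1)}.
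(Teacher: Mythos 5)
Your proposal is correct, and its overall architecture coincides with the paper's: the same orthogonal splitting $\| v_h \|_{0,E}^2 = \| \P0 v_h \|_{0,E}^2 + \| (I-\P0) v_h \|_{0,E}^2$, the same reduction of the remainder to $h_E|v_h|_{1,E}$ via Lemma~\ref{lm:bramble}, and the same integration-by-parts bound $|v_h|_{1,E}^2 \lesssim h_E^{-1}\sum_e \|\Pi^{0,e}_{k-1}v_h\|_{0,e}^2$, obtained by replacing $v_h$ with $\P0 v_h$ in the volume term and with $\Pi^{0,e}_{k-1}v_h$ on the facets. The one point where you genuinely diverge is the bound on the polynomial part $\|\P0 v_h\|_{0,E}$: the paper disposes of it by asserting that ``the techniques leading to [Brenner--Sung, Lemma 2.18] also apply to the nonconforming space,'' whereas you prove it directly through the enhancement identity $\P0 v_h = (I-\Pi^{0,E}_{k-2})\PN v_h$, a duality/integration-by-parts bound $|\PN v_h|_{1,E}\lesssim h_E^{-1/2}S$ (volume term killed by $\Delta \PN v_h\in\Pk_{k-2}(E)$ and the vanishing interior moments, boundary term absorbed by the facet projections), and a Poincar\'e step, with the constant mode for $k=1$ recovered from the boundary normalization of $\PN$. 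This buys a self-contained argument tailored to the enhanced nonconforming space where the paper relies on an external reference originally written for a different setting; the price is the extra bookkeeping around the lowest-order mode, which you correctly identify as the delicate case. Your alternative suggestion (unisolvence plus finite-dimensional norm equivalence under a scaling argument) would also work but, as you note, hides the shape-uniformity of the constant, which is exactly what the explicit argument makes transparent.
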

\begin{proof}
Thanks to the orthogonality of the $\P0$ projection in $L^2(E)$, we have
\begin{equation} \label{eq:invtr-0}
\| v_h \|_{0,E}^2 = \| (I - \P0) v_h \|_{0,E}^2 + \| \P0 v_h \|_{0,E}^2 \, . 
\end{equation}
We now remark that the techniques leading to \cite[Lemma~2.18]{brenner-sung:2018} also apply to the nonconforming space $V_h^{k,nc}(E)$. Therefore, using $\Pi^{0,E}_{k-2} v_h = 0$, we have
\begin{equation} \label{eq:invtr-1}
\|\P0 v_h \|_{0,E}^2 \lesssim h_E \sum_{e \subset \partial E} \| \Pi^{0,e}_{k-1} v_h \|_{0,e}^2 \, .
\end{equation}
In addition, Lemma \ref{lm:bramble} gives
\begin{equation} \label{eq:invtr-2}
\| (I - \P0) v_h \|_{0,E}^2 \lesssim h_E^2 \, | v_h |_{1,E}^2 \, .
\end{equation}
Using integration by parts, recalling that $\Delta v_h\in \Pk_k(E)$ and $(\nabla v_h \cdot \nn^E){}_{|_e}\in \Pk_{k-1}(e)$, 
exploiting estimates \eqref{eq:inv-lapl}, \eqref{eq:invtr-1} and \eqref{eq:inverse-two} we obtain
\begin{equation} \label{eq:invtr-3}
\begin{aligned}
| v_h |^2_{1,E}
&= - \int_E v_h \, \Delta v_h {\rm d}E
+ \int_{\partial E} v_h \, \nabla v_h \cdot \nn^E \, {\rm d}s \\
&=
- \int_E \P0 v_h \, \Delta v_h {\rm d}E
+ \sum_{e \subset \partial E} \int_{e} \Pi^{0,e}_{k-1} v_h \, \nabla v_h \cdot \nn^E \, {\rm d}s \\
&\leq 
\| \P0 v_h \|_{0,E} \| \Delta v_h \|_{0,E}
+
\sum_{e \subset \partial E}
\| \Pi^{0,e}_{k-1} v_h \|_{0,e} \left \| \nabla v_h \cdot \nn^E \right \|_{0,e} \\
&\lesssim
\left( h^{-1}_E \sum_{e \subset \partial E}
\| \Pi^{0,e}_{k-1} v_h \|_{0,e}^2 \right)^{\frac{1}{2}} |v_h|_{1,E} .
\end{aligned}
\end{equation}
It follows that 
\begin{equation} \label{eq:invtr-4}
| v_h |^2_{1,E} \lesssim h^{-1}_E \sum_{e \subset \partial E}
\| \Pi^{0,e}_{k-1} v_h \|_{0,e}^2 .
\end{equation}
Hence, from \eqref{eq:invtr-2} and~\eqref{eq:invtr-4}, we get
\begin{equation} \label{eq:invtr-5}
\| (I - \P0) v_h \|_{0,E}^2 \lesssim h_E \sum_{e \subset \partial E}
\| \Pi^{0,e}_{k-1} v_h \|_{0,e}^2 \, .
\end{equation}
Collecting \eqref{eq:invtr-0}, \eqref{eq:invtr-1}, and \eqref{eq:invtr-5} concludes the proof.
\end{proof}
We now construct an Owsald-type interpolation operator $\pi$ that maps piecewise (sufficiently) smooth functions into the nonconforming space $V_h^{k,nc}(\Omega_h)$. 
Recall the definitions~\eqref{eq:dofsInt} and~\eqref{eq:dofsEdges} of the local DoFs.
To define the global DoFs, we thus average the local DoFs at interelement boundaries. 
More precisely, the interpolant is constructed as 
\begin{equation}
\label{eq:oswald-inter-1}
\pi v= \sum_{e \in \mathcal{E}_h} \sum_{|{\boldsymbol \ell|\le k-1}}\mu_e^{{\boldsymbol \ell}} (v) \phi_e^{{\boldsymbol \ell}} + \sum_{E \in \Omega_h} \sum_{|\aaa| \leq k-2} \mu^{\aaa}_E(v) \varphi_E^{\aaa} \, ,
\end{equation}
where $\{ \varphi_e^{{\boldsymbol \ell}} \}$ is the set of basis functions associated to the DoFs at the mesh skeleton $\ek$, and $\{ \varphi_E^{\aaa} \}$ is the set of basis functions associated to the interior DoFs. For an interior facet $e\in \mathcal{E}_h^0$, $e\subset\partial E\cap \partial E'$, the coefficient $\mu_e^{{\boldsymbol \ell}} (v)$ is defined as 
\begin{equation}
\label{eq:oswald-inter-2}
\mu_e^{{\boldsymbol \ell}} (v)  
\coloneqq  
\dfrac{1}{2} \left( \mu_{E,e}^{{\boldsymbol \ell}} (v) +  \mu_{E',e}^{{\boldsymbol \ell}} (v)\right),
\end{equation}
while for a boundary edge $e\in \mathcal{E}_h\setminus \mathcal{E}_h^0$, it is simply defined as
\begin{equation}
\label{eq:oswald-inter-2bis}
\mu_e^{{\boldsymbol \ell}} (v)  
\coloneqq  
\mu_{E,e}^{{\boldsymbol \ell}} (v) .
\end{equation}
Therefore, considering \eqref{eq:jmp-avrg} and \eqref{eq:dofsEdges}, on any edge $e$ we get
\begin{equation}\label{eq:avrg-oswald}
\Pi^{0,e}_{k-1}(\pi v) =  \{ \Pi^{0,e}_{k-1} v  \}  
= \Pi^{0,e}_{k-1} (\{ v  \}) .
\end{equation}
The main result of this section is the following theorem.
\begin{theorem}\label{prp:clmest}
Let $p \in \mathbb{P}_{k}(\Omega_h)$ be a (discontinuous) piecewise polynomial, and let $\pi: \mathbb P_{k}(\Omega_h) \to V_h^{k,nc}(\Omega_h)$ denotes the Oswald's interpolant. We have that
\[
\| (I - \pi) \, p \|_{0,E}^2
\lesssim
h_E \sum_{e \subset \partial E\setminus\Gamma} \| [\![p]\!] \|_{0,e}^2 \qquad \forall E \in \Omega_h \, , \quad  \forall p \in \Pk_{k}(\Omega_h) \, .
\]
\end{theorem}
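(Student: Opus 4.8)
The plan is to set $w \coloneqq (I-\pi)p$ and to exploit, element by element, that $w$ lies in the local space $V_h^{k,nc}(E)$ with a very special DoF structure. This will let me invoke the inverse trace inequality of Lemma~\ref{lm:invtrace}, which converts the $L^2(E)$-norm into a sum of edge contributions that turn out to be exactly (halves of) the jumps of $p$.

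First I would verify that $p|_E\in V_h^{k,nc}(E)$: indeed $\nabla p\cdot\nn^E\in\Pk_{k-1}(e)$ on every facet, $\Delta p\in\Pk_{k-2}(E)\subset\Pk_k(E)$, and the enhancement constraint holds trivially since $\PN p=p$. Hence $w|_E=p|_E-(\pi p)|_E\in V_h^{k,nc}(E)$. Next I would inspect the DoFs of $w$. The interior moments in the Oswald interpolant are \emph{not} averaged, so $\mu_E^{\aaa}(\pi p)=\mu_E^{\aaa}(p|_E)$ for all $|\aaa|\le k-2$; thus $\mu_E^{\aaa}(w)=0$, which is precisely the statement $\Pi^{0,E}_{k-2}w\equiv 0$. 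This is exactly the hypothesis required by Lemma~\ref{lm:invtrace}, giving
\[
\|w\|_{0,E}^2\lesssim h_E\sum_{e\subset\partial E}\|\Pi^{0,e}_{k-1}w\|_{0,e}^2.
\]

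It then remains to identify $\Pi^{0,e}_{k-1}w$ on each facet. For a boundary facet $e\subset\Gamma$ the interpolant copies the single-sided moments, so $\mu_{E,e}^{\boldsymbol\ell}(\pi p)=\mu_{E,e}^{\boldsymbol\ell}(p)$ and hence $\Pi^{0,e}_{k-1}w=0$; this is what lets me drop $\Gamma$ from the sum. For an interior facet $e\subset\partial E\cap\partial E'$ I would use the averaging identity~\eqref{eq:avrg-oswald}, namely $\Pi^{0,e}_{k-1}(\pi p)=\{\Pi^{0,e}_{k-1}p\}$, to obtain
\[
\Pi^{0,e}_{k-1}w=\Pi^{0,e}_{k-1}(p|_E)-\tfrac{1}{2}\bigl(\Pi^{0,e}_{k-1}(p|_E)+\Pi^{0,e}_{k-1}(p|_{E'})\bigr)=\tfrac{1}{2}\,\Pi^{0,e}_{k-1}\bigl([\![p]\!]\cdot\nn^E\bigr),
\]
where I used $p|_E-p|_{E'}=[\![p]\!]\cdot\nn^E$. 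By $L^2(e)$-stability of the projection and $|\nn^E|=1$ this yields $\|\Pi^{0,e}_{k-1}w\|_{0,e}\le\tfrac12\|[\![p]\!]\|_{0,e}$. Substituting into the displayed bound and summing only over interior facets completes the argument.

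The routine parts are the membership $p|_E\in V_h^{k,nc}(E)$ and the stability of $\Pi^{0,e}_{k-1}$. The step carrying the real content — and where I would be most careful — is the DoF bookkeeping: recognizing that the averaging in the definition of $\pi$ simultaneously forces the interior moments of $w$ to vanish (so that Lemma~\ref{lm:invtrace} applies) and turns the edge moments of $w$ into exactly half the jump of $p$. Once this computation is in place, the estimate is a direct assembly with the inverse trace inequality.
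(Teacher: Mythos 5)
Your argument is correct and follows essentially the same route as the paper's proof: both reduce the bound to the inverse trace inequality of Lemma~\ref{lm:invtrace} after observing that the interior moments of $(I-\pi)p$ vanish, and both identify the facet moments as $\tfrac12\,\Pi^{0,e}_{k-1}([\![p]\!]\cdot\nn^E)$ on interior facets (zero on boundary facets) via the averaging identity~\eqref{eq:avrg-oswald}. Your explicit verification that $p|_E\in V_h^{k,nc}(E)$ is a small point the paper leaves implicit, but otherwise the two proofs coincide.
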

\begin{proof}
We introduce the difference
\[
d \coloneqq (I - \pi ) p \, .
\]
We restrict our attention to an element $E\in\Omega_h$, and consider $d^E \coloneqq d|_E$.
From \eqref{eq:avrg-oswald} we get
\begin{equation}\label{eq:jmp-eqn}
\Pi^{0,e}_{k-1} d^E = \left\{
\begin{aligned}
&\frac{1}{2} \Pi^{0,e}_{k-1} ([\![p]\!]  \cdot \nn^E )&\mbox{ if $e\not\subset\Gamma$},\\  
& 0 &\mbox{ if $e \subset\Gamma$}. 
\end{aligned}
\right.
\end{equation}
We now observe that the interior DoFs of $d^E$ are equal to zero, so that $\Pi^{0,E}_{k-2}d^E = 0$, see \eqref{eq:dofsInt}. Hence, from Lemma~\ref{lm:invtrace} and~\eqref{eq:jmp-eqn}, we obtain
\begin{equation}\label{1}
\begin{aligned}
\| d^E \|_{0,E} 
\lesssim 
&\left(  h_E \sum_{e \subset \partial E\setminus\Gamma}\| \Pi^{0,e}_{k-1} d^E \|_{0,e}^2 \right)^{\frac{1}{2}}
\lesssim 
\left(  h_E  \sum_{e \subset \partial E\setminus\Gamma}  \| \Pi^{0,e}_{k-1} ([\![p]\!]  \cdot \nn^e )\|_{0,e}^2   \right)^{\frac{1}{2}} \\
\lesssim 
&\left( h_E \sum_{e \subset \partial E\setminus\Gamma}  \| [\![p]\!]  \cdot \nn^e \|_{0,e}^2   \right)^{\frac{1}{2}} =
\left( h_E  \sum_{e \subset \partial E\setminus\Gamma}  \| [\![p]\!]   \|_{0,e}^2   \right)^{\frac{1}{2}}  \, .
\end{aligned}
\end{equation}
\end{proof}

\begin{lemma}
\label{lm:clmcont}
Under assumption \textbf{(A1)}, for every $E \in \Omega_h$, we have
\[
\| \pi p \|_{0,E} \lesssim \| p \|_{0,\mathcal{D}(E)} \quad \text{for all $p \in \Pk_k(\Omega_h)$}\, ,
\]
where $\mathcal{D}(E) \coloneqq \bigcup \{ K \in \Omega_h \quad \text{s.t.}\quad | \partial E \cap \partial K |  > 0 \}.$
\end{lemma}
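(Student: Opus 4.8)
The plan is to bound the $L^2$-norm of $\pi p$ by comparing $\pi p$ with $p$ itself and using the triangle inequality together with Theorem~\ref{prp:clmest}. Specifically, I would write
\[
\| \pi p \|_{0,E} \leq \| (I - \pi) p \|_{0,E} + \| p \|_{0,E}
\]
and immediately control the first term on the right by the already-proven bound
\[
\| (I - \pi) p \|_{0,E}^2 \lesssim h_E \sum_{e \subset \partial E\setminus\Gamma} \| [\![p]\!] \|_{0,e}^2 \, ,
\]
while the second term is trivially $\|p\|_{0,E} \leq \|p\|_{0,\mathcal{D}(E)}$ since $E \subset \mathcal{D}(E)$. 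So the crux reduces to estimating the jump contributions $\sum_{e \subset \partial E\setminus\Gamma} \| [\![p]\!] \|_{0,e}^2$ by $\|p\|_{0,\mathcal{D}(E)}^2$, possibly up to a scaling factor in $h_E$ that the factor $h_E$ in front will absorb.

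The key step is therefore to bound each facet jump. For an internal facet $e \subset \partial E \cap \partial K$, the jump $[\![p]\!]$ involves the traces $p|_E$ and $p|_K$ on $e$. Using a polynomial trace (inverse) inequality on each of the two polynomials $p|_E \in \Pk_k(E)$ and $p|_K \in \Pk_k(K)$, which under assumption \textbf{(A1)} reads $\|q\|_{0,e}^2 \lesssim h_E^{-1} \|q\|_{0,E}^2$ for $q \in \Pk_k(E)$ (and similarly on $K$), I would estimate
\[
\| [\![p]\!] \|_{0,e}^2 \lesssim \| p \|_{0,e,E}^2 + \| p \|_{0,e,K}^2 \lesssim h_E^{-1} \bigl( \| p \|_{0,E}^2 + \| p \|_{0,K}^2 \bigr) \, ,
\]
where the two terms on the middle are the one-sided traces. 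Here I use the quasi-uniformity and the facet-diameter lower bound from \textbf{(A1)} so that $h_K$ and $h_e$ are comparable to $h_E$, allowing a single power $h_E^{-1}$ to serve for both neighbours. Summing over the finitely many facets $e \subset \partial E\setminus\Gamma$ — finitely many because $n_E$ is bounded under the mesh regularity — and noting that every neighbouring element $K$ appearing is contained in $\mathcal{D}(E)$, I obtain
\[
\sum_{e \subset \partial E\setminus\Gamma} \| [\![p]\!] \|_{0,e}^2 \lesssim h_E^{-1} \| p \|_{0,\mathcal{D}(E)}^2 \, .
\]

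Substituting back, the offending factor $h_E$ from Theorem~\ref{prp:clmest} cancels the $h_E^{-1}$ from the trace inequalities, giving $\| (I-\pi) p \|_{0,E}^2 \lesssim \| p \|_{0,\mathcal{D}(E)}^2$, and combined with $\|p\|_{0,E} \leq \|p\|_{0,\mathcal{D}(E)}$ the triangle inequality closes the estimate. The main obstacle, though largely bookkeeping, is ensuring that all the constants hidden in $\lesssim$ remain independent of $h$: this requires the polynomial trace inequality with the correct $h_E^{-1/2}$ scaling (standard on shape-regular star-shaped elements via a scaling argument to a reference configuration), and it requires the number of facets per element and the number of elements in $\mathcal{D}(E)$ to be uniformly bounded, both of which follow from the facet-diameter and star-shapedness conditions in \textbf{(A1)}. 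No genuinely new analytic difficulty arises; the result is essentially a stability counterpart of the approximation estimate in Theorem~\ref{prp:clmest}.
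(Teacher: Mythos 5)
Your argument is correct and coincides with the paper's proof, which likewise combines the triangle inequality $\| \pi p \|_{0,E}\le \| \pi p - p \|_{0,E} + \| p \|_{0,E}$ with Theorem~\ref{prp:clmest} and a polynomial trace inequality on each neighbouring element; the paper merely leaves the cancellation of the $h_E$ factor against the $h_E^{-1}$ from the trace inequality implicit. Your spelled-out version, including the bookkeeping on the bounded number of facets and neighbours under \textbf{(A1)}, is exactly what that one-line proof compresses.
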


\begin{proof}
As in Lemma 3.3 of \cite{BLT:2024}, the proof easily follows from the triangle inequality 
$\| \pi p \|_{0,E}\le \| \pi p - p \|_{0,E} + \|  p \|_{0,E}$, together with Theorem \ref{prp:clmest} and a polynomial trace inequality.   \end{proof}

\begin{remark}
We remark that Theorem \ref{prp:clmest} and Lemma \ref{lm:clmcont} are actually valid also for $v\in L^2(\Omega)$ such that $v_{|E}\in V_h^{k,nc}(E)$ for every $E\in\Omega_h$. In particular, in this case, the proof of Lemma \ref{lm:clmcont} requires the application of the Agmon inequality and Proposition \ref{lm:inverse}, instead of the polynomial trace inequality.    
\end{remark}

\subsection{Inf-Sup condition}
The goal of this section is to prove the following inf-sup condition for the discrete problem~\eqref{eq:cip-vem}, namely
\begin{equation} \label{eq:infsup}
\| v_h \|_{\cip} 
\lesssim 
\sup_{z_h \in V_h^{k,nc}(\Omega_h)} \dfrac{\Acip (v_h, z_h)}{\| z_h \|_{\cip}}
\qquad \forall v_h \in V_h^{k,nc}(\Omega_h)\, ,
\end{equation}
where the norm 
$\| \cdot \|_{\cip} $ in~$V^{k,nc}_h(\Omega_h)$ is defined by
\begin{equation} \label{eq:glb_norm_def}
\|v_h\|^2_{\cip} \coloneqq \sum_{E \in \Omega_h} \|v_h\|^2_{\cip, E} \,,
\end{equation}
with 
\begin{equation} \label{eq:loc_nom_def}
\begin{aligned}
\|v_h\|^2_{\cip  , E} 
:= &
\epsilon \, \| \nabla v_h \|^2_{0,E} + 
h \, \| \bb \cdot \nabla \P0 v_h \|^2_{0,E} + 
\sigma \, \| v_h \|^2_{0,E} \\
&+ 
\dfrac{\epsilon}{\delta h} \sum_{e\subset\partial E\cap \Gamma} 
\|\Pi_{k-1}^{0,e}v_h\|_{0,e}^2 +
\| | \bb \cdot \nn^E |^{\frac{1}{2}} \P0 v_h \|^2_{0,\partial E\cap\Gamma_\text{in}}
+
J_h^E(v_h,v_h) \, .
\end{aligned}
\end{equation}

We divide the proof of~\eqref{eq:infsup} in two parts. In the first part (Lemma~\ref{lm:symm-part}), we estimate the diffusion, reaction, and inflow boundary terms in $\| v_h \|_{\cip}^2$ with $\Acip(v_h, v_h)$.
In the second part (Lemma~\ref{lm:adv-term}),
we estimate the convective term in $\| v_h \|_{\cip}^2$. In order to do so, we would like to take $\widetilde w_h$ locally defined by
\[
\widetilde w_h |_{E} \coloneqq h\bb \cdot \nabla \P0 v_h 
\]
as the second argument in $\Acip(\cdot, \cdot)$.
This is not possible, since $\widetilde w_h \not\in V^{k,nc}_h(\Omega_h)$, and we take its Oswald interpolant $\pi\widetilde w_h$ instead. Then, the difference between $\widetilde w_h$ and $\pi\widetilde w_h$ is controlled thanks to the jump term. We combine these two results and conclude the proof of the inf-sup condition~\eqref{eq:infsup} in Theorem~\ref{thm:inf-sup}.

\begin{lemma}\label{lm:symm-part}
Under assumptions \textbf{(A1)}, given $v_h\in V^{k,nc}_h(\Omega_h)$, we have 
\begin{equation*}\label{eq:infsupsymmetry_gbl}
\begin{split}
\Acip(v_h, v_h) \gtrsim & \epsilon 
\| \nablah v_h \|_{0}^2
+
\sigma \| v_h \|_{0}^2
+
\sum_{E\in\Omega_h}\dfrac{\epsilon}{\delta h} \sum_{e\subset\partial E\cap \Gamma} 
\|\Pi_{k-1}^{0,e}v_h\|_{0,e}^2\\
&+\sum_{E\in\Omega_h} \||\bb\cdot\nn^E|^{\frac{1}{2}}\P0 v_h\|_{0,\partial E\cap \Gamma_{{\rm in}}}^2
+ 
J_h(v_h, v_h) 
 \, . 
 \end{split}
\end{equation*}
\end{lemma}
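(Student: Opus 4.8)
The plan is to expand $\Acip(v_h,v_h)=\sum_E\AcipE(v_h,v_h)$ into its six local pieces, identify the contributions that directly reproduce the right-hand side, and absorb the remaining sign-indefinite terms. Two of the target quantities are immediate: the jump form $J_h(v_h,v_h)$ occurs unchanged in $\Acip(v_h,v_h)$, and the standard VEM stability estimates — which follow from the norm equivalence~\eqref{eq:sEh} exactly as in the conforming case and extend to $V_h^{k,nc}(E)$ — give $\epsilon\,a_h(v_h,v_h)\gtrsim\epsilon\|\nablah v_h\|_0^2$ and $\sigma\,c_h(v_h,v_h)\gtrsim\sigma\|v_h\|_0^2$. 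It then remains to treat the advection form $b_h$, the correction $d_h$ and the Nitsche form $\mathcal{N}_h$ jointly, showing that together they recover the boundary penalty and the inflow term without destroying the diffusion coercivity.

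First I would handle $b_h+d_h$. Since $\dd\,\bb=0$ and $\bb\in[W^{1,\infty}(\Omega)]^d$, elementwise integration by parts applied to $w=\P0 v_h$ gives $b_h^E(v_h,v_h)=\tfrac12\int_{\partial E}(\bb\cdot\nn^E)(\P0 v_h)^2\,\mathrm ds$. Summing over all elements, each interior facet $e\in\mathcal{E}_h^0$ is visited from its two adjacent elements; using $\nn^{E'}=-\nn^E$ on $e$, these two visits assemble into $\int_e\bb\cdot[\![\P0 v_h]\!]\{\P0 v_h\}\,\mathrm ds$, which — by a direct expansion of the jump and the average — is exactly the quantity cancelled by the corresponding contribution of $d_h(v_h,v_h)$; this is precisely the role of the correction form $d_h$, forced by the nonconformity. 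What survives from $b_h+d_h$ is therefore only the boundary integral $\tfrac12\int_\Gamma(\bb\cdot\nn)(\P0 v_h)^2\,\mathrm ds$, which I would split over $\Gamma_{\text{in}}$ and $\Gamma_{\text{out}}$ according to the sign of $\bb\cdot\nn$. The nonpositive $\Gamma_{\text{in}}$ part is then combined with the inflow penalty $\langle|\bb\cdot\nn^E|\,\P0 v_h,\P0 v_h\rangle_{\partial E\cap\Gamma_{\text{in}}}$ from $\mathcal{N}_h$: since that penalty dominates the advection contribution by a factor two, one is left with $\tfrac12$ of the inflow term (matching the target up to the hidden constant) plus the nonnegative $\Gamma_{\text{out}}$ part, which is simply discarded.

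The delicate step is the symmetric Nitsche consistency term $-2\epsilon\langle\PZ0P\nabla v_h\cdot\nn^E,v_h\rangle_{\partial E\cap\Gamma}$. Here I would first use that $\PZ0P\nabla v_h\cdot\nn^E\in\Pk_{k-1}(e)$ on each boundary facet, so that $v_h$ may be replaced by $\Pi^{0,e}_{k-1}v_h$ by definition of the $L^2(e)$-orthogonal projection. Then Cauchy--Schwarz, Young's inequality with an $h_E$-weighted parameter $\eta$, a polynomial inverse trace inequality $\|\PZ0P\nabla v_h\|_{0,e}^2\lesssim h_E^{-1}\|\PZ0P\nabla v_h\|_{0,E}^2$, and the contraction property $\|\PZ0P\nabla v_h\|_{0,E}\le|v_h|_{1,E}$ bound this term by $C\,\epsilon\,\eta\,|v_h|_{1,E}^2+\tfrac{\epsilon}{\eta h_E}\sum_{e\subset\partial E\cap\Gamma}\|\Pi^{0,e}_{k-1}v_h\|_{0,e}^2$. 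The first piece is absorbed into the diffusion coercivity and the second into the Nitsche penalty $\tfrac{\epsilon}{\delta h_E}\sum_e\|\Pi^{0,e}_{k-1}v_h\|_{0,e}^2$, with the factor $h_E^{-1}$ and the global $h^{-1}$ in the target norm interchanged freely thanks to the quasi-uniformity in~\textbf{(A1)}.

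The main obstacle is arranging this double absorption simultaneously: it forces $\eta$ small enough that $C\eta$ stays strictly below the diffusion coercivity constant, while at the same time requiring $\eta>\delta$ so that a positive fraction of the penalty survives. Hence the estimate holds provided the Nitsche parameter $\delta$ is chosen small enough (penalty large enough), the threshold depending only on the constants in~\eqref{eq:sEh}, the inverse-trace constant, and the mesh-regularity parameter $\rho$ — the usual constraint of Nitsche's method. Collecting the surviving diffusion, reaction, penalty, inflow and jump contributions then yields the asserted lower bound.
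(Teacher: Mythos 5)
Your proposal is correct and follows essentially the same route as the paper: stability of $a_h$ and $c_h$ via~\eqref{eq:sEh}, elementwise integration by parts of $b_h$ with the interior-facet terms cancelled by $d_h$, recombination of the surviving boundary integral with the inflow penalty in $\mathcal{N}_h$, and absorption of the symmetric Nitsche consistency term into the diffusion coercivity and the penalty after replacing $v_h$ by $\Pi^{0,e}_{k-1}v_h$ (the paper fixes $\delta=\widetilde\alpha_*/(4C_{\rm tr})$, matching your smallness condition on $\delta$). No gaps.
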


\begin{proof}
We take the same $v_h$ in both arguments of $\Acip(\cdot,\cdot)$ and obtain
\[
\Acip(v_h, v_h) = \epsilon \, a_h(v_h,v_h) + b_h(v_h,v_h) + \sigma \, c_h(v_h,v_h) + d_h(v_h,v_h) + \mathcal{N}_h(v_h,v_h) + J_h(v_h,v_h) \, .
\]

The diffusion term is easily estimated using the orthogonality of the projectors and property~\eqref{eq:sEh} of
the stabilization form: 
\[
\epsilon a_h(v_h, v_h)\geq \widetilde\alpha_* \epsilon\, \| \nablah v_h \|^2_{0}, 
\]
where $\widetilde\alpha_* = \min\{1, \alpha_* \}$. For the reaction term, using \eqref{eq:sEh} and the Poincar\'e inequality for $(I - \P0) v_h$, we have
\[
\sigma c_h(v_h, v_h) \geq C_0 \widetilde\alpha_* \sigma \| v_h \|^2_{0} .
\]
for a positive constant $C_0$ independent of $\sigma$ and $h$.
Therefore, we get
\begin{equation}\label{eq:est_ac}
\epsilon a_h(v_h, v_h) + \sigma c_h(v_h, v_h) \geq \widetilde\alpha_* \epsilon\, \| \nablah v_h \|^2_{0} + C_0 \widetilde\alpha_*\sigma \| v_h \|^2_{0} \, .
\end{equation}

For the convective term, by integrating by parts, we obtain
\[
\begin{aligned}
b_h(v_h, v_h) 
 =&
\dfrac{1}{2}
\int_\Omega (\bb \cdot \nablah \Pg v_h) \Pg v_h {\rm d}E
-
\dfrac{1}{2}\int_\Omega \Pg v_h (\bb \cdot \nablah \Pg v_h){\rm d}E \\
&+
\dfrac{1}{2} \sum_{E \in \Omega_h}\int_{\partial E} (\bb \cdot \nn^E) \P0 v_h \P0 v_h {\rm d} s \, \\
=&\dfrac{1}{2} \sum_{E\in \Omega_h} 
\int_{\partial E\cap\Gamma} (\bb \cdot \nn^E) \P0 v_h \P0 v_h {\rm d} s
-d_h(v_h,v_h),
\end{aligned}
\]
where the last step follows from simple algebraic manipulations and from the definition of $d_h(\cdot,\cdot)$.
Combining the terms on $\Gamma_{\rm in}$ in the expression above with those in $\mathcal{N}_h(\cdot,\cdot)$ gives
\[
\begin{aligned}
b_h(v_h,v_h)&+d_h(v_h,v_h)+ \mathcal{N}_h(v_h,v_h) = \dfrac{1}{2} \sum_{E\in\Omega_h} 
\||\bb\cdot\nn^E|^{\frac{1}{2}}\P0 v_h\|_{0,\partial E\cap \Gamma}^2\\
&+  \sum_{E\in\Omega_h}\dfrac{\epsilon}{\delta h_E} \sum_{e\subset\partial E\cap \Gamma} 
\|\Pi_{k-1}^{0,e}v_h\|_{0,e}^2
-2\epsilon \sum_{E\in\Omega_h} \langle \PZ0P\nabla v_h\cdot\nn^E,v_h\rangle_{\partial E\cap\Gamma}\, .
\end{aligned}
\]
For the last term on the right-hand side, we use the estimate 
\[
\begin{aligned}
2\epsilon & \langle \PZ0P\nabla v_h\cdot\nn^E,v_h\rangle_{\partial E\cap\Gamma}
=2\epsilon \sum_{e\subset \partial E\cap\Gamma}\langle \PZ0P\nabla v_h\cdot\nn^E,\Pi_{k-1}^{0,e}v_h\rangle_{e}\\
&\le 2\epsilon\delta h_E\|\PZ0P\nabla v_h\cdot\nn^E\|_{\partial E\cap\Gamma}^2+ \dfrac{\epsilon}{2\delta h_E} \sum_{e\subset \partial E\cap\Gamma}\|\Pi_{k-1}^{0,e}v_h\|_{0,e}^2\\
&\le  2\epsilon\delta C_{\rm tr}\|\PZ0P\nabla v_h\|_{0,E}^2+ \dfrac{\epsilon}{2\delta h_E} \sum_{e\subset \partial E\cap\Gamma} \|\Pi_{k-1}^{0,e}v_h\|_{ 0,e}^2\\
&\le 2\epsilon\delta C_{\rm tr}\|\nabla v_h\|_{0,E}^2+ \dfrac{\epsilon}{2\delta h_E} \sum_{e\subset \partial E\cap\Gamma}\|\Pi_{k-1}^{0,e}v_h\|_{0,e}^2\, ,
\end{aligned}
\]
 where $C_{\rm tr}$ is the inverse trace inequality constant for polynomials. 
Therefore, with the choice, e.g., $\delta=\widetilde\alpha_*/(4 C_{\rm tr})$,  we obtain
\[
\begin{aligned}
b_h(v_h,v_h)+d_h(v_h,v_h)+ \mathcal{N}_h(v_h,v_h)\ge & \dfrac{1}{2} \sum_{E\in\Omega_h} 
\||\bb\cdot\nn^E|^{\frac{1}{2}}\P0 v_h\|_{0,\partial E\cap \Gamma}^2\\
&
+\frac12 \sum_{E\in\Omega_h}\dfrac{\epsilon}{\delta h_E} \sum_{e\subset\partial E\cap \Gamma} 
\|\Pi_{k-1}^{0,e}v_h\|_{0,e}^2
-\dfrac{\widetilde\alpha_*}{2}\epsilon\|\nablah v_h\|_{0}^2
\, .
\end{aligned}
\]
This, together with~\eqref{eq:est_ac}, 
and noting that
$$
\sum_{E\in\Omega_h} \||\bb\cdot\nn^E|^{\frac{1}{2}}\P0 v_h\|_{0,\partial E\cap \Gamma}^2\geq
\sum_{E\in\Omega_h} \||\bb\cdot\nn^E|^{\frac{1}{2}}\P0 v_h\|_{0,\partial E\cap \Gamma_{\rm in}}^2,
$$
gives the result.
\end{proof}

\begin{lemma}\label{lm:adv-term}
Given $v_h\in V_h^{k,nc}(\Omega_h)$, let us define the function
\begin{equation} \label{eq:wh}
w_h \coloneqq \pwh \, ,
\end{equation}
where $\bb_h$ is the $L^2$-projection of $\bb$ in the space of piecewise linear functions $[\Pk_1(\Omega_h)]^d$. Then, under assumptions \textbf{(A1)}, if the mesh size satisfies 
$h>\varepsilon$, 
we have 
\begin{equation}\label{eq:adv-term-control}
\Acip(v_h, w_h) 
\geq
C_1\, h \| \bb \cdot \nablah \Pg v_h \|^2_{0} 
-C_2\, \, \Acip(v_h, v_h) \, ,
\end{equation}
for $C_1,C_2>0$ independent of $\epsilon$ and $h$.
\end{lemma}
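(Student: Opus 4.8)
The plan is to expand $\Acip(v_h,w_h)$ according to~\eqref{eq:AcipE} and show that the advective form $b_h(v_h,w_h)$ produces the desired term $h\|\bb\cdot\nablah\Pg v_h\|_0^2$, while every remaining contribution is either controlled by $J_h(v_h,v_h)$ and the symmetric quantities of Lemma~\ref{lm:symm-part} (hence by $\Acip(v_h,v_h)$), or absorbed into a small fraction of the main term. Setting $\psi\coloneqq\bb_h\cdot\nablah\Pg v_h\in\Pk_k(\Omega_h)$ so that $w_h=h\,\pi\psi$, the first step is to record the stability bounds $\|w_h\|_{0,E}\lesssim h\,\|\psi\|_{0,\mathcal{D}(E)}$ and $|w_h|_{1,E}\lesssim\|\psi\|_{0,\mathcal{D}(E)}$, which follow from Lemma~\ref{lm:clmcont}, the inverse inequality of Proposition~\ref{lm:inverse}, and the quasi-uniformity in \textbf{(A1)}. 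Since $\|\bb-\bb_h\|_{[L^\infty(E)]^d}\lesssim h_E$, one also has $\|\psi\|_0\lesssim\|\bb\cdot\nablah\Pg v_h\|_0+h\,\|\nablah v_h\|_0$, allowing me to pass freely between $\psi$ and the target quantity.

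For the main term I use the identity $\P0(\pi\psi)=\psi-\P0\big((I-\pi)\psi\big)$ on each element (valid because $\psi|_E\in\Pk_k(E)$), so that
\begin{equation*}
b_h(v_h,w_h)=h\sum_{E\in\Omega_h}\int_E(\bb\cdot\nabla\Pg v_h)\,\psi\,{\rm d}E-h\sum_{E\in\Omega_h}\int_E(\bb\cdot\nabla\Pg v_h)\,\P0\big((I-\pi)\psi\big)\,{\rm d}E.
\end{equation*}
In the first sum I replace $\psi=\bb_h\cdot\nabla\Pg v_h$ by $\bb\cdot\nabla\Pg v_h$, the mismatch being $O(h)$ by the bound on $\bb-\bb_h$; this yields $h\|\bb\cdot\nablah\Pg v_h\|_0^2$ up to terms of the form $h^2\|\nablah v_h\|_0\,\|\bb\cdot\nablah\Pg v_h\|_0$. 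The second sum is estimated by Cauchy--Schwarz and Theorem~\ref{prp:clmest}, $\|(I-\pi)\psi\|_{0,E}^2\lesssim h_E\sum_{e\subset\partial E\setminus\Gamma}\|[\![\psi]\!]\|_{0,e}^2$; the key point is that $[\![\psi]\!]$ splits into a jump of $\nabla\Pg v_h$ (controlled by the CIP term in $J_h(v_h,v_h)$) and a jump of $\bb_h$ (of size $O(h)$), whence this contribution is bounded by $J_h(v_h,v_h)^{1/2}h^{1/2}\|\bb\cdot\nablah\Pg v_h\|_0$ plus absorbable remainders. A Young inequality then gives $b_h(v_h,w_h)\ge\tfrac34\,h\|\bb\cdot\nablah\Pg v_h\|_0^2-C\,\Acip(v_h,v_h)$, the $h^2\|\nablah v_h\|_0^2$ leftovers being turned into $\|v_h\|_0^2$ by Proposition~\ref{lm:inverse}.

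Next I bound the auxiliary forms $\epsilon a_h(v_h,w_h)$, $\sigma c_h(v_h,w_h)$, $\mathcal{N}_h(v_h,w_h)$ and $J_h(v_h,w_h)$. Each is estimated by continuity (Cauchy--Schwarz in the respective form, using $J_h(w_h,w_h)\lesssim h\|\psi\|_0^2$ for the last one) and Young's inequality, so that a small multiple of $h\|\bb\cdot\nablah\Pg v_h\|_0^2$ is split off and the remainder is dominated by $\epsilon\|\nablah v_h\|_0^2$, $\sigma\|v_h\|_0^2$, the boundary quantities on $\Gamma$, and $J_h(v_h,v_h)$, all controlled by $\Acip(v_h,v_h)$ through Lemma~\ref{lm:symm-part}. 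The hypothesis $h>\epsilon$ enters precisely here: the diffusion cross-term $\epsilon a_h(v_h,w_h)\lesssim\epsilon\|\nablah v_h\|_0\|\psi\|_0$ produces, after Young, a factor $\epsilon^2/h\le\epsilon$, which allows its absorption into $\epsilon\|\nablah v_h\|_0^2$; the same mechanism handles the $\epsilon$-terms in $\mathcal{N}_h$.

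The delicate point, and what I expect to be the main obstacle, is the nonconforming form $d_h(v_h,w_h)=-\tfrac12\sum_E\int_{\partial E\setminus\Gamma}\bb\cdot[\![\Pg v_h]\!]\{\Pg w_h\}\,{\rm d}s$. A Cauchy--Schwarz reduces matters to controlling $\|[\![\Pg v_h]\!]\|_{0,e}$, since $\|\{\Pg w_h\}\|_{0,e}\lesssim h^{1/2}\|\psi\|_{0,\mathcal{D}(E)}$ follows from the $w_h$ bounds and a trace inequality. Writing $[\![\Pg v_h]\!]=[\![v_h]\!]-[\![(I-\P0)v_h]\!]$, the second jump is controlled by $J_h(v_h,v_h)^{1/2}$ through the stabilization part $\gamma_E h_E\,\Stab\big((I-\P0)v_h,(I-\P0)v_h\big)$ of $J_h$, a trace inequality, and the Poincar\'e inequality for $(I-\P0)v_h$. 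The genuine difficulty is the function jump $[\![v_h]\!]$ of the virtual function itself: here I must exploit that the nonconformity forces $\Pi^{0,e}_{k-1}\big([\![v_h]\!]\cdot\nn^e\big)=0$, so that $[\![v_h]\!]$ is $\Pk_{k-1}(e)$-orthogonal, and combine this with the tangential information encoded in $J_h(v_h,v_h)$ to show that $\|[\![\Pg v_h]\!]\|_{0,e}$ is itself controlled by $J_h(v_h,v_h)^{1/2}$ up to absorbable terms. Once this jump estimate is in place, $d_h(v_h,w_h)$ is bounded by $J_h(v_h,v_h)^{1/2}h^{1/2}\|\bb\cdot\nablah\Pg v_h\|_0$ and absorbed as before; collecting all the estimates and choosing the Young parameters small enough yields~\eqref{eq:adv-term-control}.
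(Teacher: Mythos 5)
Your overall architecture coincides with the paper's: you expand $\Acip(v_h,w_h)$ into the six constituent forms, extract the leading term $h\|\bb\cdot\nablah\Pg v_h\|_0^2$ from $b_h(v_h,w_h)$ by comparing $\P0 w_h$ with $h\,\bb_h\cdot\nabla\Pg v_h$, control the Oswald error through Theorem~\ref{prp:clmest} with the jump split into a $[\![\nablah\Pg v_h]\!]$ part (absorbed by $J_h(v_h,v_h)$) and a $\bb-\bb_h$ part of size $O(h)$, and handle $\epsilon a_h$, $\sigma c_h$, $J_h$ and $\mathcal N_h$ by Cauchy--Schwarz, the stability bounds on $w_h$, the hypothesis $h>\epsilon$, and Young's inequality, concluding via Lemma~\ref{lm:symm-part}. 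All of this matches the paper's estimates of the terms $T_1$--$T_5$.

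The gap is in your treatment of $d_h(v_h,w_h)$, which you yourself flag as the ``main obstacle'' and do not actually resolve. Your plan is to prove that $\|[\![\Pg v_h]\!]\|_{0,e}$ is controlled by $J_h(v_h,v_h)^{1/2}$, using the moment conditions $\Pi^{0,e}_{k-1}([\![v_h]\!]\cdot\nn^e)=0$ and the ``tangential information'' in $J_h$. This cannot work: $J_h$ penalizes only $\gamma_e h_e^2\,[\![\nablah\Pg v_h]\!]^2$ and the elementwise quantity $\gamma_E h_E\,\Stab\bigl((I-\P0)v_h,(I-\P0)v_h\bigr)$; neither controls the $L^2(e)$-norm of the jump of the function $\Pg v_h$ itself, which for a generic $v_h\in V_h^{k,nc}(\Omega_h)$ scales like $h^{1/2}\bigl(|v_h|_{1,E}+|v_h|_{1,E'}\bigr)$ and is not small in the relevant scaling; moreover $J_h$ vanishes identically on a pair of elements where $\bb\equiv 0$ while $[\![\Pg v_h]\!]$ does not, so the claimed inequality is simply false. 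No such smallness is needed. The paper estimates this term directly: by the triangle inequality and $|\bb\cdot\nn^E|\le 1$,
\[
\|\bb\cdot[\![\Pg v_h]\!]\|_{0,e}\le \bigl\| |\bb\cdot\nn^E|^{\frac12}\P0 v_h\bigr\|_{0,e}+\bigl\| |\bb\cdot\nn^{E'}|^{\frac12}\Pi^{0,E'}_k v_h\bigr\|_{0,e}\,,
\]
while $\|\{\Pg w_h\}\|_{0,e}\lesssim h^{-1/2}\|w_h\|_{0,E\cup E'}\lesssim h^{1/2}\|\bb_h\cdot\nablah\Pg v_h\|_{0,\mathcal D(E\cup E')}$ by a polynomial trace inequality and~\eqref{eq:pi_estimate}; the resulting products are then absorbed, together with all the other terms, by Young's inequality and estimate~\eqref{falcao} in the final collection. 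Replacing your jump-control step by this direct bound is what is needed to close the argument.
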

\begin{proof}
From Lemma \ref{lm:clmcont}, $w_h$ defined in~\eqref{eq:wh} satisfies the following estimate, which will be used throughout the rest of the proof:
\begin{equation}\label{eq:pi_estimate}
\|w_h\|_{0,E}
\lesssim
h\| \bb_h \cdot \nablah \Pg v_h \|_{0,\mathcal D(E)} \, ,
\end{equation}
We proceed element by element. 
By definition of the local bilinear form $\AcipE(\cdot, \cdot)$, we have that
\begin{equation}\label{eq:new_loc_A}
\begin{aligned}
\AcipE (v_h, w_h) 
& =
\epsilon \, a_h^E (v_h, w_h) 
+ J_h^E (v_h, w_h) 
  +
\sigma \, c_h^E(v_h, w_h)\\ 
& \quad
+
\mathcal{N}^E_h(v_h, w_h)  + 
b_h^E (v_h, w_h)
+
d_h^E (v_h, w_h)\\
& =: T_1 + T_2 + T_3 + T_4 + T_5 + T_6\, .
\end{aligned}
\end{equation}
We estimate each of these six terms separately.\\
{\it Estimate of} $\mathbf{(T_1).}$ 
Using the Cauchy-Schwarz inequality, the properties of $a_h(\cdot, \cdot)$, the inverse inequality for virtual functions, estimate~\eqref{eq:pi_estimate}, and recalling that $\epsilon < h$, we get
\begin{equation} \label{eq:ah_infsup}
\begin{aligned}
T_1=\epsilon \, a_h^E (v_h, w_h) 
& \geq
- \epsilon \, a_h^E(v_h, v_h)^{\frac{1}{2}} \, a_h^E(w_h, w_h)^{\frac{1}{2}} \\
&\gtrsim 
-\epsilon^{\frac{1}{2}} \| \nabla v_h \|_{0,E} \, \epsilon^{\frac{1}{2}} \| \nabla w_h \|_{0,E} \\
 & \gtrsim 
-\epsilon^{\frac{1}{2}} \| \nabla v_h \|_{0,E} \, \epsilon^{\frac{1}{2}} h^{-1} \| w_h \|_{0,E} \\
&\gtrsim 
-\epsilon^{\frac{1}{2}} \| \nabla v_h \|_{0,E} \, h^{\frac{1}{2}} \| \bb_h \cdot \nablah \Pg v_h \|_{0,\mathcal{D}(E)}  \, .
\end{aligned}
\end{equation}
{\it Estimate of} $\mathbf{(T_2).}$
We first split $J_h^E(\cdot,\cdot)$ using the Cauchy-Schwarz inequality
\begin{equation*} \label{eq:J_partial}
T_2=J_h^E(v_h, w_h) 
\geq
-J_h^E(v_h, v_h)^{\frac{1}{2}} \, J_h^E(w_h, w_h)^{\frac{1}{2}} \, .
\end{equation*} 
Using inverse and inverse trace inequalities for polynomials, the inverse inequality for virtual functions, the properties of the $L^2$ projection, and \eqref{eq:pi_estimate}, we get
\begin{equation*} 
\begin{aligned}
J_h^E(w_h, w_h)  
&=
\dfrac{1}{2} \sum_{e \subset \partial E\setminus \Gamma} \gamma_e \int_e \, h_e^2 \, |[\![\nablah \Pg w_h]\!]|^2 \, {\rm d}s
+
\gamma_E \, h_E \, \Stab_J \bigl( ( I - \PN ) w_h, ( I - \PN ) w_h \bigr) \\
& \lesssim
 h \, \| \nablah \Pg w_h \|_{0,\mathcal{D}(E)}^2
+  h \, \| \nabla w_h \|_{0,E}^2 \\
& \lesssim
  h^{-1} \, \|  \Pg w_h \|_{0,\mathcal{D}(E)}^2 +  h^{-1} \| w_h \|_{0,E}^2 \\
& \lesssim
h^{-1} \| w_h \|_{0,\mathcal{D}(E)}^2 
\lesssim
h \| \bb_h \cdot \nablah \Pg v_h \|_{0,\mathcal{D}(\mathcal{D}(E))}^2  \, ,
\end{aligned}
\end{equation*}
from which we conclude
\begin{equation}\label{eq:Jhinfsup}
T_2\gtrsim -J_h^E(v_h, v_h)^{\frac{1}{2}}
h^{\frac{1}{2}}\| \bb_h \cdot \nablah \Pg v_h \|_{0,\mathcal{D}(\mathcal{D}(E))}\, .
\end{equation}

{\it Estimate of} $\mathbf{(T_3).}$
From the properties of $c_h^E(\cdot, \cdot)$ and~\eqref{eq:pi_estimate}, we get 
\begin{equation} \label{eq:chinfsup}
\begin{aligned}
T_3=\sigma c_h^E(v_h, w_h) 
&\gtrsim  
- \sigma \| v_h \|_{0,E} \, \| w_h \|_{0,E} \\
&\gtrsim
- \| v_h \|_{0,E} \,  h^{\frac{1}{2}} \| \bb_h \cdot \nablah \Pg v_h \|_{0,\mathcal{D}(E)} \, . 
\end{aligned}
\end{equation}
where we used $h^{\frac{1}{2}} \lesssim 1$ to simplify later developments.

\noindent
{\it Estimate of} $\mathbf{(T_4).}$
For the Nitsche term, we have to control four different terms:
\[
\begin{aligned}
T_4=\mathcal{N}^E_h(v_h,w_h) & =
-  \epsilon \langle \PZ0P\nabla v_h \cdot \nn^E, w_h \rangle_{\partial E\cap\Gamma}
-  \epsilon \langle v_h, \PZ0P\nabla  w_h \cdot \nn^E \rangle_{\partial E\cap\Gamma}
  \\
&\quad + \dfrac{\epsilon}{\delta h_E}\sum_{e\subset\partial E\cap\Gamma} \langle \Pi_{k-1}^{0,e} v_h, \Pi_{k-1}^{0,e} w_h \rangle_{e}    
+ 
\langle \vert \bb \cdot \nn^E \vert \P0 v_h, \P0 w_h \rangle_{\partial E\cap\Gamma_{\rm in}} \\
& \eqqcolon
\eta_{\mathcal{N}_1} + \eta_{\mathcal{N}_2} + \eta_{\mathcal{N}_3} + \eta_{\mathcal{N}_4} \, .
\end{aligned}
\]
We remark that $\mathcal{N}^E_h(\cdot,\cdot)$ is different from zero only if $E$ has at least one facet on $\Gamma$.\\
For $\eta_{\mathcal{N}_1}$, we use Chauchy-Schwarz inequality, the inverse trace inequality for polynomials and for virtual functions, esitmate~\eqref{eq:pi_estimate}, and the assumption $\epsilon < h$, and derive
\begin{equation}\label{eq:eta1}
\begin{aligned}
\eta_{\mathcal{N}_1}
& \gtrsim
- \epsilon \| \PZ0P\nabla v_h \|_{0,\partial E\cap\Gamma} \| w_h \|_{0,\partial E\cap\Gamma} \\
& \gtrsim
- \epsilon \, h^{-1} \| \PZ0P\nabla v_h \|_{0,E} \| w_h \|_{0,E} \\
& \gtrsim
- \epsilon^{\frac{1}{2}} \| \nabla v_h \|_{0,E} \, h^{\frac{1}{2}} \, \| \bb_h \cdot \nablah \Pg v_h \|_{0,\mathcal{D}(E)}\, .
\end{aligned}
\end{equation}
For $\eta_{\mathcal{N}_2}$, we use the orthogonality of $\Pi^{0,e}_{k-1}$, the Cauchy-Schwarz inequality, inverse trace and inverse inequalities, and $\epsilon < h$, to obtain
\begin{equation}\label{eq:eta2}
\begin{aligned}
\eta_{\mathcal{N}_2}
& =
-  \epsilon \sum_{e\subset\partial E\cap\Gamma}
\langle \Pi_{k-1}^{0,e} v_h, \PZ0P\nabla w_h \cdot \nn^E \rangle_{e} \\
& \gtrsim
- \epsilon \sum_{e\subset\partial E\cap\Gamma}\| \Pi_{k-1}^{0,e} v_h \|_{0,e} \| \PZ0P\nabla w_h \|_{0,\partial E\cap\Gamma} \\
& \gtrsim
- \epsilon \, h^{-\frac{1}{2}} \sum_{e\subset\partial E\cap\Gamma}\| \Pi_{k-1}^{0,e} v_h \|_{0,e} \| \PZ0P\nabla w_h \|_{0,E} \\
& \gtrsim
- \epsilon \, h^{-\frac{3}{2}} \sum_{e\subset\partial E\cap\Gamma}\| \Pi_{k-1}^{0,e} v_h \|_{0,e} \| w_h \|_{0,E} \\
&\gtrsim
- \left(\dfrac{\epsilon}{\delta h}\right)^{\frac{1}{2}} \sum_{e\subset\partial E\cap\Gamma}\| \Pi_{k-1}^{0,e} v_h \|_{0,e} \, h^{\frac{1}{2}} \| \bb_h \cdot \nablah \Pg v_h \|_{0,\mathcal{D}(E)} \, .
\end{aligned}
\end{equation}
For $\eta_{\mathcal{N}_3}$, thanks to orthogonality, we remove the $\Pi^{0,e}_{k-1}$ projection on the second term and we proceed similarly to the previous cases:
\begin{equation}\label{eq:eta3}
\begin{aligned}
\eta_{\mathcal{N}_3}
& = 
\dfrac{\epsilon}{\delta h}\sum_{e\subset\partial E\cap\Gamma} \langle \Pi_{k-1}^{0,e} v_h, \ w_h \rangle_{e} 
\gtrsim 
- \dfrac{\epsilon}{\delta h} \sum_{e\subset\partial E\cap\Gamma}\| \Pi_{k-1}^{0,e} v_h \|_{0,e} \| w_h \|_{0,\partial E\cap\Gamma} \\
& \gtrsim
- h^{-1}
\left(\dfrac{\epsilon}{\delta h}\right)^{\frac{1}{2}}
\sum_{e\subset\partial E\cap\Gamma}\| \Pi_{k-1}^{0,e} v_h \|_{0,e} \| w_h \|_{0,E} \\
& \gtrsim
- \left(\dfrac{\epsilon}{\delta h}\right)^{\frac{1}{2}} \sum_{e\subset\partial E\cap\Gamma}\| \Pi_{k-1}^{0,e} v_h \|_{0,e} \, h^{\frac{1}{2}} \| \bb_h \cdot \nablah \Pg v_h \|_{0,\mathcal{D}(E)} \, ,
\end{aligned} 
\end{equation}
where we have used again $h\lesssim 1$.\\
For $\eta_{\mathcal{N}_4}$, using $| \bb \cdot \nn^E |\le 1$, we have that 
\begin{equation}\label{eq:eta4}
\begin{aligned}
\eta_{\mathcal{N}_4}
& \gtrsim
- \| | \bb \cdot \nn^E |^{\frac{1}{2}} \P0 v_h \|_{0,\partial E\cap\Gamma_{\rm in}} \, \| \P0 w_h \|_{0,\partial E\cap\Gamma_{\rm in}} \\
& \gtrsim
- h^{-\frac{1}{2}} \, \| | \bb \cdot \nn^E |^{\frac{1}{2}} \P0 v_h \|_{0,\partial E\cap\Gamma_{\rm in}} \, \| \P0 w_h \|_{0,E} \\
& \gtrsim
- \| | \bb \cdot \nn^E |^{\frac{1}{2}} \P0 v_h \|_{0,\partial E\cap\Gamma_{\rm in}} \,  h^{\frac{1}{2}} \, \| \bb_h \cdot \nablah \Pg v_h \|_{0,\mathcal D(E)}\, .
\end{aligned}
\end{equation}
Gathering~\eqref{eq:eta1}--\eqref{eq:eta4} gives
\begin{equation}\label{eq:Nhinfsup}
\begin{aligned}
T_4 
\gtrsim 
 &- \Bigl( 
 \epsilon^{\frac{1}{2}} \| \nabla v_h \|_{0,E} 
+
\left(\dfrac{\epsilon}{\delta h}\right)^{\frac{1}{2}} \sum_{e\subset\partial E\cap\Gamma}\| \Pi_{k-1}^{0,e} v_h \|_{0,e} 
+
\| | \bb \cdot \nn^E |^{\frac{1}{2}} \P0 v_h \|_{0,\partial E\cap\Gamma_{\rm in}} 
\Bigr) \\
&\quad \cdot h^{\frac{1}{2}}  \| \bb_h \cdot \nablah \Pg v_h \|_{0,\mathcal D(E)} \, .
\end{aligned}
\end{equation}
{\it Estimate of} $\mathbf{(T_5).}$ 
The definition of the bilinear form $b_h(\cdot, \cdot)$ implies that
\begin{equation} \label{eq:b_split}
\begin{aligned}
T_5=b^E_h(v_h,w_h) 
&=
\bigl( \bb \cdot \nabla \P0 v_h, \, \P0 w_h \bigr)_{0,E} \\
& = 
\bigl( \bb \cdot \nabla \P0 v_h, \, w_h \bigr)_{0,E}  
+
\bigl( \bb \cdot \nabla \P0 v_h, \, ( \P0 - I ) w_h \bigr)_{0,E} \\
& =
\bigl( \bb \cdot \nabla \P0 v_h, \, h \bb_h \cdot \nabla \P0  v_h \bigr)_{0,E} \\
& \quad +        
\bigl( \bb \cdot \nabla \P0 v_h, \, w_h - h\bb_h \cdot \nabla \P0 v_h \bigr)_{0,E} \\
& \quad +        
\bigl(\bb\cdot \nabla\P0 v_h, \, (\P0 - I) w_h\bigr)_{0,E} \\
& \eqqcolon
\eta_{\bb_1} + \eta_{\bb_2} + \eta_{\bb_3} \, .
\end{aligned}
\end{equation}
For $\eta_{\bb_1}$, we add and subtract $(\bb \cdot \nabla \P0 v_h, h \bb\cdot \nabla \P0 v_h)_{0,E}$ and we use the Cauchy-Schwarz inequality to obtain
\begin{equation}\label{eq:bhinfsup1}
\begin{aligned}
\eta_{\bb_1} 
&=
(\bb \cdot \nabla \P0 v_h, \wh)_{0,E} \\
&=
h \, \| \bb \cdot \nabla \P0 v_h \|^2_{0,E}
+ 
(\bb \cdot \nabla \P0 v_h, h (\bb_h - \bb)\cdot \nabla \P0 v_h)_{0,E}\\
&\geq
h \, \| \bb \cdot \nabla \P0 v_h \|^2_{0,E}
- C\,h^{\frac{1}{2}} \| \bb \cdot \nabla \P0 v_h \|_{0,E} \, h^{\frac{1}{2}} | \bb |_{[W^{1,\infty}(E)]^d} h \| \nabla \P0 v_h \|_{0,E}\\
&\geq
h \, \| \bb \cdot \nabla \P0 v_h \|^2_{0,E}
- C\, h^{\frac{1}{2}} \| \bb \cdot \nabla \P0 v_h \|_{0,E} \, h^{\frac{1}{2}} | \bb |_{[W^{1,\infty}(E)]^d} \| v_h \|_{0,E}\, ,
\end{aligned}
\end{equation}
where in the third step we have used $\|\bb_h-\bb\|_{[L^\infty(E)]^d}\lesssim h| \bb |_{[W^{1,\infty}(E)]^d}$. \\
For $\eta_{\bb_2}$, recalling the definition of $w_h$ and using the Young inequality to split the two terms, we get
\begin{equation}\label{eq:adv-eq-xi}
\begin{aligned}
\eta_{\bb_2} & = h \bigl ( \bb\cdot \nabla \P0 v_h, (\pi - I) (\bb_h \cdot \nabla \P0  v_h) \bigr)_{0,E} \\
& \geq
- \dfrac{h}{2} \| \bb\cdot \nabla \P0 v_h \|^2_{0,E} 
- \dfrac{h}{2} \| (\pi - I) (\bb_h \cdot \nabla \P0 v_h) \|^2_{0,E} \, .
\end{aligned}
\end{equation}
For the second term on the right-hand side, we use the fact that $\bb_h \cdot \nabla \Pi^0_k v_h \in \mathbb P_k(\Omega_h)$. Theorem~\ref{prp:clmest} gives
\begin{equation}\label{eq:xi-sum}
h \| (\pi - I) (\bb_h \cdot \nabla \Pg v_h) \|_{0,E}^2 
\lesssim
h^2 \! \sum_{e \subset \partial E\setminus\Gamma} \| [\![\bb_h \cdot \nablah \Pg v_h]\!] \|_{0,e}^2 
\, .
\end{equation}
The triangular inequality and the definition of the jump bilinear form give
\begin{equation}\label{eq:xi-eq}
\begin{aligned}
h^2 \! \sum_{e \subset \partial E\setminus\Gamma} \| [\![\bb_h \cdot \nablah \Pg v_h]\!] \|_{0,e}^2 
& \lesssim 
h^2 \! \sum_{e \subset \partial E\setminus\Gamma} \| [\![( \bb_h - \bb ) \cdot \nablah \Pg v_h]\!] \|_{0,e}^2
+
h^2 \! \sum_{e \subset \partial E\setminus\Gamma} \| [\![\bb \cdot \nablah \Pg v_h]\!] \|_{0,e}^2 \\
& \lesssim 
h^2 \! \sum_{e \subset \partial E\setminus\Gamma} \| [\![( \bb_h - \bb ) \cdot \nablah \Pg v_h]\!] \|_{0,e}^2
+
h^2 \! \sum_{e \subset \partial E\setminus\Gamma} \gamma_e^2  \| [\![ \nablah \Pg v_h]\!] \|_{0,e}^2 \\
& \lesssim 
h^2 \! \sum_{e \subset \partial E\setminus\Gamma} \| [\![( \bb_h - \bb ) \cdot \nablah \Pg v_h]\!] \|_{0,e}^2
+
\sum_{E'\subset \mathcal{D}(E)}J_h^{E'} (v_h, v_h)\ ,
\end{aligned}
\end{equation}
where we used that $\gamma^2_e\le \gamma_e$ (since $\gamma_e\le 1$, see \eqref{eq:beta-scaling}).
On each $e$, the argument in the first sum on the right-hand side of the previous inequality is controlled using the trace inequality and standard estimates on $\bb \in [W^{1,\infty}(\Omega)]^d$:
\begin{equation}
\label{eq:xi-eq-2}
\begin{aligned}
h^2\| [\![(\bb_h -\bb)\cdot \nablah \Pg v_h]\!] \|_{0,e}^2
& \lesssim 
h^4  | \bb |^2_{[W^{1,\infty}(E\cup E')]^d} h^{-1}\|  \nablah \Pg v_h  \|_{0,E\cup E'}^2  \\
& \lesssim
h  | \bb |^2_{[W^{1,\infty}(E\cup E')]^d} \|  \Pg v_h  \|_{0,E\cup E'}^2  \\
& \lesssim
h  | \bb |^2_{[W^{1,\infty}(E\cup E')]^d} \| v_h  \|_{0,E\cup E'}^2\, ,
\end{aligned}
\end{equation}
where $E$ and $E'$ are the two elements sharing the edge $e$.
Combining \eqref{eq:adv-eq-xi} with \eqref{eq:xi-eq-2}, we obtain for $\eta_{\bb_2}$
\begin{equation} \label{eq:bhinfsup2}
\begin{aligned}
\eta_{\bb_2} 
 \geq & - 
\dfrac{h}{2} \| \bb \cdot \nabla \P0 v_h \|^2_{0,E}\\
&- C\Big(
h | \bb |^2_{[W^{1,\infty}(\mathcal{D}(E))]^d} \| v_h \|_{0,\mathcal{D}(E)}^2
+\sum_{E'\subset \mathcal{D}(E)}J_h^{E'} (v_h, v_h)
\Big) \, .
\end{aligned}
\end{equation}
It remains to control $\eta_{\bb_3}$. Since $\bb_h\in [\Pk_1(E)]^d$, it holds $\bigl( \bb_h \cdot \nabla \P0 v_h, (\P0 - I) w_h \bigr)_{0,E}= 0$. 
Hence we have 
\begin{equation} \label{eq:bhinfsup3}
\begin{aligned}
\eta_{\bb_3} & = \bigl( (\bb - \bb_h) \cdot \nabla \P0 v_h, (\P0 - I) w_h \bigr)_{0,E} \\ 
& \gtrsim 
- \| (\bb - \bb_h) \cdot \nabla \P0 v_h \|_{0,E} \, \| h\pi(\bb_h\cdot \nabla\P0 v_h) \|_{0,E} \\
& \gtrsim 
- | \bb |_{[W^{1,\infty}(E)]^d} h \| \nabla \P0 v_h \|_{0,E} \, h \| \bb_h \cdot \nablah \Pg v_h \|_{0,\mathcal{D}(E)}  \\
& \gtrsim
-    | \bb |_{[W^{1,\infty}(E)]^d} \| v_h \|_{0.\mathcal D(E)}^2 \, . 
\end{aligned}
\end{equation}
Collecting \eqref{eq:bhinfsup1}, \eqref{eq:bhinfsup2} and \eqref{eq:bhinfsup3}, from \eqref{eq:b_split} we get 
\begin{equation}\label{eq:advt-term-est-E}
\begin{aligned}
T_5 
\geq
\dfrac{h}{2} \, & \| \bb \cdot \nabla \P0 v_h \|^2_{0,E} 
- C\Big(
\sum_{E'\subset \mathcal{D}(E)}J_h^{E'} (v_h, v_h) \\
&+ h^{\frac{1}{2}} \| \bb \cdot \nabla \P0 v_h \|_{0,E} \, h^{\frac{1}{2}} | \bb |_{[W^{1,\infty}(E)]^d} \| v_h \|_{0,E}  \\
&    
+ h | \bb |^2_{[W^{1,\infty}(\mathcal{D}(E))]^d} \| v_h \|_{0,\mathcal{D}(E)}^2
+    | \bb |_{[W^{1,\infty}(E)]^d}  \| v_h \|_{0,\mathcal{D}(E)}^2 
\Big) \, .
\end{aligned}
\end{equation}
{\it Estimate of} $\mathbf{(T_6).}$ 
The last term that we have to estimate is related to $d_h(\cdot,\cdot)$. We use the Cauchy-Schwarz and the trace inequality for polynomials: 
\begin{equation}
\begin{aligned}
T_6&=d^E_h(v_h, w_h) 
= 
-\dfrac{1}{2} 
\sum_{e \subset \partial E \setminus \Gamma} \int_e \bb \cdot [\![\Pg v_h]\!] \{\Pg w_h\} {\rm d}s \\
& \gtrsim
- \sum_{e \subset \partial E \setminus \Gamma}
\| \bb \cdot [\![\Pg v_h]\!] \|_{0,e}
\| \{\Pg w_h\} \|_{0,e} \\
& \gtrsim
- \sum_{e \subset \partial E \setminus \Gamma}
\left(\| | \bb \cdot \nn^E |^{\frac{1}{2}} \P0 v_h \|_{0,e}
+\| | \bb \cdot \nn^{E'} |^{\frac{1}{2}} \Pi_k^{0,E'} v_h \|_{0,e}
\right)
h^{-\frac{1}{2}} \| w_h\|_{0,E \cup E'} \\
& \gtrsim
- \sum_{e \subset \partial E \setminus \Gamma}
\left(\| | \bb \cdot \nn^E |^{\frac{1}{2}} \P0 v_h \|_{0,e}
+\| | \bb \cdot \nn^{E'} |^{\frac{1}{2}} \Pi_k^{0,E'} v_h \|_{0,e} \right) h^{\frac{1}{2}} \| \bb_h\cdot\nablah \Pg v_h\|_{0,\mathcal{D}(E \cup E')}
 \, .
\end{aligned}
\end{equation}

\noindent
By collecting the estimates of all six terms and adding over all elements, we obtain
\begin{equation}\label{eq:almost-done}
\begin{aligned}
\Acip & (v_h, w_h) 
\geq
\frac{h}{2} \| \bb \cdot \nablah \Pg v_h \|_{0}^2
- C \Big(
\sum_{E \in \Omega_h} \big( \epsilon^{\frac{1}{2}} \| \nabla v_h \|_{0,E} +  J_h^E(v_h, v_h)^{\frac{1}{2}}  +  \| v_h \|_{0,E}  \\
& 
+
\left(\dfrac{\epsilon}{\delta h}\right)^{\frac{1}{2}} \sum_{e\subset\partial E\cap\Gamma}\| \Pi_{k-1}^{0,e} v_h \|_{0,e}  
+
\| | \bb \cdot \nn^E |^{\frac{1}{2}} \P0 v_h \|_{0,\partial E\cap\Gamma_{\rm in}} \big)    h^{\frac{1}{2}} \| \bb_h \cdot \nabla \P0 v_h \|_{0,E} \\
& 
+  J_h(v_h, v_h)   +
\sum_{E \in \Omega_h} \left(h | \bb |^2_{[W^{1,\infty}(E)]^d}+    | \bb |_{[W^{1,\infty}(E)]^d}\right) \| v_h \|_{0,E}^2 \\
& + 
\sum_{E \in \Omega_h}  h^{\frac{1}{2}} \| \bb_h \cdot \nabla \P0 v_h \|_{0,E} \, h^{\frac{1}{2}} | \bb |_{[W^{1,\infty}(E)]^d} \| v_h \|_{0,E}
\Big)\, .  
\end{aligned}
\end{equation}
Above, we have also used the property that, due to assumption \textbf{(A1)}, summing over the elements, each element is counted only a uniformly bounded number of times, even when the terms involve norms on $\mathcal{D}(E)$ or $\mathcal{D}(\mathcal{D}(E))$.

We now notice that the triangular inequality, standard approximation results, and an inverse estimate give
\begin{equation}\label{falcao}
h^{\frac{1}{2}} \| \bb_h \cdot \nabla \P0 v_h \|_{0,E}
\lesssim
h^{\frac{1}{2}}  \left( \| \bb \cdot \nabla \P0 v_h \|_{0,E}
+
| \bb |_{[W^{1,\infty}(E)]^d} \| v_h \|_{0,E} \right).
\end{equation}
Hence, from \eqref{eq:almost-done}, using~\eqref{falcao} and the Young inequality (with suitable constants) for the first and the last summations on the right-hand side, we get 
\begin{equation*}
    \begin{aligned}
\Acip(v_h, w_h) 
\geq  &C_1\, h \| \bb \cdot \nablah \Pg v_h \|_{0}^2 - C_2\,\Big(\epsilon \| \nablah v_h \|_{0}^2  +   \sigma\| v_h \|_{0}^2 + \sum_{E\in\Omega_h}\dfrac{\epsilon}{\delta h_E} \sum_{e\subset\partial E\cap \Gamma} 
\|\Pi_{k-1}^{0,e}v_h\|_{0,e}^2\\
& \qquad\quad
+\sum_{E\in\Omega_h} \||\bb\cdot\nn^E|^{\frac{1}{2}}\P0 v_h\|_{0,\partial E\cap \Gamma_{\text{in}}}^2
+ J_h(v_h, v_h)\Big) 
\, .  
\end{aligned}
\end{equation*}
From Lemma \ref{lm:symm-part}, we now obtain
\[
\Acip(v_h, w_h) 
\geq
C_1\, h \| \bb \cdot \nablah \Pg v_h \|^2_0 
-C_2\, \, \Acip(v_h, v_h) \, .
\]
\end{proof}

Whenever $h>\varepsilon$, we also need the following estimate.
\begin{lemma}\label{lm:whvh}
Assume that $h>\varepsilon$. For any $v_h \in V^{k,nc}(\Omega_h)$, let $w_h$ be defined as in~\eqref{eq:wh}. Then,
\[
\| w_h \|_\cip \lesssim\| v_h \|_\cip\, ,
\]
with hidden constant independent of $\epsilon$, $h$, and~$v_h$.
\end{lemma}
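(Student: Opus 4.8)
The plan is to bound each of the six contributions to $\|w_h\|^2_{\cip,E}$ separately and then sum over the mesh, exploiting the uniformly bounded overlap of the patches $\mathcal{D}(E)$ and $\mathcal{D}(\mathcal{D}(E))$ guaranteed by \textbf{(A1)}. The single tool that drives every estimate is the bound \eqref{eq:pi_estimate}, $\|w_h\|_{0,E}\lesssim h\|\bb_h\cdot\nablah\Pg v_h\|_{0,\mathcal{D}(E)}$, which follows from Lemma~\ref{lm:clmcont}: it reduces all the $L^2(E)$-based pieces of the norm to the single master quantity $h\|\bb_h\cdot\nablah\Pg v_h\|_{0,\mathcal{D}(E)}$. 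At the very end I would translate this quantity back into the norm of $v_h$ via \eqref{falcao}, namely $h^{1/2}\|\bb_h\cdot\nabla\Pg v_h\|_{0,E}\lesssim h^{1/2}\bigl(\|\bb\cdot\nabla\Pg v_h\|_{0,E}+|\bb|_{[W^{1,\infty}(E)]^d}\|v_h\|_{0,E}\bigr)$, whose first summand is exactly the convective term of $\|v_h\|_{\cip}$, and whose (squared) second summand $h|\bb|^2\|v_h\|^2_{0,E}$ is absorbed into $\sigma\|v_h\|^2_{0,E}$ since $\sigma>0$ is a fixed constant and $h\lesssim1$.

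For the reaction term $\sigma\|w_h\|^2_{0,E}$ the estimate \eqref{eq:pi_estimate} applies directly. For the two gradient terms, $\epsilon\|\nabla w_h\|^2_{0,E}$ and $h\|\bb\cdot\nabla\Pg w_h\|^2_{0,E}$, I would first absorb the derivative with an inverse inequality — Proposition~\ref{lm:inverse} for the virtual function $w_h$, and the standard polynomial inverse inequality together with the $L^2$-stability of $\Pg$ for $\nabla\Pg w_h$ — turning each into a multiple of $h^{-2}\|w_h\|^2_{0,E}$, and then invoke \eqref{eq:pi_estimate}; here the assumption $\epsilon<h$ is precisely what converts the $\epsilon$-weight of the diffusion term into the desired $h$-weight. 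The three boundary contributions (the two Nitsche-type terms carrying the weight $\epsilon/(\delta h)$ and the inflow term) are treated analogously: I bound the facet $L^2$-norms of $\Pi^{0,e}_{k-1}w_h$ and $\Pg w_h$ by $\|w_h\|_{0,\partial E}$, use $|\bb\cdot\nn^E|\le1$ (recall $\|\bb\|_\infty=1$) and a trace inequality to pass to $h^{-1}\|w_h\|^2_{0,E}$, and again apply \eqref{eq:pi_estimate}; the surplus power of $h^{-1}$ is compensated by the $\epsilon/(\delta h)$ weight together with $\epsilon<h$ and $h\lesssim1$, while $\delta$ is a fixed constant.

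The jump term $J_h^E(w_h,w_h)$ need not be reworked from scratch: exactly this quantity was already estimated inside the treatment of $T_2$ in the proof of Lemma~\ref{lm:adv-term}, where it was shown that $J_h^E(w_h,w_h)\lesssim h\|\bb_h\cdot\nablah\Pg v_h\|^2_{0,\mathcal{D}(\mathcal{D}(E))}$, so I would simply reuse that bound. Collecting the six estimates, every contribution to $\|w_h\|^2_{\cip,E}$ is dominated by $h\|\bb_h\cdot\nablah\Pg v_h\|^2_{0,\omega}$ with $\omega\in\{\mathcal{D}(E),\mathcal{D}(\mathcal{D}(E))\}$; applying \eqref{falcao} on $\omega$ and summing over $E\in\Omega_h$ with the finite overlap of the patches yields $\|w_h\|^2_{\cip}\lesssim h\|\bb\cdot\nablah\Pg v_h\|^2_{0}+\|v_h\|^2_{0}\lesssim\|v_h\|^2_{\cip}$. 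I expect the main obstacle to be bookkeeping rather than any single sharp inequality: one must check that after every inverse or trace step the residual is one of the terms actually present in $\|v_h\|_{\cip}$, and in particular that the consistency error created by replacing $\bb$ with its piecewise-linear projection $\bb_h$ — which produces a term of the form $\|v_h\|^2_{0}$ rather than a convective one — is absorbable; this is exactly where the hypotheses $\sigma>0$, $\|\bb\|_\infty=1$, $h\lesssim1$, and $\epsilon<h$ are jointly used.
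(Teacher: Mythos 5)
Your proposal is correct and follows essentially the same route as the paper's proof: each of the six contributions to $\|w_h\|^2_{\cip,E}$ is reduced, via inverse and trace inequalities together with the assumption $\epsilon<h$, to the master quantity $h\,\|\bb_h\cdot\nablah\Pg v_h\|^2_{0,\mathcal D(E)}$ controlled by the Oswald-interpolant stability estimate, with the jump term reused from the bound of $T_2$ in Lemma~\ref{lm:adv-term} and the conclusion obtained from~\eqref{falcao}. The only detail worth making explicit is the final absorption of the $|\bb|_{[W^{1,\infty}]^d}\|v_h\|_{0}$ remainder into the reaction part of the norm, which you correctly identify as relying on $\sigma$ being a fixed positive constant.
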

\begin{proof}
From the definition of $w_h$ and $h>\varepsilon$, for the gradient term in the $\|\cdot\|_{\cip}$ norm, we have
\begin{equation}\label{eq:cont-a}
\epsilon \| \nabla w_h \|^2_{0,E} 
\lesssim \epsilon h^{-2}\|w_h\|_{0,E}
\lesssim
\epsilon \| \pi (\bb_h \cdot \nabla\P0 v_h) \|^2_{0,E}
\lesssim
h \| \bb_h \cdot \nablah \Pg v_h \|^2_{0,\mathcal D(E)} \, .
\end{equation}
For the $L^2$ term, we have that
\begin{equation}\label{eq:cont-c}
\sigma \| w_h \|^2_{0,E}
\lesssim
\sigma \| h \pi (\bb_h \cdot \nabla\P0 v_h)\|_{0,E}
\lesssim
\sigma
\| v_h \|^2_{0,\mathcal{D}(E)} \, .
\end{equation}
The convective term in the norm is estimated by
\begin{equation}\label{eq:cont-b}
h \| \bb \cdot \nabla \P0 w_h \|_{0,E}^2 
\lesssim 
  h^{-1} \| \P0 w_h \|_{0,E}^2
\lesssim
  h \| \bb_h \cdot \nablah \Pg v_h \|_{0,\mathcal{D}(E)}^2 \, .   
\end{equation}
The boundary terms are estimated by
\begin{equation}\label{eq:cont-Na}
\sum_{e \subset \partial E \cap \Gamma}\dfrac{\epsilon}{\delta h} \| \Pi^{0,e}_{k-1} w_h \|^2_{0,e}
\leq
\dfrac{\epsilon}{\delta h} \| w_h \|^2_{0,\partial E\cap\Gamma}
\lesssim
\epsilon
\| \pi (\bb_h \cdot \nabla \P0 v_h) \|^2_{0,E}
\lesssim
h
\| \bb_h \cdot \nablah \Pg v_h \|^2_{0,\mathcal D(E)} \, ,
\end{equation}
and
\begin{equation}\label{eq:cont-Nb}
\| | \bb \cdot \nn^E|^{\frac{1}{2}} \P0 w_h \|^2_{0,\partial E\cap\Gamma_{\rm in}} 
\lesssim
h^{-1} \| \P0 w_h \|^2_{0,E}
\lesssim
h \| \bb_h \cdot \nablah \Pg v_h \|_{0,\mathcal{D}(E)}^2 \, .
\end{equation}
We gather~\eqref{eq:cont-a}, \eqref{eq:cont-c}, \eqref{eq:cont-b}, \eqref{eq:cont-Na}, \eqref{eq:cont-Nb}, together with estimate~\eqref{eq:Jhinfsup} for the term~$J_h^E(w_h,w_h)$, add over all elements, and obtain
\[
\| w_h \|_{\cip}^2\lesssim
\sigma
\| v_h \|^2_{0}
+\sum_{E\in \Omega_h}h\| \bb_h \cdot \nablah \Pg v_h \|_{0,E}^2\, ,
\]
and the result follows from estimate~\eqref{falcao}.
\end{proof}

We are now able to prove the inf-sup condition~\eqref{eq:infsup}.
\begin{theorem}[Inf-sup condition]\label{thm:inf-sup}
Under assumptions \textbf{(A1)}, 
\begin{equation}\label{eq:inf-sup-cond} 
\| v_h \|_{\cip} 
\lesssim 
\sup_{z_h \in V_h^{k,nc}(\Omega_h)} \dfrac{\Acip (v_h, z_h)}{\| z_h \|_{\cip}}
\qquad \text{for all $v_h \in V^{k,nc}_h(\Omega_h)$.}
\end{equation}
\end{theorem}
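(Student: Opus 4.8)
The plan is to establish~\eqref{eq:inf-sup-cond} by exhibiting, for each nonzero $v_h$, a suitable test function $z_h$ that realizes the supremum up to a constant. I would distinguish the two regimes $h\le\varepsilon$ and $h>\varepsilon$, since the auxiliary function $w_h$ of~\eqref{eq:wh} and Lemmas~\ref{lm:adv-term}--\ref{lm:whvh} are only available in the latter.

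In the diffusion-dominated regime $h\le\varepsilon$, I would first observe that the convective contribution to the norm is subsumed by the diffusion term: using $\|\bb\|_{[L^\infty(\Omega)]^d}=1$, the $H^1$-stability of the $L^2$-projection, and $h\le\varepsilon$, one gets
\[
h\,\|\bb\cdot\nablah\Pg v_h\|_{0}^2\lesssim h\,\|\nablah v_h\|_0^2\le\varepsilon\,\|\nablah v_h\|_0^2.
\]
Consequently every term of $\|v_h\|_{\cip}^2$ is controlled by the quantities on the right-hand side of Lemma~\ref{lm:symm-part}, so that this lemma upgrades to full coercivity $\Acip(v_h,v_h)\gtrsim\|v_h\|_{\cip}^2$. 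Choosing $z_h=v_h$ then yields the claim in this regime at once.

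In the advection-dominated regime $h>\varepsilon$, I would take $z_h\coloneqq v_h+\lambda\,w_h$ with $w_h$ as in~\eqref{eq:wh} and $\lambda>0$ a small constant to be fixed. By bilinearity and Lemma~\ref{lm:adv-term},
\[
\Acip(v_h,z_h)=\Acip(v_h,v_h)+\lambda\,\Acip(v_h,w_h)\ge(1-\lambda C_2)\,\Acip(v_h,v_h)+\lambda C_1\,h\|\bb\cdot\nablah\Pg v_h\|_0^2.
\]
Writing $B\coloneqq h\|\bb\cdot\nablah\Pg v_h\|_0^2$ and $N\coloneqq\|v_h\|_{\cip}^2$, Lemma~\ref{lm:symm-part} reads $\Acip(v_h,v_h)\gtrsim N-B$, where both $N-B$ (the sum of the non-convective norm terms) and $B$ are nonnegative. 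Fixing $\lambda$ so that $1-\lambda C_2\ge\tfrac12$ (e.g.\ $\lambda=1/(2C_2)$) and combining the two estimates, the coefficients of $N-B$ and of $B$ are both positive; bounding them below by the smaller one gives
\[
\Acip(v_h,z_h)\gtrsim (N-B)+B=N=\|v_h\|_{\cip}^2.
\]
The triangle inequality and Lemma~\ref{lm:whvh} then bound the denominator, $\|z_h\|_{\cip}\le\|v_h\|_{\cip}+\lambda\|w_h\|_{\cip}\lesssim\|v_h\|_{\cip}$, whence $\Acip(v_h,z_h)/\|z_h\|_{\cip}\gtrsim\|v_h\|_{\cip}$ and the supremum in~\eqref{eq:inf-sup-cond} is bounded below as required.

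Most of the analytic difficulty has already been absorbed into the three preparatory lemmas, so the theorem is essentially an assembly. The step requiring most care is the \emph{robust} combination: one must fix $\lambda$ independently of $\varepsilon$ and $h$ so that neither the coefficient $1-\lambda C_2$ of $\Acip(v_h,v_h)$ degenerates nor the convective gain $\lambda C_1 B$ is lost, and one must check that the constants produced in both regimes are uniform, so that the resulting inf-sup constant does not deteriorate as $\varepsilon\to0$. The case split itself---treating $h\le\varepsilon$ by plain coercivity, because $w_h$ and Lemmas~\ref{lm:adv-term}--\ref{lm:whvh} are unavailable there---is the other point to keep in mind.
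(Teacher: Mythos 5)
Your proposal is correct and follows essentially the same route as the paper: the same case split on $h>\varepsilon$ versus $h\le\varepsilon$, coercivity from Lemma~\ref{lm:symm-part} plus absorption of the convective term into the diffusion term when $h\le\varepsilon$, and the combined test function together with Lemmas~\ref{lm:adv-term} and~\ref{lm:whvh} when $h>\varepsilon$. The only cosmetic difference is that you take $z_h=v_h+\lambda w_h$ with $\lambda$ small while the paper takes $z_h=w_h+\kappa v_h$ with $\kappa$ large, which is the same choice up to rescaling.
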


\begin{proof} 
We distinguish two cases: $h>\epsilon$ and $h\le \epsilon$. 

If $h>\epsilon$, given $v_h \in V^{k,nc}(\Omega_h)$, we define the function
$z_h \coloneqq w_h + \kappa v_h$,
where $w_h$ is given by~\eqref{eq:wh}, and $\kappa$ is a sufficiently large constant. Combining Lemma \ref{lm:symm-part} with Lemma \ref{lm:adv-term} gives
\[
\Acip(v_h, z_h) =
\Acip(v_h, w_h + \kappa v_h) 
\gtrsim 
\| v_h \|_\cip^2 \, .
\]
This, together with Lemma~\ref{lm:whvh}, gives the inf-sup condition in the case $h>\epsilon$.\\
If $h\le \epsilon$, from Lemma~\ref{lm:symm-part}, the definition of the norm $\|\cdot\|_{\rm cip}$, and the estimate
$$
h  \| \bb \cdot \nabla \P0 v_h \|^2_{0,E} 
\lesssim \varepsilon \| \nabla \P0 v_h \|^2_{0,E}
\lesssim \varepsilon \| \nabla v_h \|^2_{0,E} \, ,
$$
we obtain
$\Acip (v_h, v_h)\gtrsim \|v_h\|_{\rm cip}^2$
and the proof is complete.
\end{proof}

\subsection{Error Estimates}

In this section, we derive error estimates under the following smoothness assumption on the terms appearing in~\eqref{eq:strong}. 

\medskip\noindent
\textbf{(A2) Smoothness 
assumption.} The solution of the continuous problem $u$, the right-hand side $f$, and the advective field $\bb$ in~\eqref{eq:weak-2} satisfy
\[
\begin{aligned}
&u \in  H^2(\Omega)\cap H^1_g(\Omega)\cap H^{k+1}(\Omega_h)\,, &
&f \in H^{k+\frac{1}{2}}(\Omega_h)\,, &
&\bb \in [W^{k+1}_{\infty}(\Omega_h)]^d\, . &
\end{aligned}
\]

\medskip

Let  $u_h$ be the discrete solution of \eqref{eq:cip-vem}. 
Then, 
thanks to the inf-sup condition \eqref{eq:inf-sup-cond}, for the error~$u-u_h$, we prove the following result.
\begin{proposition}
\label{prp:abstract}
Under assumptions \textbf{(A1)}, 
we have
\begin{equation}
\label{eq:abstract}
\| u - u_h \|_{\cip} 
\lesssim 
\| e_\mathcal{I} \|_{\cip}
+ \errF +  \errBt + \erra + \errb + \errc + \eta_d + \errN + \errJ  \, ,
\end{equation}
where $e_\mathcal{I} := u - u_\mathcal{I}$ and $\uint \in V_h^{k,nc}(\Omega_h)$ is the interpolant function of $u$ defined in Lemma \ref{lm:interpolation}.
Moreover, in the right-hand side of \eqref{eq:abstract} we have defined
\begin{equation}\label{eq:interpol_errors}
\begin{aligned}
\errF &\coloneqq  \| \tilde{\mathcal{F}} - \mathcal{F}_h \|_\cipdual \, ,
\\
\errBt &\coloneqq \| \tilde{\mathcal B}(u,\cdot)  \|_\cipdual \, ,
\\
\erra &\coloneqq \epsilon \, \| a(u, \cdot) - a_h(\uint, \cdot) \|_\cipdual \,,
\\
\errb &\coloneqq \|b(u, \cdot) - b_h(\uint, \cdot) \|_\cipdual \, ,
\\
\errc &\coloneqq   \sigma \| c(u, \cdot) -  c_h(\uint, \cdot) \|_\cipdual \,,
\\
\eta_d &\coloneqq \| d_h(\uint, \cdot) \|_\cipdual \, ,
\\
\errN &\coloneqq \| \Tilde{\mathcal{N}}_h(u, \cdot) - \mathcal{N}_h(\uint, \cdot) \|_\cipdual \, ,
\\
\errJ &\coloneqq  \|\tilde{J}_h(u ,   \cdot) - J_h(\uint,   \cdot)\|_\cipdual  = \|J_h(\uint,   \cdot)\|_\cipdual\, , \\
\end{aligned}
\end{equation}
where $\|\cdot \|_\cipdual$ denotes the dual norm of the norm $\|\cdot \|_\cip$. 
\end{proposition}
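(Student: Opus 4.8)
The plan is to run a standard Strang-type argument built on the inf-sup condition of Theorem~\ref{thm:inf-sup} and the consistency identity~\eqref{eq:consistency-1}. First I would split the error by the triangle inequality,
\[
\|u - u_h\|_{\cip} \le \|u - \uint\|_{\cip} + \|\uint - u_h\|_{\cip} = \|\eint\|_{\cip} + \|\uint - u_h\|_{\cip},
\]
so that the interpolation part is already accounted for and it remains to control the discrete component $\uint - u_h$. Since this function belongs to $V_h^{k,nc}(\Omega_h)$, I would apply the inf-sup condition~\eqref{eq:inf-sup-cond} to it, reducing the task to bounding $\Acip(\uint - u_h, z_h)$ by a constant times $\|z_h\|_{\cip}$, uniformly over $z_h \in V_h^{k,nc}(\Omega_h)$.

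Next I would rewrite $\Acip(\uint - u_h, z_h)$ by means of the two available identities. The discrete problem~\eqref{eq:cip-vem} gives $\Acip(u_h, z_h) = \mathcal{F}_h(z_h)$, whence $\Acip(\uint - u_h, z_h) = \Acip(\uint, z_h) - \mathcal{F}_h(z_h)$. Inserting the consistency relation~\eqref{eq:consistency-1} in the form $\tilde{\mathcal{F}}(z_h) = \Atcip(u, z_h) - \tilde{\mathcal{B}}(u, z_h)$ then yields the decomposition
\[
\Acip(\uint - u_h, z_h) = \bigl[\Acip(\uint, z_h) - \Atcip(u, z_h)\bigr] + \tilde{\mathcal{B}}(u, z_h) + \bigl[\tilde{\mathcal{F}}(z_h) - \mathcal{F}_h(z_h)\bigr].
\]
By the definitions in~\eqref{eq:interpol_errors}, the last two brackets are immediately bounded by $\errBt\,\|z_h\|_{\cip}$ and $\errF\,\|z_h\|_{\cip}$, respectively.

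For the first bracket I would expand $\Acip$ and $\Atcip$ into their constituent forms according to~\eqref{eq:AcipE} and~\eqref{eq:Atcip}, pairing each discrete form with its exact continuous counterpart:
\begin{align*}
\Acip(\uint, z_h) - \Atcip(u, z_h)
={}& \epsilon\bigl[a_h(\uint, z_h) - a(u, z_h)\bigr] + \bigl[b_h(\uint, z_h) - b(u, z_h)\bigr] \\
&+ \sigma\bigl[c_h(\uint, z_h) - c(u, z_h)\bigr] + d_h(\uint, z_h) \\
&+ \bigl[\mathcal{N}_h(\uint, z_h) - \tilde{\mathcal{N}}_h(u, z_h)\bigr] + \bigl[J_h(\uint, z_h) - \tilde{J}_h(u, z_h)\bigr].
\end{align*}
Each bracket is controlled by the corresponding dual-norm quantity in~\eqref{eq:interpol_errors} times $\|z_h\|_{\cip}$ (the sign being irrelevant), producing $\erra$, $\errb$, $\errc$, $\eta_d$, $\errN$, and $\errJ$. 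At this point I would also record that $\tilde{J}_h(u, \cdot) = 0$, since $u \in H^2(\Omega)$ has vanishing gradient jumps $[\![\nabla u]\!]$ across interior facets; this justifies the stated identity $\errJ = \|J_h(\uint, \cdot)\|_{\cipdual}$. Collecting all contributions, dividing by $\|z_h\|_{\cip}$, taking the supremum over $z_h$, invoking the inf-sup bound for $\uint - u_h$, and returning to the triangle inequality then gives~\eqref{eq:abstract}.

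I do not expect a genuine obstacle here: once the discrete equation and the consistency identity are in hand, the argument is essentially bookkeeping. The only points that demand care are matching each discrete form with the precise continuous form so that every difference coincides exactly with one of the dual norms defined in~\eqref{eq:interpol_errors}, and noting the vanishing of $\tilde{J}_h(u, \cdot)$ under the regularity assumption $u \in H^2(\Omega)$.
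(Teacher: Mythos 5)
Your argument is correct and follows essentially the same route as the paper: triangle inequality with $\eint$, the inf-sup condition applied to $\uint-u_h$, substitution of the discrete equation~\eqref{eq:cip-vem} and the consistency identity~\eqref{eq:consistency-1}, and term-by-term identification with the dual norms in~\eqref{eq:interpol_errors} (your decomposition is the paper's up to an overall sign). Your added observation that $\tilde{J}_h(u,\cdot)=0$ because $[\![\nabla u]\!]$ vanishes on interior facets for $u\in H^2(\Omega)$ is a correct justification of the identity $\errJ=\|J_h(\uint,\cdot)\|_\cipdual$, which the paper states without comment.
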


\begin{proof}
Setting $e_h := u_h - u_\mathcal{I}$,
thanks to the triangular inequality, we have 
\[
\| u - u_h \|_\cip \leq \| e_{\mathcal{I}} \|_\cip + \| e_h \|_\cip \, .
\]
Hence, we only have to bound the second term of the right-hand side. Using the inf-sup condition, \eqref{eq:cip-vem} and \eqref{eq:consistency-1}, we have that
\begin{equation}\label{eq:error-e_h}
\begin{aligned}
\| e_h \|_{\cip} 
& =
\sup_{v_h \in V_h^{k,nc}(\Omega_h)} \dfrac{\Acip(u_h - u_\mathcal{I}, v_h)} {\| v_h \|_{\cip}}
=
\sup_{v_h \in V_h^{k,nc}(\Omega_h)} \dfrac{\mathcal{F}_h(v_h) - \Acip(u_\mathcal{I}, v_h)} {\| v_h \|_{\cip}}
\\
& =
\sup_{v_h \in V_h^{k,nc}(\Omega_h)} \dfrac{\mathcal{F}_h(v_h) - \tilde{\mathcal{F}}(v_h) - \tilde{\mathcal{B}}(u, v_h) + \Atcip(u, v_h) - \Acip(u_\mathcal{I}, v_h)} {\| v_h \|_{\cip}}.
\end{aligned}
\end{equation}
Estimate \eqref{eq:abstract} now follows from considering the definitions of~$\Acip(\cdot, \cdot)$ and~$\Atcip(\cdot, \cdot)$ given in \eqref{eq:Acip} and \eqref{eq:Atcip}, respectively.
\end{proof}

We proceed by estimating each of the terms on the right-hand side of~\eqref{eq:abstract}. We start with
the interpolation error in the CIP norm.
\begin{lemma}\label{lm:est_eI}
Under assumptions \textbf{(A1)} and \textbf{(A2)}, we have the following estimate:
\begin{equation}\label{eq:eI}
\| \eint \|^2_{\cip}
\lesssim
\sum_{E\in \Omega_h}
\epsilon  \, h^{2\reg} \, \vert u \vert^2_{\reg+1,E} +  \sum_{E\in \Omega_h} h^{2\reg+1} \, \vert u \vert^2_{\reg+1,E} \, .
\end{equation}
\end{lemma}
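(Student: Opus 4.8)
The plan is to bound $\|\eint\|_{\cip}^2$ term by term according to its definition \eqref{eq:loc_nom_def}, using the interpolation estimate of Lemma~\ref{lm:interpolation}, namely $\|\eint\|_{0,E} + h_E\|\nabla \eint\|_{0,E} \lesssim h_E^{\reg+1}|u|_{\reg+1,E}$ with $s=\reg$. Recall that $\|\eint\|_{\cip,E}^2$ collects the diffusion term $\epsilon\|\nabla\eint\|_{0,E}^2$, the convective term $h\|\bb\cdot\nabla\Pg\eint\|_{0,E}^2$, the reaction term $\sigma\|\eint\|_{0,E}^2$, the Nitsche boundary terms on $\partial E\cap\Gamma$, and the CIP jump term $J_h^E(\eint,\eint)$. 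The target bound \eqref{eq:eI} has two groups: one scaling like $\epsilon h^{2\reg}$ and one like $h^{2\reg+1}$, so the strategy is to show that the diffusion term produces the $\epsilon h^{2\reg}$ contribution, while all the remaining terms (being multiplied either by $\sigma$, by $h$, by $\epsilon/(\delta h)$ on boundary facets, or by $\gamma_e h_e^2$ in the jumps) produce the $h^{2\reg+1}$ contribution, after invoking $\epsilon\lesssim 1$, $\sigma\lesssim 1$, and the scaling $\|\bb\|_\infty=1$ from \eqref{eq:beta-scaling}.

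\textbf{First} I would handle the two easy terms. For diffusion, $\epsilon\|\nabla\eint\|_{0,E}^2\lesssim \epsilon\, h_E^{2\reg}|u|_{\reg+1,E}^2$ directly from Lemma~\ref{lm:interpolation}, giving the first group. For reaction, $\sigma\|\eint\|_{0,E}^2\lesssim \sigma\,h_E^{2\reg+2}|u|_{\reg+1,E}^2\lesssim h_E^{2\reg+1}|u|_{\reg+1,E}^2$ using $\sigma h_E\lesssim 1$ and absorbing one power of $h_E\lesssim h$; this lands in the second group. \textbf{Next}, for the convective term, since $\Pg$ is an $L^2$-projection I would write $\|\bb\cdot\nabla\Pg\eint\|_{0,E}\le\|\bb\|_{[L^\infty(E)]^d}\|\nabla\Pg\eint\|_{0,E}$ and control $\|\nabla\Pg\eint\|_{0,E}$ by $\|\nabla\eint\|_{0,E}$ (stability of the $L^2$-projection in the broken $H^1$-seminorm, via an inverse estimate applied to $\Pg\eint-\eint$, or directly by the $H^1$-stability of $\Pi^0_k$); this yields $h\|\bb\cdot\nabla\Pg\eint\|_{0,E}^2\lesssim h\,h_E^{2\reg}|u|_{\reg+1,E}^2\lesssim h_E^{2\reg+1}|u|_{\reg+1,E}^2$.

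\textbf{The more delicate terms} are the boundary and jump contributions. For the Nitsche normalization term $\frac{\epsilon}{\delta h}\sum_{e\subset\partial E\cap\Gamma}\|\Pi^{0,e}_{k-1}\eint\|_{0,e}^2$, I would use $\|\Pi^{0,e}_{k-1}\eint\|_{0,e}\le\|\eint\|_{0,e}$ and a trace inequality $\|\eint\|_{0,e}^2\lesssim h_E^{-1}\|\eint\|_{0,E}^2+h_E\|\nabla\eint\|_{0,E}^2\lesssim h_E^{2\reg+1}|u|_{\reg+1,E}^2$; multiplying by $\epsilon/(\delta h)\lesssim h^{-1}$ and using $\epsilon\lesssim 1$ gives $\lesssim h_E^{2\reg}|u|_{\reg+1,E}^2$, which can be absorbed into the $h^{2\reg+1}$ group after a power of $h$ is recovered, or more cleanly kept as an $\epsilon h^{2\reg}$ contribution. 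The inflow term $\||\bb\cdot\nn^E|^{1/2}\Pg\eint\|_{0,\partial E\cap\Gamma_{\mathrm{in}}}^2$ is bounded analogously by $\|\eint\|_{0,\partial E}^2\lesssim h_E^{2\reg+1}|u|_{\reg+1,E}^2$ using $|\bb\cdot\nn^E|\le 1$ and the stability of $\Pg$. For the CIP jump term $J_h^E(\eint,\eint)$, I would bound the facet jumps $h_e^2\|[\![\nabla\Pg\eint]\!]\|_{0,e}^2$ by a trace inequality back to $\|\nabla\eint\|_{0,E}^2$ (note $\nabla u$ is continuous since $u\in H^2(\Omega)$, so the jump is entirely attributable to the interpolation error and its projection), obtaining $\lesssim h\cdot h_E^{2\reg}|u|_{\reg+1,E}^2$, while the stabilization piece $\gamma_E h_E\,\Stab((I-\Pg)\eint,(I-\Pg)\eint)\lesssim h|\eint|_{1,E}^2\lesssim h_E^{2\reg+1}|u|_{\reg+1,E}^2$ by \eqref{eq:sEh} and Lemma~\ref{lm:interpolation}. \textbf{The main obstacle} I anticipate is the correct tracking of $\epsilon$ versus $h$ powers in the boundary terms so that each lands in the claimed group of \eqref{eq:eI}; in particular one must be careful that the $\epsilon/(\delta h)$ factor does not spoil the $\epsilon$-uniform $h^{2\reg+1}$ scaling. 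Summing all local bounds over $E\in\Omega_h$ and using quasi-uniformity ($h_E\approx h$) then yields \eqref{eq:eI}.
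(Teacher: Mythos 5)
Your proposal is correct and follows essentially the same route as the paper: a term-by-term bound of $\|\eint\|_{\cip,E}^2$ via Lemma~\ref{lm:interpolation}, trace inequalities, and projection stability, with the Nitsche penalty term landing in the $\epsilon h^{2\reg}$ group and the inflow term in the $h^{2\reg+1}$ group, exactly as in the paper's treatment (which defers everything else to \cite{BLT:2024}). Only one clause is off: the Nitsche penalty contribution $\epsilon h^{2\reg}/\delta$ cannot be ``absorbed into the $h^{2\reg+1}$ group after a power of $h$ is recovered'' (that inequality goes the wrong way), but your alternative of keeping it as an $\epsilon h^{2\reg}$ contribution is precisely what the paper does, so the argument stands.
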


\begin{proof}
The proof can be developed along the lines of \cite{BLT:2024}. The only slight differences lie in the treatment of the following Nitsche terms, for which we use a trace inequality and the 
interpolation estimate in Lemma~\ref{lm:interpolation} to obtain
\[
\sum_{e \subset \partial E \cap \Gamma} \dfrac{\epsilon}{\delta h_E} \langle  \Pi^{0,e}_{k-1} e_\mathcal{I},  \Pi^{0,e}_{k-1} e_\mathcal{I} \rangle_{e}
\lesssim
\sum_{e \subset \partial E \cap \Gamma} \dfrac{\epsilon}{\delta h}
\| e_{\mathcal I} \|_{0,e}^2
\lesssim 
\dfrac{\epsilon}{\delta h^2} \| e_\mathcal{I} \|_{0,E}^2
+
\dfrac{\epsilon}{\delta} | e_\mathcal{I} |_{1,E}^2
\lesssim 
\epsilon \, h^{2\reg} \, |u|_{\reg+1,E}^2 \, ,
\]
and 
\[
\sum_{e \subset \partial E \cap \Gamma_{\text{in}}} \langle | \bb \cdot \nn^E |  \P0 e_\mathcal{I},  \P0 e_\mathcal{I} \rangle_{e}
\lesssim
\, h^{-1} \, \| \P0 e_\mathcal{I}\|_{0,E}^2
\leq
\, h^{-1} \, \| e_\mathcal{I}\|_{0,E}^2
\lesssim
{\color{blue}   } \, h^{2\reg+1} \, |u|_{\reg+1,E}^2 \, .
\]
\end{proof}

Now, we estimate the
term~$\errBt$ defined in~\eqref{eq:interpol_errors},
which enters into play because of the nonconformity of our method.
\begin{lemma}[Estimate of $\errBt$]
\label{lm:errB}
Under assumptions \textbf{(A1)} and \textbf{(A2)}, we have the following estimate:
\begin{equation}\label{eq:Best}
\errBt
\lesssim
\epsilon^{\frac12} h^{\reg}
\Big(\sum_{E\in\Omega_h} | u |_{\reg+1,E}^2\Big)^{1/2} \, .
\end{equation}	
\end{lemma}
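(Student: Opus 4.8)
The plan is to bound $\tilde{\mathcal{B}}(u,v_h)$ for an arbitrary $v_h\in V_h^{k,nc}(\Omega_h)$ and then divide by $\|v_h\|_{\cip}$, since by definition $\errBt=\sup_{v_h}\tilde{\mathcal{B}}(u,v_h)/\|v_h\|_{\cip}$. The structural fact I will exploit is that on every interior facet $e\subset\partial E\cap\partial E'$ the jump is purely normal, $[\![v_h]\!]=(v_h^E-v_h^{E'})\nn^E$, so that $\nabla u\cdot[\![v_h]\!]=(\nabla u\cdot\nn^e)\,([\![v_h]\!]\cdot\nn^e)$, and that by the definition of the nonconforming global space the scalar jump $[\![v_h]\!]\cdot\nn^e$ is $L^2(e)$-orthogonal to $\mathbb{P}_{k-1}(e)$. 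I will use this orthogonality twice.

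First, since $u\in H^2(\Omega)\cap H^{k+1}(\Omega_h)$ has a single-valued trace of $\nabla u$ on $e$, orthogonality lets me subtract $\Pi^{0,e}_{k-1}(\nabla u\cdot\nn^e)$ at no cost:
\[
\epsilon\int_e\nabla u\cdot[\![v_h]\!]\,{\rm d}s=\epsilon\int_e\bigl(\nabla u\cdot\nn^e-\Pi^{0,e}_{k-1}(\nabla u\cdot\nn^e)\bigr)\,[\![v_h]\!]\cdot\nn^e\,{\rm d}s.
\]
Cauchy--Schwarz and Lemma~\ref{lm:bh-edge} (with $n=k-1$, $m=k$, recalling that $\nn^e$ is constant on the planar facet, so $|\nabla u\cdot\nn^e|_{k,E}\lesssim|u|_{k+1,E}$) then produce the factor $h_E^{k-1/2}(|u|_{k+1,E}+|u|_{k+1,E'})$. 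Second, I will control the jump itself: because $\Pi^{0,e}_{k-1}([\![v_h]\!]\cdot\nn^e)=0$, the two facet projections of $v_h^E$ and $v_h^{E'}$ coincide, whence
\[
[\![v_h]\!]\cdot\nn^e=(v_h^E-\Pi^{0,e}_{k-1}v_h^E)-(v_h^{E'}-\Pi^{0,e}_{k-1}v_h^{E'}).
\]
Bounding each term by the bulk projection $\Pi^{0,E}_{k-1}$ (best-approximation on $e$), a multiplicative trace inequality, and Lemma~\ref{lm:bramble} gives $\|[\![v_h]\!]\cdot\nn^e\|_{0,e}\lesssim h_E^{1/2}(|v_h|_{1,E}+|v_h|_{1,E'})$. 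Multiplying the two per-facet bounds yields a clean factor $\epsilon\,h_E^{k}$.

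To conclude, I will sum over $e\in\mathcal{E}_h^0$, apply the discrete Cauchy--Schwarz inequality, and invoke the finite-overlap and quasi-uniformity parts of \textbf{(A1)} to arrive at
\[
\tilde{\mathcal{B}}(u,v_h)\lesssim\epsilon\,h^{k}\Bigl(\sum_{E\in\Omega_h}|u|_{k+1,E}^2\Bigr)^{1/2}\|\nablah v_h\|_0.
\]
The final, delicate, step concerns the exponent of $\epsilon$: since $\epsilon\|\nablah v_h\|_0^2$ is one of the summands of $\|v_h\|_{\cip}^2$ in~\eqref{eq:loc_nom_def}, I have $\|\nablah v_h\|_0\le\epsilon^{-1/2}\|v_h\|_{\cip}$, which converts the prefactor $\epsilon$ into $\epsilon^{1/2}$ and, after dividing by $\|v_h\|_{\cip}$, matches~\eqref{eq:Best} exactly.

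I expect the main obstacle to be the clean \emph{double} use of the facet-moment orthogonality: once to gain the $h^{k-1/2}$ approximation order against the exact trace $\nabla u\cdot\nn^e$, and once to control the jump of the non-polynomial, virtual function $v_h$ by its broken $H^1$-seminorm. Carrying this out while keeping precise track of the $\epsilon$ power is exactly what renders this nonconformity contribution compatible with the diffusion-scaled part of the CIP norm, and hence consistent with an $\OO(h^k)$ energy estimate.
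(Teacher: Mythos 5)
Your proof is correct and follows essentially the same route as the paper: both exploit the orthogonality of the scalar jump to the facet moments in order to replace $\nabla u$ by $(\mathbf{I}-\boldsymbol{\Pi}^{0,e}_{k-1})\nabla u$, invoke Lemma~\ref{lm:bh-edge} for the $h^{k-\frac12}$ factor, bound the jump of $v_h$ by $h^{\frac12}$ times broken $H^1$-seminorms, and absorb the resulting gradient into the $\epsilon$-weighted part of the CIP norm to trade $\epsilon$ for $\epsilon^{\frac12}$. The only cosmetic difference is in the jump bound: the paper subtracts $[\![\Pi^{0,e}_0 v_h]\!]$ (again by orthogonality of the first factor against constants), whereas you use the coincidence of the degree-$(k-1)$ facet moments of $v_h^E$ and $v_h^{E'}$ to rewrite the jump as a difference of element-wise oscillations --- both yield the same per-facet estimate.
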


\begin{proof}
Thanks to the definition of the space $V_h^{k,nc}(\Omega_h)$ and Lemma \ref{lm:bh-edge}, 
we get
\begin{equation}\label{eq:BtildeEst}
\begin{aligned}
\tilde{\mathcal{B}}(u,v_h) 
&=
\sum_{e \in {\mathcal E}_h^0} \epsilon \int_e \nabla u\cdot [\![v_h]\!] {\rm d}s 
 = 
\sum_{e \in {\mathcal E}_h^0}  \epsilon \int_e
({\mathbf I} - {\mathbf \Pi}^{0,e}_{k-1})  \nabla u \cdot [\![v_h]\!] {\rm d}s \\
&= 
\sum_{e \in {\mathcal E}_h^0} \epsilon  \int_e
({\mathbf I} - {\mathbf \Pi}^{0,e}_{k-1})  \nabla u \cdot([\![v_h]\!] -  [\![\Pi^{0,e}_0 v_h]\!]) {\rm d}s \\
& 
\leq
\sum_{e \in {\mathcal E}_h^0}  \epsilon^{\frac{1}{2}} \| ({\mathbf I} - {\mathbf \Pi}^{0,e}_{k-1})  \nabla u \|_{0,e} \, \epsilon^{\frac{1}{2}} \| \, [\![v_h]\!] -  [\![\Pi^{0,e}_0 v_h]\!] \, \|_{0,e}  \\
& \leq
\sum_{E \in \Omega_h}  \epsilon^{\frac{1}{2}} \, h^{k-\frac{1}{2}} \, |u|_{k+1,E} \, \epsilon^{\frac{1}{2}} \, h^{\frac{1}{2}} |v_h|_{1,E}  \\
&\lesssim 
\epsilon^{\frac{1}{2}} h^\reg 
\sum_{E \in \Omega_h}
| u |_{\reg+1,E} \| v_h \|_{\cip,E} \\
&\lesssim 
\epsilon^{\frac{1}{2}} h^\reg 
\Big(\sum_{E \in \Omega_h}
| u |_{\reg+1,E}^2\Big)^{1/2} \| v_h \|_{\cip} \, ,
\end{aligned}
\end{equation}
where we have used that each element is counted a finite number of times. 
Estimate~\eqref{eq:Best} follows from \eqref{eq:BtildeEst}.
\end{proof}

Some of the other quantities in~\eqref{eq:interpol_errors}
can be estimated exactly as in~\cite{BLT:2024}. We group them in the following lemma. 

\begin{lemma}[Estimates of $\erra$, $\errc$, $\errJ$ and $\errF$]
\label{lm:SameAsConf}
Under assumptions \textbf{(A1)} and \textbf{(A2)}, we have the following estimates:
\begin{equation}
\label{eq:SameAsConf}
\begin{aligned}
& \erra 
\lesssim \epsilon^{\frac12} \, h^{\reg} 
\Big(\sum_{E \in \Omega_h} \, \vert u \vert_{\reg+1,\mathcal D(E)}^2\Big)^{1/2} \, , \\
& \errc 
\lesssim h^{\reg + 1} 
\Big(\sum_{E \in \Omega_h}  \vert u \vert_{\reg+1,E}^2\Big)^{1/2} \, ,   \\ 
& \errJ
\lesssim h^{\reg + \frac12} 
\Big(\sum_{E \in \Omega_h}  | u |_{\reg+1,E}^2 \Big)^{1/2} \, , \\
& \errF \lesssim h^{\reg + \frac12} \, 
\Big(\sum_{E \in \Omega_h}  \vert f \vert_{\reg + \frac{1}{2},E}^2\Big)^{1/2} \, .
\end{aligned}
\end{equation}	
\end{lemma}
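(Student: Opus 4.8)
The plan is to treat each of the four quantities as a dual norm: for an arbitrary but fixed $v_h\in V_h^{k,nc}(\Omega_h)$ I bound the corresponding linear/bilinear form evaluated against $v_h$ by (claimed rate)$\,\cdot\,\|v_h\|_{\cip}$ and then divide. The recurring ingredients are the polynomial approximation estimates of Lemma~\ref{lm:bramble}, the interpolation estimate of Lemma~\ref{lm:interpolation}, the consistency and boundedness \eqref{eq:sEh} of $\Stab$, and the fact that $\|v_h\|_{\cip}$ controls each of $\epsilon^{1/2}\|\nabla v_h\|_0$, $\sigma^{1/2}\|v_h\|_0$ and $J_h(v_h,v_h)^{1/2}$; the fixed constant $\sigma$ is absorbed into the hidden constants. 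Since the paper states these four follow \cite{BLT:2024}, the work is to reproduce those consistency computations in the present notation.

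For $\erra$ and $\errc$ I would run the standard VEM consistency arguments. In the diffusion term, insert a local polynomial projection of $u$; because $\Stab$ annihilates polynomials, only $(I-\PN)$ applied to the interpolation/projection error survives in the stabilization, and \eqref{eq:sEh} together with Lemmas~\ref{lm:bramble}--\ref{lm:interpolation} yields (with the finite-overlap neighbour sums as in the reference) $a(u,v_h)-a_h(\uint,v_h)\lesssim h^{\reg}\big(\sum_E|u|_{\reg+1,\mathcal D(E)}^2\big)^{1/2}\|\nabla v_h\|_0$; multiplying by $\epsilon$ and using $\epsilon^{1/2}\|\nabla v_h\|_0\le\|v_h\|_{\cip}$ gives the $\epsilon^{1/2}h^{\reg}$ rate. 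For the mass term, $\P0$-orthogonality gives $\int_E u\,v_h-\int_E\P0\uint\,\P0 v_h=\int_E(u-\P0\uint)v_h$ with $\|u-\P0\uint\|_{0,E}\lesssim h^{\reg+1}|u|_{\reg+1,E}$, while the scaled stabilization $|E|\,\Stab((I-\P0)\uint,(I-\P0)v_h)$ is controlled by \eqref{eq:sEh} at the same rate; dividing by $\sigma^{1/2}\|v_h\|_0\le\|v_h\|_{\cip}$ yields $h^{\reg+1}$.

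For $\errJ$ I would first record that $u\in H^2(\Omega)$ makes the trace of $\nabla u$ single-valued across interior facets, so $[\![\nabla u]\!]=0$ and $\tilde J_h(u,v_h)=0$, which is exactly the identity $\errJ=\|J_h(\uint,\cdot)\|_{\cipdual}$. Cauchy--Schwarz for the symmetric positive semidefinite form $J_h$ together with $J_h(v_h,v_h)^{1/2}\le\|v_h\|_{\cip}$ then reduces everything to estimating $J_h(\uint,\uint)^{1/2}$; in the facet contribution I use $[\![\nabla\Pg\uint]\!]=[\![\nabla(\Pg\uint-u)]\!]$ with a trace inequality, an inverse inequality, and Lemma~\ref{lm:bramble} (the $\Stab$ part being handled as for $\erra$), so that the $h_e^2$ weight produces $h^{2\reg+1}|u|^2_{\reg+1}$ and hence the rate $h^{\reg+1/2}$. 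For $\errF$ the crucial observation is that the three boundary (Nitsche) contributions in $\tilde{\mathcal F}_h^E$ and $\mathcal F_h^E$ coincide, so the difference collapses to the volume term $\sum_E\int_E f\,(I-\P0)v_h=\sum_E\int_E (f-\Pg f)(I-\P0)v_h$ by $\P0$-orthogonality; Cauchy--Schwarz, a fractional approximation bound $\|f-\Pg f\|_{0,E}\lesssim h^{\reg+1/2}|f|_{\reg+1/2,E}$, and $\|(I-\P0)v_h\|_{0,E}\le\|v_h\|_{0,E}\lesssim\sigma^{-1/2}\|v_h\|_{\cip}$ give the claimed rate.

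The main obstacle is not any single inequality but the bookkeeping of the $\epsilon$- and $h$-powers when turning each elementwise estimate into a dual-norm bound: one must pair every factor produced by the consistency/approximation step with the exactly matching piece of $\|v_h\|_{\cip}$ (namely $\epsilon^{1/2}\|\nabla v_h\|_0$ for $\erra$, $\sigma^{1/2}\|v_h\|_0$ for $\errc$ and $\errF$, and $J_h(v_h,v_h)^{1/2}$ for $\errJ$), while keeping the finite-overlap summation over $\mathcal D(E)$ under control. The most delicate point is verifying the scaling of the $|E|$-weighted and $\gamma_E h_E$-weighted stabilization terms against \eqref{eq:sEh} in general dimension, but this is identical to the computation already carried out in \cite{BLT:2024}.
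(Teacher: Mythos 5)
Your proposal is correct and follows essentially the same route as the paper, which for this lemma simply defers to the standard consistency computations of the conforming reference \cite{BLT:2024} and singles out exactly the observation you also make, namely that the $g$-dependent Nitsche terms in $\mathcal{F}_h^E$ and $\tilde{\mathcal{F}}_h^E$ coincide so that $\errF$ reduces to $\sum_E\int_E (f-\Pg f)(I-\P0)v_h$. The only cosmetic slip is the phrase ``$\Stab$ annihilates polynomials'': it is $(I-\PN)$ (resp.\ $(I-\P0)$) that kills the inserted polynomial approximant, after which $\Stab$ acts only on the approximation error, as your subsequent sentence in fact states.
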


\begin{proof}
We refer to~\cite[Sect.~3.3]{BLT:2024}. We observe that, for~$\errF$, the terms containing~$g$ cancel with each other (cf.~\eqref{eq:FEh} and~\eqref{eq:tildeFEh}), therefore the estimate contains only~$f$.
\end{proof}

In the following three lemmas, we 
detail the proofs of the estimates for 
the remaining terms in~\eqref{eq:interpol_errors}.

\begin{lemma}[Estimate of $\errb$]
\label{lm:errb}
Under assumptions \textbf{(A1)} and \textbf{(A2)}, the term $\errb$ satisfies
\begin{equation}\label{eq:etab_1}
\errb 
\lesssim h^{\reg} 
\Big(\sum_{E\in\Omega_h} \| \bb \|_{[W^{\reg}_\infty(E)]^2}^2 | u |_{\reg+1,E}^2 \Big)^{1/2}
\, .
\end{equation}			
\end{lemma}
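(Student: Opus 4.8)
The plan is to estimate, for an arbitrary $v_h\in V_h^{k,nc}(\Omega_h)$, the quantity $b(u,v_h)-b_h(\uint,v_h)$ and then pass to the dual norm $\|\cdot\|_{\cipdual}$. I would work element by element and, recalling that $\P0=\Pi^{0,E}_{\reg}$, add and subtract both $\int_E(\bb\cdot\nabla u)\,\P0 v_h$ and the quantity obtained by inserting $\P0 u$ in the discrete form, so that on each $E$
\begin{equation*}
\int_E(\bb\cdot\nabla u)\,v_h-\int_E(\bb\cdot\nabla\P0\uint)\,\P0 v_h=\mathrm{(I)}+\mathrm{(II)}+\mathrm{(III)},
\end{equation*}
with
\begin{equation*}
\mathrm{(I)}:=\int_E\bb\cdot\nabla(u-\P0 u)\,\P0 v_h,\qquad\mathrm{(II)}:=\int_E\bb\cdot\nabla\P0\eint\,\P0 v_h,\qquad\mathrm{(III)}:=\int_E(\bb\cdot\nabla u)\,(v_h-\P0 v_h),
\end{equation*}
where $\eint=u-\uint$ and I have used $\nabla\P0\uint=\nabla\P0 u-\nabla\P0\eint$.

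The first two terms are the ``approximation'' contributions and can be controlled cleanly by seminorms. For $\mathrm{(I)}$, the Cauchy--Schwarz inequality, $\|\bb\|_{L^\infty(E)}\le\|\bb\|_{[W^{\reg,\infty}(E)]^d}$, the contraction bound $\|\P0 v_h\|_{0,E}\le\|v_h\|_{0,E}$, and the polynomial approximation estimate of Lemma~\ref{lm:bramble} (with $m=1$, $s=\reg+1$) give $\mathrm{(I)}\lesssim\|\bb\|_{[W^{\reg,\infty}(E)]^d}\,h^{\reg}\,|u|_{\reg+1,E}\,\|v_h\|_{0,E}$. For $\mathrm{(II)}$ I would first use an inverse inequality for polynomials to write $\|\nabla\P0\eint\|_{0,E}\lesssim h_E^{-1}\|\P0\eint\|_{0,E}\le h_E^{-1}\|\eint\|_{0,E}$, and then the interpolation estimate of Lemma~\ref{lm:interpolation}, obtaining $\|\nabla\P0\eint\|_{0,E}\lesssim h^{\reg}|u|_{\reg+1,E}$ and hence the same bound as for $\mathrm{(I)}$. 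In both cases the factor $\|v_h\|_{0,E}$ is absorbed by the reaction part $\sigma\|v_h\|_{0,E}^2$ of $\|v_h\|_{\cip,E}^2$ (the hidden constant is allowed to depend on the fixed $\sigma$).

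The consistency term $\mathrm{(III)}$ is the heart of the matter. Since $v_h-\P0 v_h$ is $L^2(E)$-orthogonal to $\Pk_{\reg}(E)$ by definition of $\P0=\Pi^{0,E}_{\reg}$, I may subtract the best polynomial and write $\mathrm{(III)}=\int_E\big(\bb\cdot\nabla u-\Pi^{0,E}_{\reg}(\bb\cdot\nabla u)\big)(v_h-\P0 v_h)$, so that Cauchy--Schwarz, $\|v_h-\P0 v_h\|_{0,E}\le\|v_h\|_{0,E}$, and Lemma~\ref{lm:bramble} applied to the function $\bb\cdot\nabla u$ (with $s=\reg$) yield $\mathrm{(III)}\lesssim h^{\reg}\,|\bb\cdot\nabla u|_{\reg,E}\,\|v_h\|_{0,E}$. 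Expanding $|\bb\cdot\nabla u|_{\reg,E}$ by the Leibniz rule and using $\bb\in[W^{\reg,\infty}(E)]^d$ bounds this seminorm by the leading contribution $\|\bb\|_{[W^{\reg,\infty}(E)]^d}\,|u|_{\reg+1,E}$ plus lower-order terms. Collecting $\mathrm{(I)}$--$\mathrm{(III)}$, summing over $E$, and applying the discrete Cauchy--Schwarz inequality together with the finite-overlap property of the mesh gives
\begin{equation*}
b(u,v_h)-b_h(\uint,v_h)\lesssim h^{\reg}\Big(\sum_{E\in\Omega_h}\|\bb\|_{[W^{\reg,\infty}(E)]^d}^2\,|u|_{\reg+1,E}^2\Big)^{1/2}\|v_h\|_{\cip},
\end{equation*}
and dividing by $\|v_h\|_{\cip}$ and taking the supremum yields~\eqref{eq:etab_1}. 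The main obstacle is precisely the treatment of $\mathrm{(III)}$: one must extract from the polynomial approximation of the product $\bb\cdot\nabla u$ exactly the coefficient norm $\|\bb\|_{[W^{\reg,\infty}(E)]^d}$ and the pure seminorm $|u|_{\reg+1,E}$, controlling the lower-order Leibniz terms (which a priori involve $|u|_{m,E}$ with $m\le\reg$) without deteriorating the power $h^{\reg}$; this is where the smoothness assumption \textbf{(A2)} on $\bb$ is used in full.
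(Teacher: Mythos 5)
Your proof is correct, and it follows the same overall strategy as the paper (element-by-element add-and-subtract into an interpolation part, a projection part, and an orthogonality/consistency part, each controlled by Lemma~\ref{lm:bramble}, Lemma~\ref{lm:interpolation}, and absorption of $\|v_h\|_{0,E}$ into the reaction part of $\|\cdot\|_{\cip,E}$), but with a genuinely different decomposition. The paper writes the local difference as $(\bb\cdot\nabla(u-\uint),v_h)_{0,E}+((I-\P0)(\bb\cdot\nabla\uint),(I-\P0)v_h)_{0,E}+(\bb\cdot\nabla(I-\P0)\uint,\P0 v_h)_{0,E}$, i.e.\ it inserts $\uint$ first and therefore carries $\bb\cdot\nabla\uint$ into the orthogonality term, which it must then split again into $\bb\cdot\nabla u$ plus $\bb\cdot\nabla(\uint-u)$ before applying the projection estimate. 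Your arrangement inserts $\P0 u$ and keeps $\bb\cdot\nabla u$ directly in the orthogonality term $\mathrm{(III)}$, which makes that step marginally cleaner; in exchange, your term $\mathrm{(II)}$ needs a polynomial inverse inequality on $\P0\eint$ where the paper's $T^E_{b,1}$ uses the $H^1$ interpolation estimate directly --- both yield the same $h^{\reg}$ rate. The difficulty you flag at the end is real but is shared by the paper: both proofs reduce to $\|(I-\P0)(\bb\cdot\nabla u)\|_{0,E}\lesssim h^{\reg}|\bb\cdot\nabla u|_{\reg,E}$ followed by a Leibniz expansion, and indeed the paper's intermediate bound for its $T^E_{b,2}$ quietly ends with the full norm $\|u\|_{\reg+1,E}$ rather than the seminorm appearing in the statement, which is exactly the concession to the lower-order Leibniz terms that you make explicit. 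So your treatment of $\mathrm{(III)}$ is at least as careful as the paper's, and no step of your argument fails.
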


\begin{proof}
We have
\begin{equation*}
\errb
= \sup_{v_h \in V_h^{k,nc}(\Omega_h)}
\dfrac{\sum_{E \in \Omega_h}( \bb \cdot \nabla u, v_h)_{0,E}  - (\bb \cdot \nabla \P0 \uint, \P0 v_h)_{0,E}}
{\| v_h \|_{\cip}} \, .
\end{equation*}
We proceed locally element by element. For any fixed element $E\in \Omega_h$, we have 
\[
\begin{aligned}
( \bb \cdot \nabla u,& v_h)_{0,E} 
- 
(\bb \cdot \nabla \P0 \uint, \P0 v_h)_{0,E} \\
& = 
(\bb \cdot \nabla (u - \uint), v_h)_{0,E}
+
((I - \P0)(\bb \cdot \nabla \uint), (I - \P0) v_h)_{0,E} \\
& \quad +
( \bb \cdot \nabla (I- \P0) \uint, \P0 v_h)_{0,E} \\
&\eqqcolon
T^E_{b,1} + T^E_{b,2} + T^E_{b,3}
\, .
\end{aligned}
\]
We consider each of the three terms. On the first one, we use the Cauchy-Schwarz inequality, the interpolation estimate, and the definition of $\| \cdot \|_{\cip,E}$, and obtain
\[
T^E_{b,1} = (\bb \cdot \nabla (u - \uint), v_h)
\lesssim \| \bb \|_{[L^\infty(E)]^d}
| u - \uint |_{1,E} \| v_h \|_{0,E}
\lesssim h^\reg \| \bb \|_{[L^\infty(E)]^d} |u|_{\reg +1,E} \| v_h \|_{\cip,E} \, .
\]
For the second one, we have
\[
\begin{aligned}
T^E_{b,2} 
&=
((I - \P0)(\bb \cdot \nabla \uint), (I - \P0) v_h)_{0,E} \\
&=
((I - \P0)(\bb \cdot \nabla u), (I - \P0) v_h)_{0,E}
+
((I - \P0)(\bb \cdot \nabla (\uint - u)), (I - \P0) v_h)_{0,E} \\
& \lesssim
\bigl(
\| (I - \P0)(\bb \cdot \nabla u) \|_{0,E} + \|(I - \P0)(\bb \cdot \nabla (\uint - u))\|_{0,E} 
\bigr)
\| (I - \P0) v_h \|_{0,E} \\
& \lesssim
\bigl(
h^\reg | \bb \cdot \nabla u |_{\reg,E} + \| \bb \|_{[L^\infty(E)]^d} | u - \uint|_{1,E} 
\bigr)
\| v_h \|_{\cip,E} \\
& \lesssim
h^\reg \| \bb \|_{[W^{\reg,\infty(E)}]^d}\| u \|_{\reg +1,E} \| v_h \|_{\cip,E} \, .
\end{aligned}
\]
The third term can be estimated as
\[
\begin{aligned}
T^E_{b,3} 
&=
( \bb \cdot \nabla (I- \P0) \uint, \P0 v_h)_{0,E} \\
&\lesssim
\| \bb \|_{[L^\infty(E)]^d}| (I - \P0) \uint |_{1,E} \| \P0 v_h \|_{0,E} \\
& \lesssim
\| \bb \|_{[L^\infty(E)]^d} \bigl(| (I - \P0) u |_{1,E} +  | (I - \P0) (u - \uint) |_{1,E} \bigr)\| \P0 v_h \|_{0,E} \\
& \lesssim
h^\reg \| \bb \|_{[L^\infty(E)]^d} | u |_{\reg+1,E} \| v_h \|_{\cip,E} \, .
\end{aligned}
\]
Estimate \eqref{eq:etab_1} 
follows from the above three bounds, and from summing over all the elements $E \in \Omega_h$.
\end{proof}

\begin{lemma}[Estimate of $\eta_d$]
\label{lm:errd}
Under assumptions \textbf{(A1)} and \textbf{(A2)},
the term $\eta_d$ can be estimated as follows:
\begin{equation}\label{eq:errd}
\eta_d
\lesssim h^{\reg} 
\Big(\sum_{E \in \Omega_h} |u|_{\reg+1,\mathcal{D}(E)}^2\Big)^{1/2} \, .
\end{equation}
\end{lemma}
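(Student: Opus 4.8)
The plan is to estimate the dual norm directly: I will bound $d_h(\uint, v_h)$ for an arbitrary test function $v_h\in V_h^{k,nc}(\Omega_h)$ and then divide by $\|v_h\|_{\cip}$ and take the supremum. First I would reorganize the sum of the local forms in~\eqref{eq:dEh} as a single sum over internal facets, since each $e\in\mathcal{E}_h^0$ is shared by exactly two elements and both the jump and the average are defined symmetrically; this rewrites the quantity as
\[
d_h(\uint, v_h) = -\sum_{e\in\mathcal{E}_h^0}\int_e \bb\cdot[\![\Pg\uint]\!]\,\{\Pg v_h\}\,{\rm d}s.
\]

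The key step, and the one I expect to be the crux of the argument, is to exploit that under \textbf{(A2)} the exact solution $u\in H^2(\Omega)$ is continuous across facets, so $[\![u]\!]=0$ and I may subtract $u$ inside the jump for free: on each $e\subset\partial E\cap\partial E'$,
\[
[\![\Pg\uint]\!]=[\![\Pg\uint-u]\!]=(\Pi_k^{0,E}\uint-u)\nn^E+(\Pi_k^{0,E'}\uint-u)\nn^{E'}.
\]
Without this observation the jump of the projection need not be small; with it, each contribution becomes a genuine approximation error. Then I would apply the Cauchy--Schwarz inequality facet by facet and use $|\bb\cdot\nn^E|\le 1$ (recall~\eqref{eq:beta-scaling}) to reduce matters to bounding $\|\Pi_k^{0,E}\uint-u\|_{0,e}$ on the $\uint$ side and $\|\{\Pg v_h\}\|_{0,e}$ on the $v_h$ side.

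For the first factor I would split $\Pi_k^{0,E}\uint-u=\Pi_k^{0,E}(\uint-u)+(\Pi_k^{0,E}u-u)$, control the second summand by Lemma~\ref{lm:bh-edge} with $m=\reg+1$, and the first by a polynomial trace inequality, the $L^2$-stability of $\P0$, and the interpolation estimate of Lemma~\ref{lm:interpolation}; both contributions give $\|\Pi_k^{0,E}\uint-u\|_{0,e}\lesssim h_E^{\reg+1/2}\,|u|_{\reg+1,E}$. For the second factor, a polynomial trace inequality together with projection stability yields $\|\{\Pg v_h\}\|_{0,e}\lesssim h^{-1/2}(\|v_h\|_{0,E}+\|v_h\|_{0,E'})$. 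Multiplying, the powers of $h$ combine to $h^{\reg}$, and a discrete Cauchy--Schwarz over the facets (using \textbf{(A1)} to ensure each element is counted only a uniformly bounded number of times, which is what produces the patches $\mathcal{D}(E)$) gives
\[
|d_h(\uint,v_h)|\lesssim h^{\reg}\Big(\sum_{E\in\Omega_h}|u|_{\reg+1,\mathcal{D}(E)}^2\Big)^{1/2}\,\|v_h\|_{0}.
\]

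Finally I would absorb the remaining $\|v_h\|_0$ into the norm using $\sigma\|v_h\|_0^2\le\|v_h\|_{\cip}^2$, so that $\|v_h\|_0\lesssim\|v_h\|_{\cip}$, the hidden constant depending on the fixed reaction coefficient~$\sigma$, which is not tracked according to the convention of the paper (cf.\ the analogous suppression in the estimate of $\errc$). Dividing by $\|v_h\|_{\cip}$ and passing to the supremum then yields~\eqref{eq:errd}. Besides the insertion-of-$u$ trick, the only delicate point is to keep the bookkeeping of the $h$-powers consistent so that no spurious negative power of $h$ survives; everything after that is a routine combination of trace, inverse, and interpolation estimates.
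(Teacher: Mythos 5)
Your proposal is correct and follows essentially the same route as the paper's proof: the crux in both is inserting $u$ into the jump (legitimate since $u\in H^2(\Omega)$ gives $[\![u]\!]=0$), splitting $[\![u-\Pi_k^0\uint]\!]$ into $[\![u-\Pi_k^0 u]\!]$ plus $[\![\Pi_k^0(u-\uint)]\!]$, and combining trace inequalities with Lemma~\ref{lm:bramble} and Lemma~\ref{lm:interpolation}, before absorbing $\|v_h\|_0$ into $\|v_h\|_{\cip}$ via the $\sigma$-term. The only cosmetic difference is that the bound on $\|u-\Pi_k^{0,E}u\|_{0,e}$ comes from the intermediate inequality in the proof of Lemma~\ref{lm:bh-edge} (multiplicative trace plus element projection estimates) rather than from its statement, which concerns the facet projection $\Pi^{0,e}_n$.
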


\begin{proof}
We have
\begin{equation*}
\eta_d = 
\sup_{v_h \in V_h^{k,nc}(\Omega_h)}
\frac{ -\frac{1}{2}\sum_{E\in\Omega_h}\sum_{e \subset \partial E / \Gamma} \int_e \bb\cdot [\![\Pi_k^0 \uint]\!]
\{ \Pi_k^0 v_h \} {\rm d}s}
{\| v_h \|_{\cip}}\,.
\end{equation*}
Again, we proceed element by element.
Using also a trace inequality for polynomials, we estimate the numerator of the quotient above as follows: 
\[
\begin{aligned}
& -  \sum_{e \subset \partial E \setminus \Gamma}  \dfrac{1}{2}  \int_e \bb\cdot [\![\Pi_k^0 \uint]\!]
\{ \Pi_k^0 v_h \} {\rm d}s  = 
\sum_{e \subset \partial E \setminus \Gamma} \dfrac{1}{2} \int_e  \bb\cdot [\![u - \Pi_k^0 \uint]\!]
\{ \Pi_k^0 v_h \} {\rm d}s \\
& = 
\sum_{e \subset \partial E \setminus \Gamma} \dfrac{1}{2}  \int_e \bb\cdot [\![u - \Pi_k^0 u]\!]
\{ \Pi_k^0 v_h \} {\rm d}s
+
\sum_{e \subset \partial E \setminus \Gamma} \dfrac{1}{2} \int_e \bb \cdot [\![\Pi^0_k (u - \uint )]\!] \{\Pi^{0}_k v_h\} {\rm d}s
\\
& \lesssim
\| \bb \|_{[L^\infty(\mathcal{D}(E))]^d} \sum_{E' \subset \mathcal{D}(E) }  \bigl( h^{\frac{1}{2}} |  u - \Pi^0_k u |_{1,E'} +
h^{-\frac{1}{2}} \|  u - \Pi^0_k u \|_{0,E'}\\
& \qquad \qquad + h^{-\frac{1}{2}} \| \Pi^0_k (u - \uint ) \|_{0,E'} \bigr) h^{-\frac{1}{2}} \| v_h\|_{0,\mathcal{D}(E)} \\
& \lesssim
h^{\reg} |u|_{\reg +1,\mathcal{D}(E)} \| v_h \|_{\cip,\mathcal{D}(E)} \, .
\end{aligned}
\]
Estimate \eqref{eq:errd} follows from the bound above, by summing over the elements and taking into account, as already noticed for \eqref{eq:almost-done}, that each element is counted only a uniformly bounded number of times.
\end{proof}

\begin{lemma}[Estimate of $\errN$]
\label{lm:errN}
Under assumptions \textbf{(A1)} and \textbf{(A2)},
the term $\errN$ can be estimated as follows:
\begin{equation}\label{eq:errn}
\errN
\lesssim 
(\epsilon^\frac12  h^{\reg}  +  h^{\reg + \frac12} ) 
\Big(\sum_{E \in \Omega_h} 
 | u |_{\reg +1,E}^2 \Big)^{1/2} \, .
\end{equation}
\end{lemma}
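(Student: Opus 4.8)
The plan is to bound the bilinear difference $\tilde{\mathcal{N}}_h(u,v_h)-\mathcal{N}_h(\uint,v_h)$ by the right-hand side of~\eqref{eq:errn} times $\|v_h\|_{\cip}$, and then pass to the supremum defining $\|\cdot\|_{\cipdual}$. I would work element by element: comparing~\eqref{eq:tildeNEh} with~\eqref{eq:NEh}, the local difference splits into four pieces, one per Nitsche term,
\[
\eNa := -\epsilon\langle(\nabla u-\PZ0P\nabla\uint)\cdot\nn^E,v_h\rangle_{\partial E\cap\Gamma},\qquad
\eNb := -\epsilon\langle u-\uint,\PZ0P\nabla v_h\cdot\nn^E\rangle_{\partial E\cap\Gamma},
\]
\[
\eNc := \dfrac{\epsilon}{\delta h_E}\sum_{e\subset\partial E\cap\Gamma}\langle\Pi^{0,e}_{k-1}(u-\uint),\Pi^{0,e}_{k-1}v_h\rangle_e,\qquad
\eNd := \langle|\bb\cdot\nn^E|(u-\P0\uint),\P0 v_h\rangle_{\partial E\cap\Gamma_{\text{in}}},
\]
where in $\eNc$ I have used self-adjointness of $\Pi^{0,e}_{k-1}$ to move the projection onto the first slot. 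Each piece is then handled with Cauchy--Schwarz, a trace inequality, the interpolation bound of Lemma~\ref{lm:interpolation}, and the polynomial approximation estimates of Lemmas~\ref{lm:bramble} and~\ref{lm:bh-edge}, matching the $v_h$-factor against the appropriate component of $\|v_h\|_{\cip,E}$.

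The three pieces $\eNb,\eNc,\eNd$ are straightforward because the $v_h$-factor is already a cip-controlled quantity. For $\eNb$, since $\PZ0P\nabla v_h\cdot\nn^E\in\Pk_{k-1}(e)$, I insert $\Pi^{0,e}_{k-1}$ on $u-\uint$ for free, use $\|\Pi^{0,e}_{k-1}(u-\uint)\|_{0,e}\lesssim h^{\reg+\frac12}|u|_{\reg+1,E}$ and $\|\PZ0P\nabla v_h\|_{0,e}\lesssim h^{-1/2}|v_h|_{1,E}$, and recognize $\epsilon^{1/2}|v_h|_{1,E}\le\|v_h\|_{\cip,E}$, which yields the $\epsilon^{1/2}h^{\reg}$ scaling. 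For $\eNc$ I factor the weight $\frac{\epsilon}{\delta h}$ as $(\frac{\epsilon}{\delta h})^{1/2}\cdot(\frac{\epsilon}{\delta h})^{1/2}$, bound the interpolation factor by $h^{\reg+\frac12}|u|_{\reg+1,E}$, and absorb $(\frac{\epsilon}{\delta h})^{1/2}\|\Pi^{0,e}_{k-1}v_h\|_{0,e}\le\|v_h\|_{\cip,E}$. For $\eNd$ I use $|\bb\cdot\nn^E|\le1$ to reduce $u-\P0\uint=(u-\P0 u)+\P0(u-\uint)$ to order $h^{\reg+\frac12}|u|_{\reg+1,E}$ on the facet and match $\||\bb\cdot\nn^E|^{1/2}\P0 v_h\|_{0,\partial E\cap\Gamma_{\text{in}}}\le\|v_h\|_{\cip,E}$, which produces the $h^{\reg+\frac12}$ contribution.

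The main obstacle is $\eNa$, where $v_h$ appears \emph{unprojected} on $\Gamma$ and is not directly controlled by $\|v_h\|_{\cip}$. I resolve this by splitting $\nabla u-\PZ0P\nabla\uint=(I-\PZ0P)\nabla u+\PZ0P\nabla(u-\uint)$. The polynomial piece $\PZ0P\nabla(u-\uint)\cdot\nn^E\in\Pk_{k-1}(e)$ lets me replace $v_h$ by $\Pi^{0,e}_{k-1}v_h$ for free, giving (via $\|\PZ0P\nabla(u-\uint)\|_{0,e}\lesssim h^{\reg-\frac12}|u|_{\reg+1,E}$ and the $\frac{\epsilon}{\delta h}$ norm term) an $\epsilon^{1/2}h^{\reg}$ bound. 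For the genuinely nonpolynomial piece $(I-\PZ0P)\nabla u\cdot\nn^E$ I again write $v_h=\Pi^{0,e}_{k-1}v_h+(I-\Pi^{0,e}_{k-1})v_h$: the projected part is controlled as before, while for the remainder, orthogonality reduces the pairing to $\langle(I-\Pi^{0,e}_{k-1})(\nabla u\cdot\nn^E),(I-\Pi^{0,e}_{k-1})v_h\rangle_e$. The key estimate here is $\|(I-\Pi^{0,e}_{k-1})v_h\|_{0,e}\lesssim h^{1/2}|v_h|_{1,E}$, obtained by comparing with the volume projection $\Pi^{0,E}_{k-1}v_h$ (whose restriction lies in $\Pk_{k-1}(e)$) and applying the trace and approximation bounds of Lemma~\ref{lm:bramble}; paired with $\|(I-\Pi^{0,e}_{k-1})(\nabla u\cdot\nn^E)\|_{0,e}\lesssim h^{\reg-\frac12}|u|_{\reg+1,E}$ from Lemma~\ref{lm:bh-edge}, this gives $\epsilon\cdot h^{\reg-\frac12}\cdot h^{\frac12}|u|_{\reg+1,E}|v_h|_{1,E}=\epsilon^{1/2}h^{\reg}|u|_{\reg+1,E}\cdot\epsilon^{1/2}|v_h|_{1,E}$, again of the desired $\epsilon^{1/2}h^{\reg}$ type and, crucially, \emph{without} any assumption relating $\epsilon$ and $h$.

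Finally I would collect the four local bounds, so that $\eNa,\eNb,\eNc\lesssim\epsilon^{1/2}h^{\reg}|u|_{\reg+1,E}\|v_h\|_{\cip,E}$ and $\eNd\lesssim h^{\reg+\frac12}|u|_{\reg+1,E}\|v_h\|_{\cip,E}$, sum over $E\in\Omega_h$ using the discrete Cauchy--Schwarz inequality, and take the supremum over $v_h$. Since under assumption \textbf{(A1)} each element enters only a uniformly bounded number of times, this yields~\eqref{eq:errn}.
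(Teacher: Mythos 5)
Your proposal is correct and follows essentially the same route as the paper: the same four-term decomposition of $\tilde{\mathcal{N}}_h^E(u,\cdot)-\mathcal{N}_h^E(\uint,\cdot)$, with each piece matched against the corresponding component of $\|v_h\|_{\cip,E}$ via Cauchy--Schwarz, trace inequalities, and Lemmas~\ref{lm:bramble}, \ref{lm:bh-edge} and~\ref{lm:interpolation}. The only (harmless) variation is in the first term, where you split $\nabla u-\PZ0P\nabla\uint$ into polynomial and non-polynomial parts and use orthogonality, whereas the paper bounds it directly and then decomposes $v_h$ on the facet as $\Pi^{0,e}_0 v_h$ plus a remainder controlled by $h^{1/2}|v_h|_{1,E}$ --- the same underlying mechanism.
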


\begin{proof}
We have
\begin{equation*}
\begin{aligned}
\errN 
= &\sup_{v_h \in V_h^{k,nc}(\Omega_h)}  \Big[ \frac{1}{\| v_h \|_{\cip}} \sum_{E \in \Omega_h}
\Big(
\sum_{e \subset \partial E\cap\Gamma} \epsilon \langle \PZ0P \nabla  \uint \cdot \nn^{E} - \nabla u \cdot \nn^{E} , v_h \rangle_{e}\\
&\quad+ 
\sum_{e \subset \partial E\cap\Gamma} \epsilon \langle \uint - u , \PZ0P \nabla  v_h \cdot \nn^{E} \rangle_{e}  \\ 
&\quad 
+ \dfrac{\epsilon}{\delta h_E}\sum_{e \subset \partial E\cap\Gamma}    \langle  u - \uint, \Pi^{0,e}_{k-1} v_h \rangle_{e} + \sum_{e \subset \partial E\cap\Gamma_{\text{in}}}
\langle | \bb \cdot \nn^{E} | ( u - \P0 \uint ), \P0 v_h\rangle_{e}\Big)\Big]\, .
\end{aligned}
\end{equation*}
We consider an element $E \in \Omega_h$.
Thus, we need to estimate four different terms:
\begin{equation}
\begin{aligned}
\sum_{e \subset \partial E \cap \Gamma}\epsilon \langle & \PZ0P \nabla  \uint \cdot \nn^E - \nabla u \cdot \nn^E , v_h \rangle_{e}
+ 
\sum_{e \subset \partial E \cap \Gamma} \epsilon \langle \uint - u , \PZ0P \nabla  v_h \cdot \nn^{E} \rangle_{e}  \\ 
&\quad 
+ \dfrac{\epsilon}{\delta h_E}  \sum_{e \subset \partial E\cap\Gamma} \langle  u - \uint, \Pi^{0,e}_{k-1} v_h \rangle_{e} +
\sum_{e \subset \partial E \cap \Gamma_{\text{in}}}\langle | \bb \cdot \nn^{E} | ( u - \P0 \uint ), \P0 v_h\rangle_{e}\\
&= T_{N,1}^E + T_{N,2}^E+ T_{N,3}^E + T_{N,4}^E \, .
\end{aligned}
\end{equation}
For $T_{N,1}^E$, we have
\begin{equation}\label{eq:TN1E.0}
\begin{aligned}
T_{N,1}^E
& = 
\sum_{e \subset \partial E \cap \Gamma} \epsilon \langle   
\PZ0P \nabla  u_{\mathcal{I}} \cdot \nn^E-\nabla u \cdot \nn^E,
v_h \rangle_{e} \\
& \le 
 \sum_{e \subset \partial E \cap \Gamma} \epsilon\| \nabla u - \PZ0P \nabla  u_{\mathcal{I}}  \|_{0,e} \| v_h \|_{0,e}\\
& \lesssim 
\epsilon h^{-\frac{1}{2}} \left(   \| \nabla u -  \PZ0P \nabla  u_{\mathcal{I}}\|_{0,E} + h\, | \nabla u -  \PZ0P \nabla  u_{\mathcal{I}} |_{1,E}    \right)
\sum_{e \subset \partial E \cap \Gamma}\| v_h \|_{0,e} \, .
\end{aligned}
\end{equation}
Since it holds (cf. Lemma \ref{lm:bramble} and Lemma~\ref{lm:interpolation}): 
\begin{equation*}\label{eq:TN1E.1}
\begin{aligned}
\| \nabla u -  \PZ0P \nabla  u_{\mathcal{I}}\|_{0,E} & \leq 
\| \nabla u -  \PZ0P \nabla  u \|_{0,E} +
\| \PZ0P (\nabla  u -  \nabla  u_{\mathcal{I}})\|_{0,E}\\
& \lesssim h^\reg \vert u \vert_{\reg +1,E}\, ,
\end{aligned}
\end{equation*}
and 
\begin{equation*}\label{eq:TN1E.2}
\begin{aligned}
| \nabla u -  \PZ0P \nabla  u_{\mathcal{I}}|_{1,E} & \leq 
| \nabla u -  \PZ0P \nabla  u |_{1,E} +
| \PZ0P (\nabla  u -  \nabla  u_{\mathcal{I}})|_{1,E}\\
& \lesssim| \nabla u -  \PZ0P \nabla  u |_{1,E} +
h^{-1} \| \PZ0P (\nabla  u -  \nabla  u_{\mathcal{I}})\|_{0,E}\\
& \lesssim h^{\reg -1} \vert u \vert_{\reg +1,E} \, , 
\end{aligned}
\end{equation*}
we get
\begin{equation}\label{eq:TN1E}
\begin{aligned}
T_{N,1}^E
& \lesssim
\epsilon h^\reg \vert u \vert_{\reg +1,E} \, h^{-\frac{1}{2}}
\sum_{e \subset \partial E \cap \Gamma}\| v_h \|_{0,e} \\
&\lesssim  
\epsilon h^\reg \vert u \vert_{\reg +1,E} \, h^{-\frac{1}{2}}\sum_{e \subset \partial E \cap \Gamma}\bigl( \| v_h - \Pi^{0,e}_0 v_h \|_{0,e} + \| \Pi^{0,e}_0 v_h \|_{0,e}\bigr) \\
&\lesssim  
\epsilon h^\reg \vert u \vert_{\reg +1,E} \left( | v_h |_{1,E}
+  \sum_{e \subset \partial E \cap \Gamma} h^{-\frac{1}{2}} \| \Pi^{0,e}_{k-1} v_h \|_{0,e}\right)   \\
&\lesssim  
\epsilon^{\frac{1}{2}} h^\reg |u|_{\reg +1,E} \| v_h \|_{\cip,E}\, ,
\end{aligned}
\end{equation}
where, in the penultimate step, we have used the inequality established in the proof of Lemma~\ref{lm:bh-edge}.

For $T_{N,2}^E$, we get
\begin{equation}\label{eq:TN2E}
\begin{aligned}
T_{N,2}^E
& = \sum_{e \subset \partial E \cap \Gamma} \epsilon \langle \uint - u , \PZ0P \nabla  v_h \cdot \nn^{E} \rangle_{e} \\
& \le 
\sum_{e \subset \partial E \cap \Gamma} \epsilon \| u - \uint \|_{0,e}
\| \PZ0P \nabla  v_h  \|_{0,e} \\
& \lesssim
\sum_{e \subset \partial E \cap \Gamma} \epsilon \| u - \uint \|_{0,e}\, 
h^{-\frac{1}{2}}\| \PZ0P \nabla  v_h  \|_{0,E} \\
& \lesssim
\sum_{e \subset \partial E \cap \Gamma}\epsilon h^{-\frac{1}{2}} \| u - \uint \|_{0, e}
| v_h |_{1,E} \\
& \lesssim
\epsilon^{\frac{1}{2}} \left( | u - \uint |_{1,E} + h^{-1} \| u - \uint \|_{0,E} \right) \| v_h \|_{\cip,E} \\
& \lesssim
\epsilon^{\frac{1}{2}} h^\reg | u |_{\reg +1,E} \| v_h \|_{\cip,E} \, .
\end{aligned}
\end{equation}

For $T_{N,3}^E$, we have
\begin{equation}\label{eq:TN3E}
\begin{aligned}
T_{N,3}^E
&= 
\dfrac{\epsilon}{\delta h_E}  \sum_{e \subset \partial E\cap\Gamma} \langle  u - \uint, \Pi^{0,e}_{k-1} v_h \rangle_{e} \\
& \le 
\left(  \dfrac{\epsilon}{\delta h_E}  \sum_{e \subset \partial E\cap\Gamma} \|  u - \uint  \|_{0,e}^2  \right)^{\frac{1}{2}}  \left(  \dfrac{\epsilon}{\delta h_E}  \sum_{e \subset \partial E\cap\Gamma} \|  \Pi^{0,e}_{k-1} v_h \|_{0,e}^2  \right)^{\frac{1}{2}} \\
& \lesssim  \left(  \dfrac{\epsilon}{\delta h_E}  \sum_{e \subset \partial E\cap\Gamma} \|  u - \uint  \|_{0,e}^2  \right)^{\frac{1}{2}}  \| v_h \|_{\cip, E} \\
& \lesssim   \dfrac{\epsilon^{\frac{1}{2}}}{\delta^{\frac{1}{2}} }  \left(  ( |  u - \uint  |_{1,E}^2 + h^{-2}\|  u - \uint  \|_{0,E}^2 ) \right)^{\frac{1}{2}}   \| v_h \|_{\cip, E} \\
& \lesssim
\epsilon^{\frac{1}{2}} h^\reg | u |_{\reg +1,E} \| v_h \|_{\cip, E}.
\end{aligned}
\end{equation}

Finally, for $T_{N,4}^E$, we get
\begin{equation}\label{eq:TN4E}
\begin{aligned}
T_{N,4}^E
&= 
\sum_{e \subset \partial E \cap \Gamma_{\text{in}}} \langle | \bb \cdot \nn^{E} | ( u - \P0 \uint ), \P0 v_h\rangle_e \\
& \lesssim
\sum_{e \subset \partial E \cap \Gamma_{\text{in}}} \| u - \P0 \uint \|_{0,e} \, \| | \bb \cdot \nn^{E}|^{\frac{1}{2}} \P0 v_h \|_{0,e} \\
&\lesssim
\sum_{e \subset \partial E \cap \Gamma_{\text{in}}} \bigl( \| u - \P0 u \|_{0,e} + \| \P0 (u -  \uint ) \|_{e} \bigr) \, \| v_h \|_{\cip,E}\\
&\lesssim
\left( h^{-\frac{1}{2}}\| u - \P0 u \|_{0,E} + h^{\frac{1}{2}} | u - \P0 u |_{1,E} 
+ h^{-\frac{1}{2}} \| \P0 (u -  \uint )\|_{0,E} \right) \, \| v_h \|_{\cip,E}\\
&\lesssim
\left( h^{-\frac{1}{2}}\| u - \P0 u \|_{0,E} + h^{\frac{1}{2}} | u - \P0 u |_{1,E} 
+ h^{-\frac{1}{2}} \| u -  \uint \|_{0,E} \right) \, \| v_h \|_{\cip,E}\\
&\lesssim
h^{\reg + \frac{1}{2}} | u |_{\reg +1,E} \, \| v_h \|_{\cip,E} \, .
\end{aligned}
\end{equation}
Estimate \eqref{eq:errn} now follows by considering estimates \eqref{eq:TN1E}-\eqref{eq:TN4E} and summing all the local contributions.
\end{proof}

Combining Lemmas~\ref{lm:est_eI}--\ref{lm:errN} with Proposition \ref{prp:abstract}, we obtain the following result.

\begin{theorem}
\label{th:bfb}
Let $u$ be the solution of problem~\eqref{eq:weak-2} and $u_h \in V_h^{k,nc}(\Omega_h)$ be the solution of the discrete problem~\eqref{eq:cip-vem}. 
Under assumptions~\textbf{(A1)} and \textbf{(A2)},
it holds true that
\begin{equation}\label{eq:errest}
\begin{aligned}
\|u - u_h\|_{\cip} \lesssim
\left(
\epsilon^\frac{1}{2} \, h^{\reg} 
+   
h^{\reg+\frac{1}{2}} 
\right) \, 
\Big(\sum_{E \in \Omega_h} \Theta_E^2 \Big)^{1/2} ,
\end{aligned}
\end{equation}

with constants $\Theta_E$ depending on
$|u|_{\reg +1,E}$, $|f |_{\reg +\frac{1}{2},E}$, and $\|\beta\|_{[W^{\reg +1, \infty}(E)]^2}$, but independent of~$h$ and~$\varepsilon$.
\end{theorem}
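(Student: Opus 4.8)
The statement is the synthesis of all the term-by-term bounds obtained so far, so the plan is to feed the estimates of Lemmas~\ref{lm:est_eI}--\ref{lm:errN} into the abstract decomposition of Proposition~\ref{prp:abstract}. First I would invoke~\eqref{eq:abstract}, which reduces $\|u-u_h\|_{\cip}$ to the nine summands $\|\eint\|_{\cip}$, $\errF$, $\errBt$, $\erra$, $\errb$, $\errc$, $\eta_d$, $\errN$, and $\errJ$. The interpolation term is handled by taking the square root of~\eqref{eq:eI} and using $\sqrt{a+b}\le\sqrt a+\sqrt b$, which turns Lemma~\ref{lm:est_eI} into
\[
\|\eint\|_{\cip}\lesssim\bigl(\epsilon^{\frac12}h^{\reg}+h^{\reg+\frac12}\bigr)\Bigl(\sum_{E\in\Omega_h}|u|^2_{\reg+1,E}\Bigr)^{1/2},
\]
while the other eight summands are controlled directly by Lemmas~\ref{lm:errB}, \ref{lm:SameAsConf}, \ref{lm:errb}, \ref{lm:errd}, and~\ref{lm:errN}.

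The second step is to observe that every one of these nine bounds has the same shape: a scalar prefactor that is a product of powers of $h$ and $\epsilon$, multiplied by the square root of a sum of local (semi)norms of $u$ and $f$, occasionally weighted by a Sobolev norm of $\bb$ and occasionally extended from the single element $E$ to its patch $\mathcal{D}(E)$ (this happens for $\erra$, $\eta_d$, and one of the contributions to $\errN$). I would normalize the $\bb$-dependence by recalling the scaling $\|\bb\|_{[L^\infty(\Omega)]^d}=1$ together with the monotonicity $\|\bb\|_{[W^{\reg}_\infty(E)]^d}\le\|\bb\|_{[W^{\reg+1,\infty}(E)]^d}$, so that each occurrence of a $\bb$-factor is absorbed into a single local constant. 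The patchwise sums are then reduced to element sums by means of the finite-overlap property guaranteed by assumption~\textbf{(A1)}: each element lies in a uniformly bounded number of patches, so that $\sum_{E}|u|^2_{\reg+1,\mathcal{D}(E)}\lesssim\sum_E|u|^2_{\reg+1,E}$, exactly as already used after~\eqref{eq:almost-done}.

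Finally I would gather the prefactors. Setting
\[
\Theta_E^2:=\bigl(1+\|\bb\|^2_{[W^{\reg+1,\infty}(E)]^d}\bigr)\,|u|^2_{\reg+1,E}+|f|^2_{\reg+\frac12,E},
\]
each of the nine contributions takes the form $\Lambda\,\bigl(\sum_E\Theta_E^2\bigr)^{1/2}$ for a scalar $\Lambda$ built from powers of $h$ and $\epsilon$; collecting these prefactors and retaining the governing scales produces the envelope $\epsilon^{\frac12}h^{\reg}+h^{\reg+\frac12}$ multiplying $(\sum_E\Theta_E^2)^{1/2}$, which is~\eqref{eq:errest}. The step that requires the most care is precisely this last bookkeeping: one must track which summands carry the diffusion weight $\epsilon^{\frac12}$ and which carry the extra boundary/jump factor $h^{\frac12}$, discard the strictly higher-order contributions (such as the $h^{\reg+1}$ factor in $\errc$) via $h\lesssim 1$, and — most delicately — verify that the more crudely estimated advection terms $\errb$ and $\eta_d$, whose prefactor in Lemmas~\ref{lm:errb} and~\ref{lm:errd} is only $h^{\reg}$, do not degrade the claimed rate. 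Once the suprema defining the dual norms $\|\cdot\|_{\cipdual}$ are resolved, a Cauchy--Schwarz step over the element index converts each $\sum_E(\text{coeff})_E\|v_h\|_{\cip,E}$ into $(\sum_E(\text{coeff})_E^2)^{1/2}\|v_h\|_{\cip}$, and one must check that the resulting constants $\Theta_E$ remain genuinely local and independent of both $h$ and $\epsilon$.
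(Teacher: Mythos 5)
Your proposal follows exactly the route the paper takes: the paper gives no explicit proof of Theorem~\ref{th:bfb} beyond the sentence ``Combining Lemmas~\ref{lm:est_eI}--\ref{lm:errN} with Proposition~\ref{prp:abstract}, we obtain the following result,'' and your plan (plug the nine term-by-term bounds into \eqref{eq:abstract}, absorb the $\bb$-weights into $\Theta_E$, reduce patch sums to element sums by finite overlap, collect prefactors) is precisely that synthesis, spelled out.

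The one point you flag as ``most delicate'' --- whether $\errb$ and $\eta_d$ degrade the rate --- is in fact a genuine unresolved issue, and you should not leave it as a remark to ``verify.'' As stated, Lemma~\ref{lm:errb} and Lemma~\ref{lm:errd} give prefactor $h^{\reg}$ with neither an $\epsilon^{1/2}$ nor an extra $h^{1/2}$, and $h^{\reg}\not\lesssim \epsilon^{1/2}h^{\reg}+h^{\reg+1/2}$ when both $h$ and $\epsilon$ are small; so these two contributions do not fit under the claimed envelope in \eqref{eq:errest} without further work. The standard fix (used in the conforming analysis of the reference the paper builds on) is to integrate the troublesome terms by parts so as to trade $|u-\uint|_{1,E}$ for $h^{-1}\|u-\uint\|_{0,E}$ tested against the $h^{1/2}\|\bb\cdot\nabla\Pi^{0,E}_k v_h\|_{0,E}$ piece of the CIP norm, which recovers the $h^{\reg+1/2}$ rate. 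The paper itself glosses over this, so your proposal is no weaker than the published argument, but a complete proof of \eqref{eq:errest} requires either sharpening those two lemmas or weakening the stated rate to $h^{\reg}$.
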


\section{Numerical results}\label{s:numerical}

In this section we numerically test our method in two space dimensions, by considering two model problems in the domain $\Omega = [0,1] \times [0,1]$.
We use two different families of meshes:
\begin{itemize}
\item \texttt{octag:} meshes obtained by perturbing structured triangular meshes: each hypotenuse is split into two edges, then all nodes are perturbed, finally one extra node (the midpoint) is introduced on each edge; 
the elements of the obtained meshes are octagons;
\item \texttt{voro:} Voronoi meshes. 
\end{itemize}
\begin{figure}[!htb]
\begin{center}
\begin{tabular}{ccc}
\texttt{octag} & &\texttt{voro} \\
\includegraphics[width=0.35\textwidth]{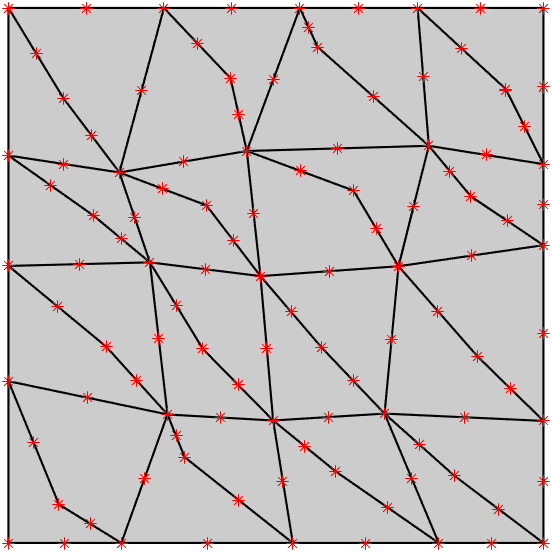} &\phantom{mm}&
\includegraphics[width=0.35\textwidth]{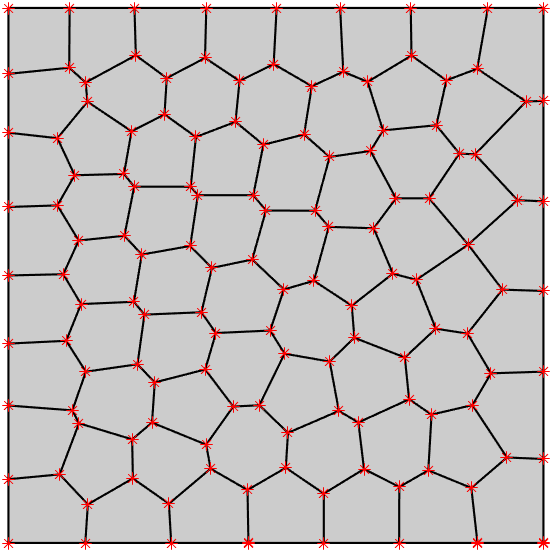} \\
\end{tabular}
\end{center}
\caption{Example of meshes used for the tests.}
\label{fig:meshes}
\end{figure}
The analytic expression of the VEM solution $u_h$ is unknown and we cannot compute the difference $u - u_h$ in closed form. Therefore, we consider the following quantities:
\begin{itemize}
\item \textbf{$H^1-$seminorm error}
$$
e_{H^1} := \sqrt{\sum_{E\in\Omega_h}\left\|\nabla(u-\PN u_h)\right\|^2_{0,E}}\,;
$$
\item \textbf{$L^2-$norm error}
$$
e_{L^2} := \sqrt{\sum_{E\in\Omega_h}\left\|(u-\P0 u_h)\right\|^2_{0,E}}\,.
$$
\end{itemize}
We also consider the error in the CIP-norm defined in \eqref{eq:glb_norm_def}:
$$
\begin{aligned}
\| u - u_h \|^2_{\cip} 
& \approx 
\sum_{E\in\Omega_h} \epsilon \left\|\nabla(u-\PN u_h)\right\|^2_{0,E}
+
h \sum_{E\in\Omega_h}\left\| \bb \cdot \nabla \P0 ( u - u_h) \right\|^2_{0,E}
\\
&+ \sum_{E\in\Omega_h} \sigma \left\|(u-\P0 u_h)\right\|^2_{0,E}
+
\dfrac{\epsilon}{\delta h}\sum_{e \in \mathcal{E}_h^\partial} \left\|  \Pi^{0,e}_{k-1} (u - u_h) \right\|^2_{0,e}
 \\
&+
\sum_{e \in \mathcal{E}_h^\partial,\ e\subset \Gamma_{\text{in}}} \left\| | \bb \cdot \nn^{E} |^{\frac12} \Pi^{0,E}_k (u - u_h) \right\|^2_{0,e}
+J(u - u_h, u - u_h).
\end{aligned}
$$

We assume that the analytic solutions of problem \eqref{eq:strong} are the functions
\[
u_1(x,y) = \dfrac{1}{2} \left(1 - \tanh \left(\dfrac{x - 0.5}{0.05} \right)\right) \qquad \mbox{(first test)}\, ,
\]
\[
u_2(x,y) = (y - y^2) \left( x - \dfrac{e^{\frac{x-1}{0.05}} - e^{\frac{-1}{0.05}}}{1 - e^{\frac{-1}{0.05}}} \right) \qquad \mbox{(second test)}\, .
\]
The first solution exhibit an internal layer in the middle of the domain. The second solution vanishes along the whole boundary and has a boundary layer at $x=1$.
For both tests, the parameter $\sigma$ is set to $1$, while the convective coefficient is 
\[
\bb(x,\,y) := \left[\begin{array}{r}
-2\,\pi\,\sin(\pi\,(x+2\,y))\\
\pi\,\sin(\pi\,(x+2\,y))
\end{array}\right]\,.
\]
The Nitsche parameter is selected as $\delta = 0.1$, while for the parameters $\gamma_e$ and $\gamma_E$, see \eqref{eq:gamma-loc}, we set $\kappa_e = \kappa_E = 0.025$.

\begin{remark}
The choice $\kappa_e = \kappa_E = 0.025$ appears in the numerical results of \cite{burman:2004}.
We also remark that in \cite{burman:2007} it has been shown that these parameters should be scaled as $k^{-\frac{7}{2}}$, where $k$ is the order of the method.
This highlights that the larger the discrete space, the less CIP stabilization is needed.
\end{remark}

\paragraph{Convergence rates of the schemes.}
We investigate the convergence rates of the approximation error, choosing $\epsilon = 10^{-5}$ (hence the problem is in the advection-dominated regime). We consider both the mesh families described above.

\begin{figure}
\begin{center}
\begin{tabular}{ccccc}
\includegraphics[width=0.43\textwidth]{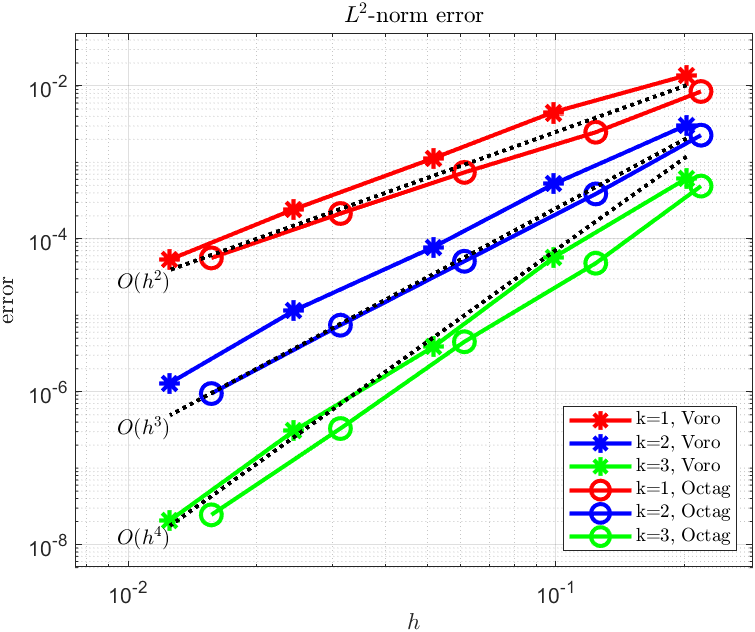}
&\phantom{mm}&
\includegraphics[width=0.43\textwidth]{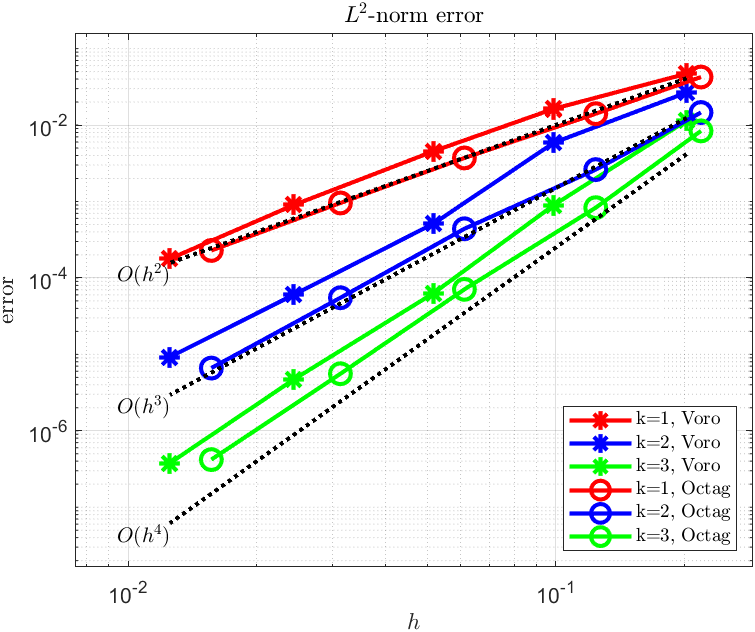} 
\end{tabular}
\begin{tabular}{ccccc}
\includegraphics[width=0.43\textwidth]{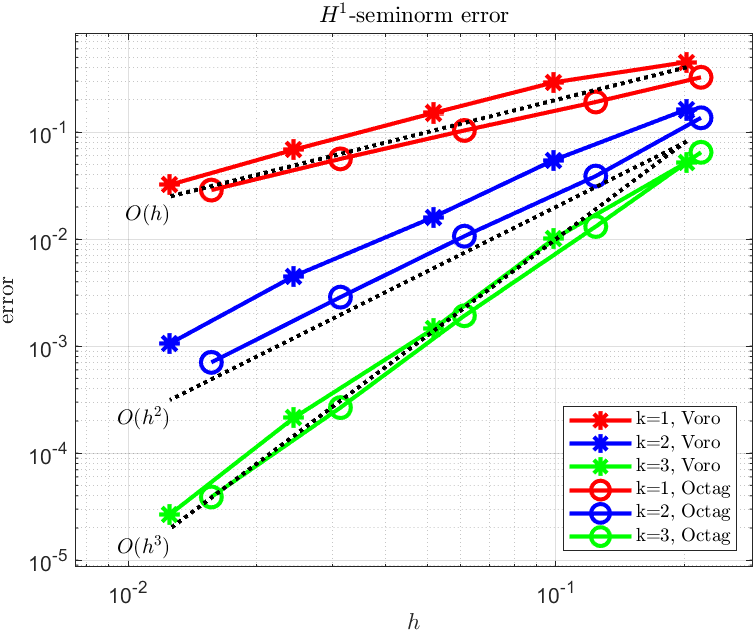}
&\phantom{mm}&
\includegraphics[width=0.43\textwidth]{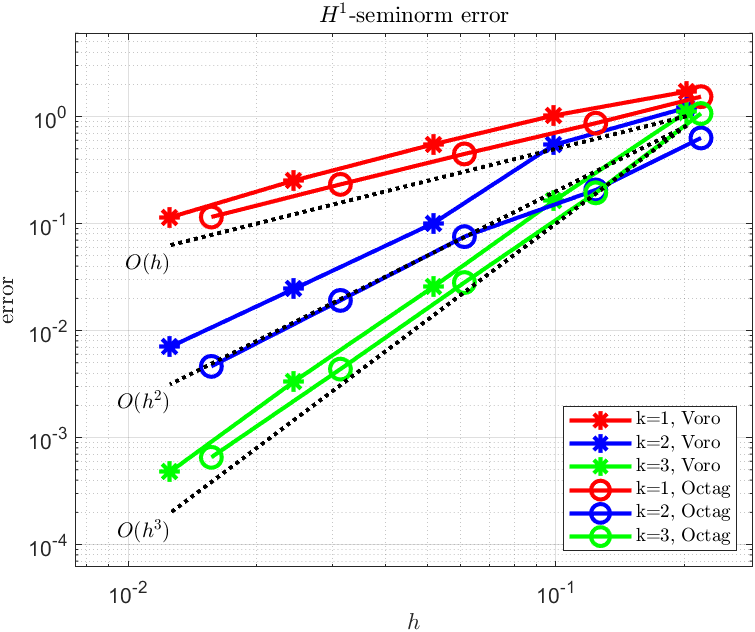} 
\end{tabular}
\begin{tabular}{ccccc}
\includegraphics[width=0.43\textwidth]{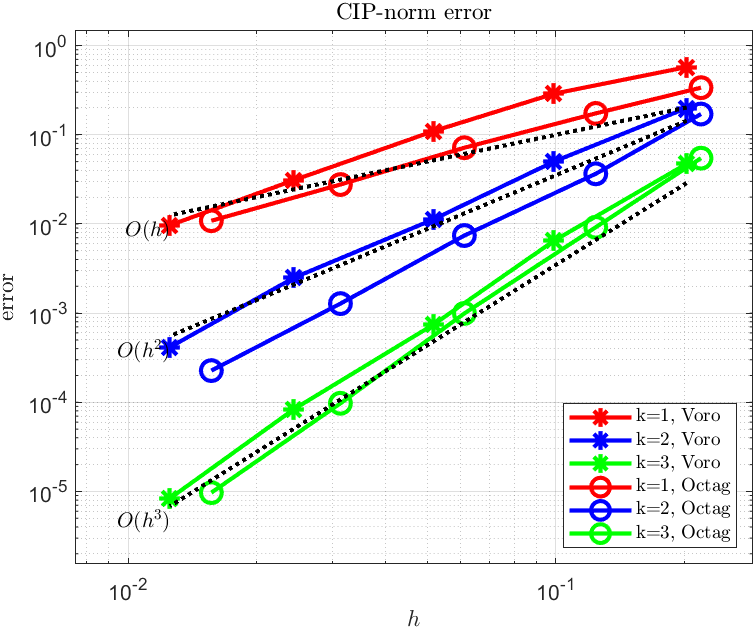}
&\phantom{mm}&
\includegraphics[width=0.43\textwidth]{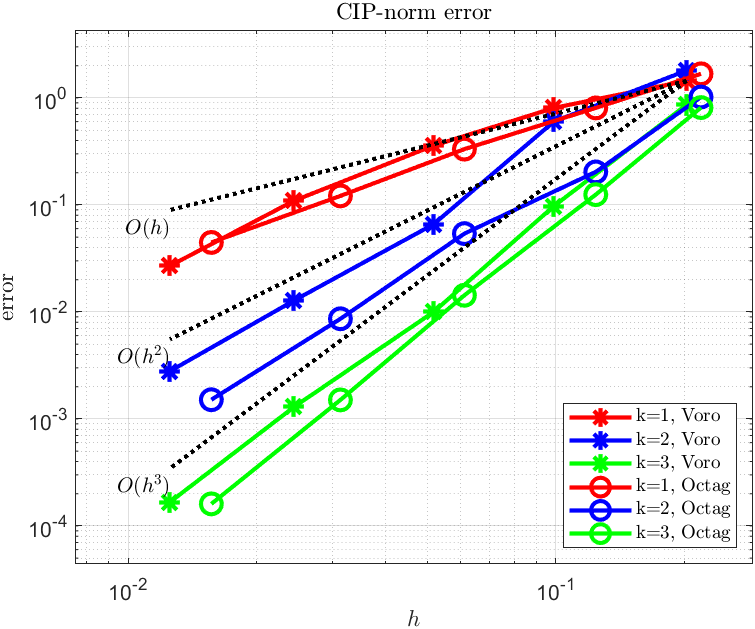} 
\end{tabular}
\end{center}
\caption{Convergences for $u_1$ (left column) and $u_2$(right column).} 
\label{fig:convergence}
\end{figure}

In Figure \ref{fig:convergence}, we observe the results for $u_1$ and $u_2$,
when the method orders are $k=1,2,3$. 
Optimal rates of convergence in the $L^2-$norm and $H^1-$seminorm can be appreciated. 
For the CIP-norm and $k=1$, we observe a super-linear convergence rate. This behavior can be explained by considering the error estimate of Theorem \ref{th:bfb}: most likely, for such errors and small $\epsilon$, the dominating part is the second one in the right-hand side of \eqref{eq:errest}, which correspond to a convergence rate of order $\frac{3}{2}$.

\paragraph{Robustness with respect to the parameter $\epsilon$.}
We now numerically assess the robustness of the method with respect to the advection parameter $\epsilon$. For this purpose, we test the method on a Voronoi tessellation with 1024 polygons.
\begin{figure}
\begin{center}
\begin{tabular}{ccccc}
\includegraphics[width=0.43\textwidth]{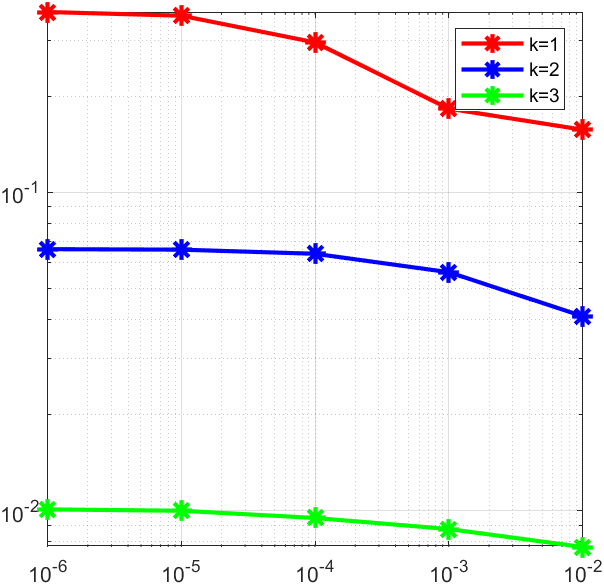}
&\phantom{mm}&
\includegraphics[width=0.43\textwidth]{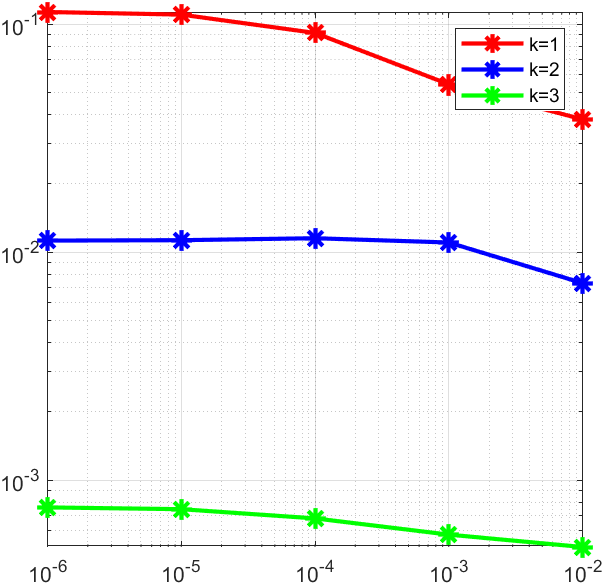} 
\end{tabular}
\end{center}
\caption{Results for various choice of $\epsilon$.} 
\label{fig:epsilon}
\end{figure}
As expected, in Figure \ref{fig:epsilon}, we observe that the CIP norm of the error is almost constant. Left column is related to $u_1$, right column is related to $u_2$. Similar result not reported here have been obtained using other meshes.

\section*{Funding}
\noindent
This research was funded in part by:\\
- the Austrian Science Fund (FWF) projects 10.55776/F65 and 10.55776/P33477 (IP);\\
- INdAM-GNCS (CL and MT), and the Italian MUR through the PRIN 2020 grant ``Advanced polyhedral discretisations of heterogeneous PDEs for multiphysics problems'' (CL).

\addcontentsline{toc}{section}{\refname}
\bibliographystyle{plain}
\bibliography{biblio}

\end{document}